\newtheorem{remark}{Remark}
\newtheorem{proposition}{Proposition}[section]
\newtheorem{corollary}{Corollary}[section]
\newcommand{\mycomment}[1]{ }
\newcommand\beq{\begin{equation}}
\newcommand\eeq{\end{equation}}
\renewcommand{\to}{\rightarrow}
\renewcommand{\div}{\operatorname{div}}
\newcommand{\tr}{\operatorname{tr}}
\newcommand{\grad}{\boldsymbol{\nabla}}
\newcommand{\ba}{\boldsymbol{a}} 
\newcommand{\bA}{\boldsymbol{A}} 
\newcommand{\bc}{\boldsymbol{c}}
\def\bD{\boldsymbol{D}}
\newcommand{\be}{\boldsymbol{e}}
\newcommand{\f}{\boldsymbol{f}}
\newcommand{\bF}{\boldsymbol{F}}
\newcommand{\bI}{\boldsymbol{I}} 
\newcommand{\bK}{\boldsymbol{K}}
\def\bn{\boldsymbol n}
\def\bR{\boldsymbol R}
\def\bS{\boldsymbol S}
\def\bt{\boldsymbol t}
\newcommand{\bu}{\boldsymbol{u}}
\newcommand{\bU}{\boldsymbol{U}}
\newcommand{\bv}{\boldsymbol{v}}
\newcommand{\bx}{\boldsymbol{x}} 
\newcommand{\bY}{\boldsymbol{Y}}
\newcommand{\bZ}{\boldsymbol{Z}}
\newcommand{\bphi}{\boldsymbol{\phi}}
\newcommand{\bPhi}{\boldsymbol{\Phi}}
\newcommand{\bsigma}{\boldsymbol{\sigma}}
\newcommand{\bSigma}{\boldsymbol{\Sigma}}
\newcommand{\btau}{\boldsymbol{\tau}}
\def\Acal{\mathcal{A}}
\def\Bcal{\mathcal{B}}
\def\Dcal{\mathcal{D}} 			
\def\Fcal{\mathcal{F}}
\def\Gcal{\mathcal{G}}
\def\Hcal{\mathcal{H}}
\def\Pcal{\mathcal{P}}
\def\Vcal{\mathcal{V}}
\newcommand{\EE}{\mathbb{E}}
\newcommand{\RR}{\mathbb{R}} \newcommand{\R}{\mathbb{R}} 
\title{ 
 Viscoelastic flows of Maxwell fluids \\
 with conservation laws 
}
\author{
S\'ebastien Boyaval, Laboratoire d'hydraulique Saint-Venant \\
Ecole des Ponts 
-- EDF R\&D -- CEREMA \\ 
\& MATHERIALS, Inria Paris (sebastien.boyaval@enpc.fr) \\
EDF'lab 6 quai Watier 78401 Chatou Cedex France
}
\begin{document}

\maketitle


\begin{abstract}
We consider 
multi-dimensional extensions of Maxwell's 
seminal 
rheological equation for 
1D 
viscoelastic 
flows. 
%
%
We aim at a causal model 
for compressible 
flows, defined by semi-group solutions 
given initial conditions,
and such that 
perturbations 
propagates at finite speed.

We propose 
a \emph{symmetric hyperbolic} system of \emph{conservation laws} 
that contains the \emph{Upper-Convected Maxwell (UCM)} equation as causal model.
The 
system is an extension of 
polyconvex 
elastodynamics, 
with an additional \emph{material metric} variable 
that relaxes to model viscous effects. 
%
%
%
Interestingly, the framework could also cover other rheological equations, 
depending on the chosen relaxation limit for the material metric variable. 
%

%
We propose to apply the new system to incompressible free-surface gravity flows in the shallow-water regime, 
when causality 
is important. 
The system reduces to a viscoelastic extension of Saint-Venant 2D shallow-water 
system 
that is symmetric-hyperbolic and that encompasses our previous 
viscoelastic extensions of Saint-Venant proposed 
with F.~Bouchut. 
\end{abstract}


\section{Introduction} 
\label{sec:intro}

In 1867, 
when 
viscosity 
was already 
an important 
concept to model 
friction 
within 
fluid flows 
at the human scale 
following Poisson's theory of 
friction \cite{poisson-1831},
Maxwell introduced a seminal 
relaxation equation for the rheology of one-dimensional (1D) 
flows where viscosity 
is defined 
from 
elasticity and a characteristic time
\cite{Maxwell01011867}.
The \emph{viscoelastic} model of Maxwell is long known as an interesting model for 1D flows: 
given initial conditions, fluid motions are well-defined 
\cite{joseph-renardy-saut-1985} that are genuinely 
causal, i.e. causal and local in particular.

\smallskip 

By contrast, nowadays, viscosity is often 
introduced in 
continuum mechanics as a material parameter
into the momentum balance of motions described in spatial 
coordinates \cite{Coleman-Noll1963}. 
It still allows to define 
\emph{causal} viscous flows as 
semi-group solutions to Cauchy problems. 
However, it uses \emph{diffusive} Partial Differential Equations (PDEs) 
like the celebrated Navier-Stokes equations \cite{lions-1996},
and the latter viscous flows 
do not satisfy the 
desirable 
principle of \emph{locality} 
(i.e. 
motions are not genuinely causal)
because information 
propagates at \emph{infinite} speed. 
%
Now, locality is important 
in geophysics 
e.g. when unstationary 
processes associated with internal friction 
obviously 
have a local character (the migration of suspended particles, the production of turbulent energy\dots).

\smallskip 

In this work, to model viscosity in 
fluid flows, 
we follow Maxwell's approach and we look for a good 
(hyperbolic) viscoelastic model. 

\smallskip 

Many viscoelastic 
models have 
been proposed 
after Maxwell, 
in particular to explain 
non-Newtonian 
flows 
of polymeric rubber-like liquids after \cite{dewitt-1955} 
that are mostly steady. 
For multi-dimensional flows, there is now a consensus about the need for a rheological equation 
with 
\emph{objective derivatives}, like the famous \emph{Upper-Convected Maxwell} (UCM) equation.
%
%
But flow models with UCM are usually formulated as 
quasilinear 
systems 
without 
more 
structure; 
and solutions to Cauchy problems have 
remained difficult to analyze 
or simulate 
beyond 1D. 
In practice, the 
UCM models -- mostly used 
for incompressible flows -- are often modified with an additional ``background viscosity'' (equiv. a 
retardation time) e.g. as in the Oldroyd-B model, which 
spoils the local character of Maxwell's  model. 
%
See \textbf{Section~\ref{sec:continuum}} for 
more details about 
standard viscoelastic 
models.

\smallskip 

In this work, we propose 
the first formulation of the 
compressible 
UCM model as a \emph{symmetric-hyperbolic system of conservation laws} 
in \textbf{Section~\ref{sec:model}}.

\smallskip 

Starting with the 
elastodynamics system 
like the K-BKZ theory for viscoelastic 
models \cite{kaye-1962,kaye-1963,bernstein-kearsley-zapas-1963,BKZ1964},
a new system of physically-meaningful conservation laws is proposed 
for the compressible UCM model in section~\ref{sec:conservation}.

In section~\ref{sec:strictlyconvex}, it is then proved that the system is symmetric-hyperbolic, 
using conservative variables adequate for the application of 
Godunov-Mock theorem.
Recall that 
symmetric-hyperbolic systems of conservation laws 
are essential 
to the analysis 
and to the numerical simulation of 
solutions to quasilinear systems \cite{benzonigavage-serre-2007},
and to polyconvex elastodynamics in particular \cite{dafermos-2000,wagner-2009,bonet-gil-ortigosa-2015}. 

The new system is not simply a sound mathematical 
framework for the 
viscoelastic models 
under development \cite{mackay-philips-2019}. 
It is also one particular \emph{viscoelastic} case 
in \emph{a class of 
mathematically-sound models that unifies the hyperelastic solids with viscous fluids}.
%

In section~\ref{sec:physics}, we show that the new system 
has not only a physical interpretation 
as one extension of the polyconvex elastodynamics system (usually modelling \emph{solids}),
but also \emph{one particular extension towards fluids},  
that uses 
an additional 
material metric variable like other well-known extensions (e.g. the elastoplastic systems). 
That latter interpretation shows the potentialities of the new symmetric-hyperbolic system of conservation laws,
to soundly unify the solid and fluid dynamics of various materials. 
%

Unifying fluid and solid dynamics 
has of course been the goal of many previous works in the literature, and it is not the aim of the present work to review and compare them with our new system.
Here, unification is simply mentioned as 
a potentiality of our new system.
Let us nevertheless mention the recent work \cite{PESHKOV2019481}.
As for unification, that work is the only one we are aware of which, like ours, 
first looks for \emph{a symmetric-hyperbolic system of conservation laws} extending polyconvex elastodynamics to viscoelastic Maxwell fluids.
In comparison with \cite{PESHKOV2019481}, we extend polyconvex elastodynamics 
to a \emph{hyperbolic quasilinear system with a different structure}, using a \emph{different additional variable}.

\smallskip

Last, we believe our new system will have very useful applications in the shallow-water regime, 
to model free-surface 
gravity flows 
with viscosity. 

\smallskip

In \textbf{Section~\ref{sec:2D}}, we precisely show how our new system can be reduced to a symmetric-hyperbolic system of 2D conservation laws
that is a physically-meaningful viscoelastic extension of Saint-Venant models.
The 
new 2D system encompasses 
our former viscoelastic extensions of Saint-Venant models with F.~Bouchut \cite{bouchut-boyaval-2013,bouchut-boyaval-2015,boyaval-hal-01661269},
without a conservative formulation in 2D.
%

Developping 2D shallow-water models for free-surface flows with large vertical vortices and viscous dissipation
has also been the goal of many previous works in the literature, see \cite{ferrari-saleri-2004,bouchut-boyaval-2015}
and references therein.
Again, it is not the goal of the present article to review and compare those numerous 2D works with ours. 
Here, we simply mention an important application of our new 3D UCM system, which delivers a \emph{symmetric-hyperbolic} system of 2D conservation laws 
in contrast to \cite{gavrilyuk-ivanova-favrie-2018} and our former works \cite{bouchut-boyaval-2015,boyaval-hal-01661269} e.g., 
see details in Section~\ref{sec:2D}.

\mycomment{ \smallskip 

In Section~\ref{sec:continuum}, we mathematically set the mechanical framework
and we recall the standard 
compressible UCM model
for viscoelastic flows 
as it arises in continuum mechanics from particular 
constitutive assumptions.

In Section~\ref{sec:model}, we introduce our new formulation of the compressible UCM model with a system of conservation laws.
We prove that it is indeed symmetric-hyperbolic, using adequate conservative variables
and a variant of Lieb convexity theorem \cite{lieb-1973}.
We also discuss straightforward mathematical consequences (well-posedness of a Cauchy problem)
as well as some other properties.
In particular, the new formulation is interpreted physically in section~\ref{sec:physics},
which shows many potentialities. 

In Section~\ref{sec:2D}, we detail the application in computational geophysics which extends the 
Saint-Venant equations to causal viscoelastic 
flows.

} 

\section{Viscoelastic flows in continuum mechanics} 
\label{sec:continuum}

First 
recall 
\emph{standard} viscoelastic constitutive assumptions to model smooth 
compressible material 
fluid motions (equiv. flows) 
in 
continuum mechanics setting. 

\subsection{Continuum mechanics needs constitutive assumptions} 

Continuum mechanics aims at modelling the motions of ``matter'' 
as flows of ``continuous bodies'' at the human scale 
(unlike 
``discrete particles'' at the molecular scale). 
A prerequisite is the definition of material bodies 
and their flows.

The classical 
theory considers 
bodies $\Bcal$ 
that are 
Riemannian manifolds, 
and flows that are collections of
``configurations'' i.e. mappings $\bphi_t(\Bcal)$ indexed by 
time $t\in\RR$ 
into the Euclidean 
ambiant space \cite{marsden-hughes-2012-mathematical}. 
For future reference, recall that 
on 
bodies $\Bcal$ 
with a 
coordinate system $\{a^\alpha\}$ 
and 
a material (or body) metric defined by a positive symmetric 2-tensor $G_{\alpha\beta}\in S^+(\RR^{d\times d})$ ($d=2,3$),
$\div_{\ba}\bv = 
\partial_\alpha(\sqrt{|G|}v^{\alpha\beta\ldots})/\sqrt{|G|}$ for $\bv(\ba)=v^{\alpha\beta\ldots}\be_\alpha\otimes\be_\beta\dots$ 
is well-defined 
when $G_{\alpha\beta}\in S^{++}(\RR^{d\times d})$ 
i.e. the determinant is stricly positive $|G_{\alpha\beta}|>0$, 
and an inverse metric $G_{\alpha\beta}G^{\beta\gamma}=\delta_{\alpha\gamma}$ exists
-- $\delta_{\alpha\gamma}$ denoting Kronecker's symbol --. 

Next, one establishes 
a 
precise description of bodies motions i.e. ``flows'' using 
axioms  
and assumptions. 
Viscoelastic 
flows arise 
from particular \emph{constitutive assumptions}, 
see section~\ref{sec:viscoelastic}.
But let us first recall 
the continuum mechanics 
setting and simpler consitutive equations
(some 
notions need to be assumed though, like those in quotes ``\dots'',
and we refer to 
\cite{dafermos-2000,marsden-hughes-2012-mathematical,wang-truesdell-1973}
for 
more details). 

\smallskip

Given a force field $\f$ in the Euclidean ambiant space with a coordinate system $\{x^i\}$,
one assumes 
a Galilean frame-invariant balance of total 
energy $E\ge 0$ 
holds as follows for bodies, 
with $R$ the 
heat supplied during the process:
\begin{equation}
\label{eq:totalenergy_balance}
\partial_t (E\circ\bphi_t) = \div_{\ba}( S^{i\alpha}\partial_t\phi^i_t ) + \partial_t\phi^i_t (f^i\circ\bphi_t) + R \,.
\end{equation}
Then, 
bodies 
are 
characterized by a 
mass-density 
$\hat\rho(\ba)\ge0$, and their motions 
$\bphi_t:\ba\in\Bcal\to\bx=\bphi_t(\ba)\in\RR^d$ ($d=2,3$)
satisfy the \emph{momentum balance}: 
\begin{equation}
\label{eq:momentum_material}
\hat\rho (\partial^2_{tt} \bphi_t) = \div_{\ba} \bS 
+ \hat\rho (\f\circ\bphi_t)
\end{equation}
where $\bS$ is the 
(first) Piola-Kirchoff stress tensor, $S^{i\alpha}$ in coordinates.
For non-polar bodies, 
it holds $S^{i\alpha}\partial_{\alpha}\phi^j=S^{j\alpha}\partial_{\alpha}\phi^i$ 
and, on introducing $r\circ\bphi_t = R/\hat\rho$, 
\begin{equation}
\label{eq:firstlaw_material}
\hat\rho (\partial_t e\circ\bphi_t) - S^{i\alpha} \partial^2_{t\alpha} \phi^i_t 
= \hat\rho (r\circ\bphi_t)
\end{equation} 
where $ e\circ\bphi_t:= 
E\circ\bphi_t / \hat\rho - \frac12 | \partial_{t}\bphi_t |^2 $ is the 
internal 
energy. 
Note that 
we assume \emph{adiabatic processes} 
(i.e. no heat flux 
within bodies, which are assumed 
heat insulators), 
and we use Einstein summation convention for repeated indices. 

Next, if \emph{constitutive assumptions} specify $e$ as a function of $\partial_{\alpha} \phi^i_t$ -- thus also $\bS$ by \eqref{eq:firstlaw_material} --,
motions $\bphi_t$ can be 
defined as solutions to 
\eqref{eq:momentum_material} for 
$t\in[0,T)$ 
given 
$\phi_{t=0}=\phi_0$. 
Some constitutive assumptions 
and well-defined 
motions have shown the practical interest of the theory for 
applications 
to various materials, 
see e.g. \cite{leveque-2002,bonet-gil-ortigosa-2015}. 
But specifying constitutive assumptions that are both mathematically and physically meaningful 
is 
a difficult 
task since the beginning of the theory. 
Despite many rationalization efforts 
guided by mathematical soundness, 
we are not aware of a definitive approach 
to model particular real materials
(many practical
constitutive assumptions 
exist, scattered in a vast literature).
We recall standard constitutive assumptions 
for \emph{viscoelastic fluids} 
in section~\ref{sec:viscoelastic}. 

\smallskip 

In section~\ref{sec:constitutive}, we first recall 
fundamental 
constitutive assumptions for elastic and viscous material bodies in the ``solid'' or ``fluid'' states,
when $e$ is function of $\partial_\alpha\phi^i_t$ or $|\partial_\alpha\phi^i_t|$.
Viscoelasticity arises as a 
unifying concept in between. 
We consider \emph{smooth} motions 
$\bphi_t$, 
diffeomorphisms with inverse $\bphi_t^{-1}$, and we denote: 
\begin{itemize}
 \item $F^i_\alpha := \partial_\alpha \phi^i_t \circ \bphi_t^{-1}$ the deformation gradient in component form given two coordinates systems $\{x^i\}$ and $\{a^\alpha\}$,
 i.e. the matrix representation of the 
 tensor $\bF=F^i_\alpha \be_i\otimes\be_\alpha$ 
 with rows labelled by a \emph{Roman} letter like $i,j,k\dots$ to precise 
 coordinates in the \emph{spatial} frame 
 and with columns labelled by a \emph{Greek} letter $\alpha,\beta,\gamma,\dots$ to precise 
 coordinates in the \emph{material} frame 
 \item $|F^i_\alpha|$ the determinant of $F^i_\alpha$, also sometimes denoted $|\bF|$ 
 \item $C^\alpha_i$ the cofactor matrix (or transpose adjugate) of $F^i_\alpha$ 
 \item $u^i := \partial_t \phi^i_t \circ \bphi_t^{-1}$ the 
velocity
 \item $D(u)^{ij} := \frac12\left(\partial_i u^j + \partial_j u^i\right)$ the strain-rate 
tensor
 \item $\div\bu=\partial_iu^i$ the Euclidean divergence for a vector field $\bu$ and
 \item $\delta$ the identity tensor compatible with the Kronecker symbol notation in coordinates
so $\delta_{i}^j = |F^i_\alpha|^{-1}F^j_\alpha C^\alpha_i$ for instance.
\end{itemize}
We classically assume 
that \eqref{eq:totalenergy_balance}--\eqref{eq:momentum_material} 
are 
the Euler-Lagrange equations of a 
variational principle for a Lagrangian density $\hat\rho\left(\frac{1}2|\partial_t \bphi_t|^2- e\circ\bphi_t\right)$ with $e$ a function of $\partial_\alpha\phi^i_t$,
\cite{marsden-hughes-2012-mathematical}, 
and $G_{\alpha\beta}=\delta_{\alpha\beta}$ for simplicity.
Then, $\bS$ is a function of $F_\alpha^i \circ\bphi_t$ i.e. 
\begin{equation}
\label{eq:2Dpiolakirchhof}
S^{i\alpha}  = \hat\rho \partial_{ F^i_\alpha } e
\end{equation}
so \eqref{eq:firstlaw_material} holds with $r=0$. And \eqref{eq:momentum_material} rewrites 
within a
system of 
conservation laws:
\beq 
\label{eq:firstorder_material} 
\begin{aligned}
& \partial_t \left( \hat\rho \: u^i\circ\bphi_t \right) - \partial_\alpha S^{i\alpha} = \hat\rho f^i
\\
& \partial_t \left(F^i_\alpha\circ\bphi_t \right)- \partial_\alpha \left( u^i\circ\bphi_t \right)= 0
\\
& \partial_t \left(|F^i_\alpha|\circ\bphi_t \right)- \partial_{\alpha}\left( C^\alpha_j\circ\bphi_t \: u^j\circ\bphi_t \right) = 0
\end{aligned}
\eeq 
that fully defines \emph{causal} motions in the so-called material (or Lagrangian) description 
as semi-group solutions, 
possibly after 
adding 
\eqref{eq:cofGevolution_material} to \eqref{eq:firstorder_material} when $d=3$
\beq
\label{eq:cofGevolution_material}
\partial_t \left( C^\alpha_i\circ\bphi_t \right) + \sigma_{ijk}\sigma_{\alpha\beta\gamma}\partial_\beta \left( F^j_\gamma\circ\bphi_t \: u^k\circ\bphi_t \right) = 0 
\eeq
where $C^\alpha_i = \sigma_{ijk}\sigma_{\alpha\beta\gamma}F^{j}_{\beta}F^{k}_{\gamma}$, and
$\sigma$ is Levi-Civita's symbol --  
so it holds e.g.
$$
|F^i_\alpha|= \sigma_{ij}\sigma_{\alpha\beta}F^{i}_{\alpha} F^{j}_{\beta}
\qquad
C^\alpha_i=|F^i_\alpha|\sigma_{ij}\sigma_{\alpha\beta}F^{j}_{\beta}
$$
when $d=2$.
Moreover, when $\hat\rho$ is constant, 
smooth motions with a material (or Lagrangian) description 
also have a spatial (or Eulerian) description: 
\beq 
\label{eq:firstorder_spatial} 
\begin{aligned}
& \partial_t \left( \rho u^i \right) +  \partial_j \left( \rho u^j u^i - \sigma^{ij} \right) = \rho f^i
\\
& \partial_t \left( \rho F^i_\alpha \right) +  \partial_j \left( \rho u^j F^i_\alpha - \rho F^j_\alpha u^i \right) = 0
\\
& \partial_t \rho + \partial_j\left( \rho u^j \right) = 0 
\end{aligned}
\eeq 
with Cauchy stress $\sigma^{ij} := |F^i_\alpha|^{-1}F^j_\alpha S^{i\alpha}\circ\bphi_t^{-1}$ function of $F^i_\alpha$,
and 
$\rho := |F^i_\alpha|^{-1} \hat\rho$ 
\cite{wagner-2009},
possibly complemented when $d=3$ 
by 
\beq
\label{eq:cofGevolution_spatial}
\partial_t \left( \rho C^\alpha_i  \right) +  \partial_j \left( \rho u^j C^\alpha_i  \right)
+ \sigma_{ijk}\sigma_{\alpha\beta\gamma}\partial_l \left( |F^i_\alpha|^{-1} F^l_\beta F^j_\gamma u^k  \right) = 0 \,. 
\eeq
The Lagrangian and Eulerian descriptions of \emph{smooth} motions are equivalent as long as the following Piola's identities hold \cite{wagner-2009}: 
\begin{equation}
\label{eq:2Dpiola_identity}
\partial_{j}( |F^i_\alpha|^{-1} F^j_\alpha ) = 0 \quad \forall \: i=1\ldots d.
\end{equation}



\subsection{Constitutive assumptions for elastic bodies and fluids} 
\label{sec:constitutive}

\paragraph{Elastic motions} 
have been considered since the beginnings of continuum mechanics 
for ``solids'' 
\cite{wang-truesdell-1973,maugin2015continuum}. 
Some elastic constitutive assumptions 
efficiently summarize the molecular structure of matter 
at a human scale and are useful to predict real solid behaviours.
In particular, smooth motions of \emph{hyperelastic} 
materials with 
an energy 
$e(F^i_\alpha\circ\bphi_t)$ 
can be well defined when $r=0$ 
as 
solutions to (a Cauchy problem for)
either the second-order equation \eqref{eq:momentum_material} 
\cite{john-1988}, 
or a first-order system of conservation laws: 
\eqref{eq:firstorder_material} in material coordinates, 
or \eqref{eq:firstorder_spatial} in spatial coordinates,
e.g. when $e$ is \emph{polyconvex} and both 
are symmetric-hyperbolic \cite{wagner-2009}.

Postulating indifference 
to Galilean changes of spatial frames as usual in classical physics 
requires that $e$ is function of $F^i_\alpha $ through the right Cauchy-Green deformation tensor 
$F^k_\alpha F^k_\beta$. 
%
Then, for homogeneous isotropic bodies with 
$G^{\alpha\beta}=\delta^{\alpha\beta}$ 
Euclidean, 
a useful 
polyconvex energy is the \emph{neo-Hookean} 
\begin{equation}
\label{eq:neohookean}
e(F^k_\alpha F^k_\alpha):=\frac{\mu}2 ( F^k_\alpha F^k_\alpha 
-d)
\end{equation}
with both molecular and phenomenological justifications 
\cite{Gloria2014}. 

The neo-Hookean model is simplistic, but it is already quantitativaly useful for practical applications.
Moreover, it has many 
refinements. 
For instance, the neo-Hookean model 
cannot capture volumetric changes observed simultaneously with elongation.
%
%
But one can either use the model along with the 
incompressibility 
constraint $|F^i_\alpha|=1$ (if relevant) as a remedy.
Or one can add a \emph{compressible} term function of $|F^i_\alpha|$ in the energy \eqref{eq:neohookean} 
that preserves polyconvexity. 

Non-reversible motions with $r\neq 0$ 
can moreover 
be considered 
when 
$e$ is a function of $F^i_\alpha$ and 
\emph{entropy} $\eta$ such that 
it holds for some dissipation $D\ge0$:
\begin{equation}
\label{eq:secondlaw_material}
r\circ\bphi_t = (\theta\circ\bphi_t) \partial_t( \eta\circ\bphi_t ) - D\circ\bphi_t \,.
\end{equation}
Usual elongations with 
volumetric changes are indeed non-reversible, 
with heat exchanges $r\neq 0$; 
\eqref{eq:secondlaw_material} means that the heat supply may be 
either dissipated by irreversible processes 
(``inelasticities'')
or compensated for by variations in the body state (through entropy).
Using 
\eqref{eq:secondlaw_material} as an additional constitutive assumption 
leads one to introduce the \emph{temperature} 
$\theta=-\partial_\eta e$ 
\cite{Coleman-Noll1963}. 
Then, further constitutive assumptions 
about 
inelasticities and $D$ allow to close \eqref{eq:momentum_material} (or \eqref{eq:firstorder_material}, or \eqref{eq:firstorder_spatial})
complemented by \eqref{eq:firstlaw_material}--\eqref{eq:secondlaw_material} when $r\neq 0$.
For instance, 
smooth \emph{isentropic} motions such that $\partial_t(\eta \circ\bphi_t)=0$
can be defined for polyconvex hyperelastic bodies with $e$ jointly convex in $F^i_\alpha$ and $\eta$,
as well as non-smooth 
motions like 1D 
shocks 
using the inequality associated with \eqref{eq:firstlaw_material}--\eqref{eq:secondlaw_material}
\cite{dafermos-2000}. 

Thermo-elastic models 
in fact use the \emph{Helmholtz free energy} $\psi = e - \theta \eta$ as a function of $\theta$ 
more often than $e$ as a function of $\eta$, 
with a constitutive assumption precising the temperature evolution rather than the entropy evolution.
Then
\begin{equation}
\label{eq:secondlaw_material_bis}
\hat\rho \left( (\eta\circ\bphi_t) \partial_t( \theta\circ\bphi_t ) + \partial_t ( \psi\circ\bphi_t ) \right) - S^{i\alpha} \partial_{\alpha} ( u^i \circ\bphi_t ) 
= -\hat\rho D\circ\bphi_t 
\end{equation}
complements \eqref{eq:momentum_material} (or \eqref{eq:firstorder_material}, or \eqref{eq:firstorder_spatial})
rather than \eqref{eq:firstlaw_material}--\eqref{eq:secondlaw_material}.
It allows one to define (smooth and non-smooth) \emph{isothermal} motions 
for polyconvex hyperelastic bodies when $\psi$ is jointly convex in $F^i_\alpha$ and $\theta$ 
using $\eta=-\partial_\theta\psi$ and $S^{i\alpha}=\hat\rho\partial_{F^i_\alpha}\psi$. 

Non-reversible motions however need a more accurate description in many applications. 
And it remains an active research field how to specify 
inelasticities,
%
especially over a range of temperatures where the material properties change a lot (throughout phase transitions)
and for \emph{large deformations} of flowing bodies when the \emph{fluidity} concept enters \cite{bingham-1922}.
Viscoelasticity is one example of inelasticity.
This will be very clear in Section~\ref{sec:model} with our new UCM system.
We show in section~\ref{sec:physics} that 
the 
UCM 
model 
is only one viscoelastic instance within a large class of mathematically-sound models with inelasticities.
But first, let us recall 
a standard introduction of \emph{viscosity} alone, without elasticity,
as a constitutive assumption for imperfections in irreversible 
flows of \emph{fluids}. 


\paragraph{Fluid flows} 
have long been considered in continuum mechanics. 
The molecular structure of fluids is more difficult to summarize 
than that of solids, because they are much more deformable.
Useful constitutive assumptions for 
simple enough fluid materials have been proposed
-- though usually without a clear link to solids, 
the fluid-solid transition 
being a well-identified difficulty \cite{bingham-1922}. 

A useful 
constitutive law 
for ``perfect'' 
fluids 
is the \emph{polytropic} law 
\beq
\label{eq:polytropic}
e(\rho) 
:= 
\frac{C_0}{\gamma-1} \rho^{\gamma-1}.
\eeq
Smooth motions can be 
defined 
with \eqref{eq:polytropic} 
in the \emph{reduced} spatial 
description 
\beq
\label{eq:firstorder_spatial_reduced}
\begin{aligned}
& \partial_t \rho + \partial_i( u^i \rho ) = 0 
\\
& \rho \left( \partial_t u^i + u^j \partial_j u^i \right) - \partial_j \: \sigma^{ij} 
= \rho f^i
\end{aligned}
\eeq
where the Cauchy stress tensor 
reduces to a pure pressure $p\equiv-\partial_{\rho^{-1}}e=C_0 \rho^\gamma$
\beq
\label{eq:cauchystress_pressure}
\sigma^{ij} 
= -p\: \delta_{ij} \,.
\eeq 
The system \eqref{eq:firstorder_spatial_reduced} is indeed symmetric-hyperbolic,
and it is useful e.g. for the dynamics of simple (monoatomic) gases.
But note that \eqref{eq:firstorder_spatial_reduced} is strictly contained in the Eulerian system \eqref{eq:firstorder_spatial},
and motions are not equivalently described by the larger Lagrangian system \eqref{eq:firstorder_material}
which is \emph{not symmetric hyperbolic}. 

Non-smooth irreversible motions can also be considered 
with \eqref{eq:firstorder_spatial_reduced} complemented by \eqref{eq:secondlaw_material} and an entropy variable $\eta$.
When 
$e$ 
is (jointly) convex in $\rho$ and $\eta$, one can consider 
\emph{isentropic} motions 
through weak solutions, and define univoque 1D shocks 
\cite{majda1984book}. 
Isomorphocally, one can define \emph{isothermal} motions 
using Helmholtz free energy $\psi$, the spatial version of \eqref{eq:secondlaw_material_bis} 
\begin{equation}
\label{eq:secondlaw_spatial_bis}
\rho \left( \eta (\partial_t+u^i\partial_i) \theta + (\partial_t+u^i\partial_i) \psi \right) 
= -\rho D + \sigma^{ij} \partial_j u^i 
\end{equation}
and a temperature variable $\theta$.
%
%
However, 
more constitutive assumptions 
are often needed to precisely describe 
irreversible fluid motions, 
like the \emph{vortices} 
observed in many \emph{viscous} real fluid flows.
%
%
%
To that aim, viscous stresses 
have been introduced in 
\eqref{eq:firstorder_spatial_reduced} 
by adding an \emph{extra-stress} $\btau$ 
as $\bsigma=-p\delta+\btau$ in 
\eqref{eq:cauchystress_pressure} i.e. 
\beq
\label{eq:extra}
\sigma^{ij} =-p\:\delta_{ij}+ \tau^{ij} 
\eeq
provided it is ``objective'' (invariant to Galilean change of spatial frames) 
and ``dissipative'' i.e. $D:=\tau^{ij} \partial_j u^i 
\ge0$ 
\cite{Coleman-Noll1963}. 
The Newtonian 
extra-stress e.g.
\beq
\label{eq:extranewt}
\tau^{ij} = 2\dot{\mu} D(u)^{ij} + \ell \: D(u)^{kk} \: \delta_{ij}
\eeq
is admissible with $D= 2\dot{\mu} D(u)^{ij}D(u)^{ij} + \ell |\partial_iu^i|^2 
\ge0$, in
$
\partial_t \eta + ( u^j \partial_j ) \eta = {D}/\theta 
$
when the entropy $\eta$ is chosen as additional state variable \cite{lions-1996}, or in
\begin{equation}
\label{eq:temperature}
\partial_t \theta + ( u^j \partial_j ) \theta = -{D}/\eta 
\end{equation}
when the temperature $\theta$ is the additional state variable. 
%
%
The NS equations have an interpretation at the same molecular level 
as the polytropic law, 
with $\dot{\mu}>0$ \& $\ell>0$ 
the shear \& bulk 
viscosities typically measured for a fluid 
close to its rest-state 
at given pressure and temperature.
But 
although useful in many 
cases, 
the flows defined by NS 
or any 
momentum balance with diffusion 
are \emph{not local} 
unlike the 
motions defined by \eqref{eq:firstorder_material} for polyconvex hyperelastic bodies. 
%
%

To describe 
local viscous motions, we next follow Maxwell 
and consider \emph{viscoelastic} fluids relaxing to an elastic equilibrium,
where viscosity arises asymptotically only -- just like the steady flows where it is actually measured !
For fast relaxing fluid flows, one may prefer the standard extra-stress approach, leading to the ``simple'' NS 
equations,
at the price of losing 
locality. But that preference 
depends 
on what ``fast'' means in comparison with the physically-relevant 
speeds. For applications when time-dependence is particularly important,
one should 
prefer the viscoelastic models below to the viscous fluid model above.

\subsection{Standard viscoelastic flow models with Maxwell fluids} 
\label{sec:viscoelastic}

Standard viscoelastic constitutive assumptions for the extra-stress are formulated as extensions of viscous fluids, 
first constrained by ``objectivity'' like in \cite{Coleman-Noll1963}. 
Viscoelastic fluids of Maxwell type \cite{Maxwell01011867} thus use differential equations like
\begin{equation}
\label{eq:UCM} 
\lambda \stackrel{\Diamond}{\btau} + \btau = 2 \dot{\mu}  \bD
\end{equation}
for the extra-stress in \eqref{eq:extra}, with $\lambda>0$  a relaxation time scale 
and $\stackrel{\Diamond}{\btau}$ an \emph{objective} time-rate 
\cite{oldroyd-1950,bird-curtiss-armstrong-hassager-1987a,renardy-2000}.
%
%
%
The extra-stress governed by \eqref{eq:UCM} 
is well understood in small deformations 
when 
$\stackrel{\Diamond}{\btau} \approx \partial_t \btau$:
high-frequency motions are elastic 
with 
modulus $\mu :=\dot{\mu} /\lambda$ (in stress units), and low-frequency motions are viscous 
with 
viscosity $\dot{\mu} $.
More generally, it evolves nonlinearly, 
using as time-rate in \eqref{eq:UCM} 
\begin{equation}
\label{eq:gordon} 
\stackrel{\Diamond}{\tau^{ij}} 
= \partial_t \tau^{ij} + u^k\partial_k \tau^{ij} - \partial_k u^i \tau^{kj} - \tau^{ik} \partial_k u^j + \zeta (D(u)^{ik} \tau^{kj} + \tau^{ik} D(u)^{kj})
\end{equation}
for some $\zeta\in[0,2]$.
The nonlinear terms in \eqref{eq:gordon} are believed responsible for non-Newtonian motions observed experimentally,
like rod-climbing (equiv. Weissenberg effect) with polymeric 
liquids \cite{dewitt-1955}.
%
Moreover, the ``dissipativity'' of the extra-stress $\btau$ 
is 
standardly 
analyzed on introducing a \emph{conformation tensor} $\bc$ 
\cite{GRMELA1987271} 
interpreted 
as 
$\EE(\bR\otimes\bR)$ where $\bR(t,\bx)$ is the end-to-end 
vector of 
``dumbbells'' modelling statistically macromolecules suspended in the 
fluid 
\cite{bird-curtiss-armstrong-hassager-1987b,renardy-2000}.

Assume dumbbells are governed by the (overdamped) Langevin equation 
\begin{equation}
\label{eq:langevin0} 
d R^i = \left( - (u^j\partial_j) R^i + (\partial_j u^i) R^j - \frac{2K}\xi F^i(\bR) \right) dt + \sqrt{\frac{4 k_B \theta}\xi} dW^i(t)
\end{equation} 
given friction $\xi$ 
and spring factor $K(\theta)$ 
at 
$\theta$. 
Using \eqref{eq:gordon} 
with 
$\zeta=0$, it leads 
to 
\begin{equation}
\label{eq:conformation} 
\stackrel{\Diamond}{c^{ij}} = - \frac{4K \Hcal'}\xi  c^{ij} + \frac{4 k_B \theta}\xi \delta_{ij}
\end{equation} 
for $\bc$. 
%
Precisely, when $F^i$ in \eqref{eq:langevin0} is non-linear, a good 
approximation \eqref{eq:conformation} should postulate a non-linear potential $\Hcal\left(tr(c)\right)$ 
i.e. $\Hcal'\left(tr(c)\right)$ non-constant so that $\bc$ 
remains strictly positive, see e.g. \cite{hulsen-1990}.
The particular case when $F^i(\bR)=\Hcal' R^i$ with $\Hcal'$ constant does not need approximation:
the random vector $\bR$ is Gaussian and \eqref{eq:conformation} is exact.
It is the consitutive assumption for \emph{Upper-Convected Maxwell} (UCM) fluids.
The motions defined with smooth solutions to \eqref{eq:conformation} indeed satisfy 
\beq
\label{eq:freeenergyevolution}
(\partial_t + u^j\partial_j) \Fcal(c) = 2(K \Hcal' c^{ij} - k_B \theta \delta_{ij}) \partial_i u^j - \frac{4}\xi \Dcal
\eeq
on denoting $[c^{-1}]_{kl}$ the matrix inverse of $c^{ij}$ symmetric positive definite, with
\beq
\label{eq:dumbbellfreeenergy}
\Fcal = K \Hcal\left(tr(c)\right) - k_B \theta \log|c| \,,
\eeq
\beq
\label{eq:dumbbelldissipation}
\Dcal = (K \Hcal' c^{ij} - k_B \theta \delta_{ij}) [c^{-1}]_{jk} (K \Hcal' c^{ik} - k_B \theta \delta^{ik}) \ge 0 \,.
\eeq
So $D \equiv \frac{4}\xi \Dcal$ can be 
a dissipation in 
\eqref{eq:secondlaw_spatial_bis} for isothermal flows,
and $\mathcal F$ a dumbbell contribution to the Helmholtz free energy 
$\psi = e_0(\rho,\theta)+\mathcal F(\bc,\theta)$ 
where $e_0(\rho,\theta)$ is a 
solvent contribution like 
the 
polytropic law \eqref{eq:polytropic}, 
while the 
extra-stress
\begin{equation}
\label{eq:extrastress}
\tau^{ij} = 2\rho (K \Hcal' c^{ij} - k_B \theta \delta_{ij})
\end{equation}
is admissible in \eqref{eq:extra} and has a molecular interpretation through 
$\bR$, $k_B$ being the same 
Boltzmann constant as in \eqref{eq:langevin0} 
\cite{doi-edwards-1998,bird-curtiss-armstrong-hassager-1987b}. 
%
For incompressible isothermal flows ($\partial_i u^i \equiv 0$)
with $\rho$ constant, the evolution of $\btau$ 
satisfies exactly Maxwell upper-convected 
equation \eqref{eq:UCM}
with $\lambda = \frac\xi{4 K \Hcal'}$ and $\dot{\mu} =2\lambda\rho k_B\theta$.
For general 
flows, 
$\btau$ satisfies \eqref{eq:UCM} with additional terms 
in RHS, see \eqref{eq:viscoelasticcomponent} in section~\ref{sec:physics}.

Multi-dimensional models that are extensions of Maxwell seminal ideas
often use the UCM model \eqref{eq:firstorder_spatial_reduced}--\eqref{eq:extra}--\eqref{eq:extrastress}--\eqref{eq:conformation} 
as a starting point, up to the recent efforts \cite{DresslerEdwardsOttinger1999,mackay-philips-2019} 
toward 
non-isothermal 
flows, 
or some variations of UCM \cite{larson1988constitutive,renardy-2000},
using for instance another 
force $F^i$ in \eqref{eq:langevin0} than linear
(which leads to a different viscoelastic flow model with a different free energy), 
or another Langevin equation 
(which could lead to an evolution of conformation \eqref{eq:conformation} 
using $\zeta=2$ rather than $\zeta=0$). 
%
%
%
%
General compressible viscoelastic motions 
have however hardly been analyzed 
or simulated so far, with the full compressible UCM system or any other similar 
viscoelastic model.
We are aware of a 2D hyperbolic quasilinear 
UCM model, 
but it is not a system of conservation laws,
and its numerical simulation relies on some \emph{empirical} 
diffusion 
\cite{PHELAN1989197,EDWARDS1990411,OLSSON1994309}. 
%
%
One difficulty with the (multi-dimensional, compressible) viscoelastic models proposed so far 
might be the lack of a 
mathematical structure to properly define motions through Cauchy problems, 
such as a symmetric hyperbolic system of conservation laws 
\cite{Kato1975,majda1984book}. 

Viscoelastic motions have mostly been studied under the 
incompressibility assumption 
and with additional diffusion so far, whether for UCM or other fluids \cite{owens-philips-2002}.
Indeed, incompressible viscoelastic motions with 
$\partial_i u^i=0$ and $\rho$ constant 
have been well defined as solutions 
to 
Cauchy problems for the UCM model \eqref{eq:firstorder_spatial}--\eqref{eq:extra}--\eqref{eq:extrastress}--\eqref{eq:conformation},
as well as other quasilinear systems 
provided they are 
regular enough \cite{renardy-1985}. 
%
%
Still, numerical simulations of the incompressible UCM system have shown unstable 
in applications 
\cite{joseph-saut-1986,joseph-renardy-saut-1985} 
and most viscoelastic flows have in fact been 
computed for incompressible fluids of Jeffrey type 
with an additional retardation time 
(i.e. a rate-dependent term in \eqref{eq:UCM} which induces velocity diffusion with a ``background viscosity'')
\cite{owens-philips-2002}. 
%
In any case, assuming incompressibility prevents 
locality 
and limits applications to non-isothermal flows. 
%
%
Diffusion 
does not restore the locality 
of motions, on the contrary. 

So the question thus remains 
how to usefully extend Maxwell's seminal viscoelastic model to general (compressible, multi-dimensional) 
motions.

\section{Symmetrizing 
Upper-Convected Maxwell} 
\label{sec:model}

We now propose to rewrite the UCM model as a useful symmetric-hyperbolic system of conservation laws
which extends the 
elastodynamics of polyconvex hyperelastic materials using an additional material metric variable.
%
The new system of conservation laws 
is introduced in section~\ref{sec:conservation}.
It is shown symmetric-hyperbolic in section~\ref{sec:strictlyconvex}.
%
Finally, the physics of UCM is discussed using that new system in section~\ref{sec:physics}.
It allows to interpret UCM as one particular extension of elastodynamics using an additional material metric variable,
with much more potentialities (beyond fluid viscoelasticity) to be discussed in future works.
%
%
%

The present new 
system already has interesting applications, 
see Section~\ref{sec:2D}. 

\subsection{Conservation Laws for UCM}
\label{sec:conservation}

A reformulation of the standard UCM model was already proposed by the K-BKZ theory \cite{kaye-1962,kaye-1963,bernstein-kearsley-zapas-1963,BKZ1964},
to establish a clear link between the viscoelastic UCM fluids and (elastic) solids, 
and to next improve the UCM model. 
But it leads to an integro-differential systems that is not much more easily used for general flows than standard UCM.
Still, to get a useful formulation, we can follow K-BKZ theory and first interpret 
the UCM model 
with the help of the \emph{full} Eulerian description \eqref{eq:firstorder_spatial} of (smooth) motions 
for continuous bodies
as follows.
\begin{proposition}
\label{prop:ode}
Consider smooth motions of UCM fluids such that
$c^{ij}$ satisfies \eqref{eq:conformation} with $\zeta=0$ in 
\eqref{eq:gordon},
and $[F^{-1}]^{\alpha}_i$ denotes the inverse 
of the deformation-gradient 
$F^i_\alpha$. 
It holds for $A^{\alpha\beta} = [F^{-1}]^{\alpha}_i c^{ij} [F^{-1}]^{\beta}_j$ in the material description:
\beq
\label{eq:ode}
\partial_t \left( A^{\alpha\beta}\circ\bphi_t \right)
= \frac{4 k_B \theta}\xi \left( [F^{-1}\circ\bphi_t]^{\alpha}_i [F^{-1}\circ\bphi_t]^{\beta}_i \right) 
- \frac{4K \Hcal'}\xi A^{\alpha\beta}\circ\bphi_t \,. 
\eeq 
\end{proposition}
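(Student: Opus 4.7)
The plan is to show that the upper-convected derivative is precisely the material time derivative of the pull-back of a contravariant $2$-tensor through the deformation gradient. Once established, the ODE \eqref{eq:ode} follows by substituting the UCM evolution \eqref{eq:conformation} and applying the definition of $A^{\alpha\beta}$.

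First I would compute the material time derivative of $[F^{-1}\circ\bphi_t]^{\alpha}_i$. Writing $g^{\alpha}_i(t,\ba):=[F^{-1}\circ\bphi_t]^{\alpha}_i$, one has $g^{\alpha}_i\, \partial_\beta\phi^i_t=\delta^{\alpha}_\beta$ from the very definition of the inverse of $F^i_\alpha$. Differentiating this identity in $t$ and using $\partial_t\partial_\beta\phi^i_t=\partial_\beta(u^i\circ\bphi_t)=(\partial_k u^i)\circ\bphi_t\cdot F^k_\beta\circ\bphi_t$, together with $F^k_\beta\circ\bphi_t \, g^\beta_i=\delta^k_i$, yields the key identity
\[
\partial_t g^{\alpha}_i = -g^{\alpha}_l\,(\partial_i u^l)\circ\bphi_t.
\]

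Next, I would differentiate $A^{\alpha\beta}\circ\bphi_t=g^{\alpha}_i\,(c^{ij}\circ\bphi_t)\,g^{\beta}_j$ by the product rule, using the chain rule $\partial_t(c^{ij}\circ\bphi_t)=(\partial_t c^{ij}+u^k\partial_k c^{ij})\circ\bphi_t$ for the middle factor and the identity above for the two outer factors. After relabelling the dummy indices in the two boundary terms, the three contributions combine to
\[
\partial_t(A^{\alpha\beta}\circ\bphi_t)=g^{\alpha}_i g^{\beta}_j\Bigl[\,\partial_t c^{ij}+u^k\partial_k c^{ij}-(\partial_k u^i)c^{kj}-c^{ik}(\partial_k u^j)\,\Bigr]\!\circ\bphi_t,
\]
where the bracketed expression is exactly $\stackrel{\Diamond}{c^{ij}}$ as defined in \eqref{eq:gordon} with $\zeta=0$.

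Finally I would substitute the UCM constitutive relation $\stackrel{\Diamond}{c^{ij}}=-\tfrac{4K\Hcal'}{\xi}c^{ij}+\tfrac{4k_B\theta}{\xi}\delta^{ij}$ from \eqref{eq:conformation}. The first term reassembles $-\tfrac{4K\Hcal'}{\xi}A^{\alpha\beta}\circ\bphi_t$ by the definition of $A^{\alpha\beta}$, while the second contracts $g^{\alpha}_i\delta^{ij}g^{\beta}_j=[F^{-1}\circ\bphi_t]^{\alpha}_i[F^{-1}\circ\bphi_t]^{\beta}_i$, producing exactly \eqref{eq:ode}. The only mildly delicate step is the computation of $\partial_t g^{\alpha}_i$; otherwise the proof is essentially an index-bookkeeping exercise expressing the intuition that the upper-convected derivative is the Lie derivative of a $(2,0)$-tensor along the flow, whose pull-back through $\bphi_t$ is an ordinary material time derivative.
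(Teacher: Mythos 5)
Your proof is correct and follows essentially the same route as the paper: derive the transport equation for the inverse deformation gradient from that of $F^i_\alpha$, then combine it with \eqref{eq:conformation} to obtain the relaxation ODE for $A^{\alpha\beta}$. The only cosmetic difference is that you carry out the computation directly in the material description (pulling everything back through $\bphi_t$), whereas the paper first writes the spatial form \eqref{eq:ode_spatial} and then reads off \eqref{eq:ode}.
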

\begin{proof}
Recalling \eqref{eq:firstorder_material}, 
the deformation gradient $F^i_\alpha$ 
satisfies 
\beq 
\label{eq:deformationgradient}
(\partial_t + u^i\partial_i) F^i_\alpha - (\partial_j u^i) F^j_\alpha = 0
\eeq
in spatial description. Then, the inverse satisfies
\beq 
\label{eq:deformationgradientinverse}
(\partial_t + u^i\partial_i) [F^{-1}]^{\alpha}_i - [F^{-1}]^{\alpha}_i (\partial_j u^i) = 0
\eeq
which can be combined with \eqref{eq:conformation} to yield
\beq
\label{eq:ode_spatial}
(\partial_t + u^i\partial_i) A^{\alpha\beta} = - \frac{4K \Hcal'}\xi A^{\alpha\beta} + \frac{4 k_B \theta}\xi \left( [F^{-1}]^{\alpha}_i [F^{-1}]^{\beta}_i \right) \,.
\eeq
It follows \eqref{eq:ode} in the material description.
\end{proof}
\begin{corollary}
\label{cor:ode}
Consider smooth motions of UCM fluids like in Prop.~\ref{prop:ode}
given positive constants  $K$, $\Hcal'$, 
$\xi$, $\theta$.
Then, denoting $\lambda = \frac\xi{4 K H}$, it holds
for $t \ge t_0$:
\begin{multline}
\label{sol:ode}
c^{ij}(t)\circ\bphi_t = e^{\frac{t_0-t}\lambda} F^i_\alpha(t)\circ\bphi_t   
[F^{-1}]^{\alpha}_k(t_0)\circ\bphi_{t_0}  [F^{-1}]^{\beta}_k(t_0)\circ\bphi_{t_0}  F^j_\beta(t)\circ\bphi_{t} \\
 + \frac{k_B\theta}{K \Hcal'} \int_{t_0}^t ds \frac1\lambda e^{\frac{s-t}\lambda} F^i_\alpha(t)\circ\bphi_{t} [F^{-1}]^{\alpha}_k(s)\circ\bphi_{s}  [F^{-1}]^{\beta}_k(s)\circ\bphi_{s}  F^j_\beta(t)\circ\bphi_{t} \,.
\end{multline}
\end{corollary}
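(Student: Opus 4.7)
My plan is to solve \eqref{eq:ode} as a linear scalar ODE in $t$ along each material trajectory, and then convert $A^{\alpha\beta}$ back to $c^{ij}$ by contracting with the current deformation gradient.

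Fix a body point $\ba\in\Bcal$ and regard $t\mapsto A^{\alpha\beta}\circ\bphi_t(\ba)$ as a scalar function of time (for each pair of indices $\alpha,\beta$). With $\lambda = \xi/(4K\Hcal')$, equation \eqref{eq:ode} reads
\[
\frac{d}{dt}\bigl(A^{\alpha\beta}\circ\bphi_t\bigr) + \frac{1}{\lambda}\,A^{\alpha\beta}\circ\bphi_t = \frac{k_B\theta}{K\Hcal'\lambda}\,[F^{-1}\circ\bphi_t]^\alpha_k\,[F^{-1}\circ\bphi_t]^\beta_k,
\]
a linear first-order ODE with constant relaxation rate $1/\lambda$. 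The variation-of-constants (Duhamel) formula on $[t_0,t]$ immediately gives
\[
A^{\alpha\beta}\circ\bphi_t = e^{(t_0-t)/\lambda}\,A^{\alpha\beta}\circ\bphi_{t_0} + \frac{k_B\theta}{K\Hcal'}\int_{t_0}^t \frac{1}{\lambda}\,e^{(s-t)/\lambda}\,[F^{-1}]^\alpha_k(s)\circ\bphi_s\,[F^{-1}]^\beta_k(s)\circ\bphi_s\,ds.
\]

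Next I would use the reference-configuration initial condition $c^{ij}(t_0)=\delta^{ij}$ which is implicit in \eqref{sol:ode}; via the definition $A^{\alpha\beta} = [F^{-1}]^\alpha_i c^{ij}[F^{-1}]^\beta_j$, this is equivalent to $A^{\alpha\beta}\circ\bphi_{t_0} = [F^{-1}]^\alpha_k(t_0)\circ\bphi_{t_0}\,[F^{-1}]^\beta_k(t_0)\circ\bphi_{t_0}$. Substituting this into the homogeneous term and then multiplying both sides of the Duhamel identity by $F^i_\alpha(t)\circ\bphi_t$ and $F^j_\beta(t)\circ\bphi_t$ recovers $c^{ij}\circ\bphi_t = F^i_\alpha(t)\circ\bphi_t\,A^{\alpha\beta}\circ\bphi_t\,F^j_\beta(t)\circ\bphi_t$ on the left, and reproduces exactly \eqref{sol:ode} on the right.

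The ODE integration itself is elementary once Prop.~\ref{prop:ode} is granted, so the only genuine obstacle is careful bookkeeping of the $\circ\bphi_{t_0}$, $\circ\bphi_s$ and $\circ\bphi_t$ compositions: the inverse-deformation inside the integral must be evaluated at the integration time $s$ along the trajectory point $\bphi_s(\ba)$, while the outer deformations are at the present time $t$ along $\bphi_t(\ba)$, and the initial inverse-deformations at $t_0$ along $\bphi_{t_0}(\ba)$. No further difficulty is expected.
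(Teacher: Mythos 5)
Your proof is correct and takes essentially the same route as the paper: solve the linear ODE \eqref{eq:ode} along material trajectories by Duhamel's formula, then contract with $F^i_\alpha(t)\circ\bphi_t$ and $F^j_\beta(t)\circ\bphi_t$ to recover $c^{ij}=F^i_\alpha A^{\alpha\beta}F^j_\beta$. Your remark that \eqref{sol:ode} as written tacitly uses the initial condition $c^{ij}(t_0)\circ\bphi_{t_0}=\delta^{ij}$ (equivalently $A^{\alpha\beta}\circ\bphi_{t_0}=[F^{-1}]^{\alpha}_k(t_0)\circ\bphi_{t_0}[F^{-1}]^{\beta}_k(t_0)\circ\bphi_{t_0}$, or, in full generality, that a factor $c^{kl}(t_0)\circ\bphi_{t_0}$ should be inserted in the homogeneous term) is a correct observation that the paper's one-line proof leaves unstated.
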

\begin{proof}
One straightforwardly obtains \eqref{sol:ode} on injecting 
the exact solution to the linear first-order
differential equation \eqref{eq:ode} in $c^{ij} = F^i_\alpha A^{\alpha\beta} F^j_\beta$.
\end{proof}

Next, 
K-BKZ theory assumes
\beq
\label{eq:bkzassumption}
F^i_\alpha(t)\circ\bphi_{t} [F^{-1}]^{\alpha}_k(t_0)\circ\bphi_{t_0} [F^{-1}]^{\beta}_k(t_0)\circ\bphi_{t_0} F^j_\beta(t)\circ\bphi_{t} \to \delta_{ij} \text{ as } t_0 \to -\infty \,,
\eeq
and rewrites the free energy \eqref{eq:dumbbellfreeenergy} 
and the extra-stress \eqref{eq:extrastress} of UCM fluids 
with the (history of) \emph{relative} deformation gradients $F^i_\alpha(t)[F^{-1}]^{\alpha}_k (s)$, $t\ge s$ only,
i.e. without using explicitly material coordinates \cite{kaye-1962,kaye-1963,bernstein-kearsley-zapas-1963,BKZ1964}.
The resulting \emph{integro-differential} 
system 
has allowed one 
to compute viscoelastic UCM motions 
and also other viscoelastic motions 
after generalizing \eqref{sol:ode} to other ``kernels'' than $\frac1\lambda e^{\frac{s-t}\lambda}$,
when \emph{incompressible} (therefore not local) \cite{renardy-1985}.

Here, to define 
local UCM motions,
we propose a new purely differential approach to compute multi-dimensional (compressible) 
flows with a \emph{symmetric-hyperbolic system of conservation laws} inspired by polyconvex elastodynamics.
Unlike K-BKZ theory, we do not avoid 
material coordinates.
We propose to use $A^{\alpha\beta}$ as a 
variable of the system 
and to write $c^{ij}$ as a function of $F^i_\alpha$ and $A^{\alpha\beta}$:
\begin{proposition} 
\label{prop:newucm_spatial}
The smooth isothermal viscoelastic motions 
solutions to the Eulerian model \eqref{eq:firstorder_spatial}--\eqref{eq:extra}--\eqref{eq:extrastress}--\eqref{eq:conformation}
for compressible UCM fluids
are equivalently solutions to the system of conservation laws with algebraic source terms \eqref{eq:newucm_spatial}:
\beq
\label{eq:newucm_spatial}
\begin{aligned}
& \partial_t ( \rho u^i ) + \partial_j\left( \rho u^j u^i \right) - \partial_j \: \left( -p \delta_{ij} + 
 2\rho (K \Hcal' F^i_\alpha A^{\alpha\beta} F^j_\beta 
 - k_B \theta \delta_{ij}) \right) 
= \rho f^i
\\
& \partial_t ( \rho F^i_\alpha ) + \partial_j\left( \rho u^j F^i_\alpha - \rho u^i F^j_\alpha \right) = 0
\\
& \partial_t \rho + \partial_i( u^i \rho ) = 0 
\\
& \partial_t ( \rho A^{\alpha\beta} ) + \partial_j\left( \rho u^j A^{\alpha\beta} \right) 
= \frac{4\rho}\xi \left( k_B \theta \left( [F^{-1}]^{\alpha}_i [F^{-1}]^{\beta}_i \right) - K \Hcal' A^{\alpha\beta} \right) 
\end{aligned}
\eeq
with 
$A^{\alpha\beta} = [F^{-1}]^{\alpha}_i c^{ij} [F^{-1}]^{\beta}_j \in  S^{++}(\R^{d\times d})$. Furthermore, they satisfy
\beq
\label{eq:energy_spatial}
\partial_t E + \partial_j\left( u^j E \right)
- \partial_j \left( u^i \sigma^{ij} \right) = f^iu^i - \frac{4 \rho}\xi \Dcal
\eeq
with 
$E=\rho\left(\frac{|u|^2}2 + \psi\right)$,
$\sigma^{ij}=-p\delta_{ij}+ 2 \rho F^i_\alpha F^j_\beta \partial_{F^k_\alpha F^k_\beta} \psi$, 
$p=\partial_{\rho^{-1}}e_0(\rho,\theta)$, 
\begin{align}
\psi(\rho,F^i_\alpha,A^{\alpha\beta}) 
& = e_0(\rho) + K \Hcal' F^i_\alpha F^i_\beta A^{\alpha\beta}   
- k_B \theta \log|F^i_\alpha A^{\alpha\beta} F^i_\beta| 
\label{eq:Helmholtzfreeenergy}
\\
& = e_0(\rho) + K \Hcal' F^i_\alpha F^i_\beta A^{\alpha\beta}   
+ 2 k_B \theta (\log\rho/\hat\rho-\log|A^{\alpha\beta}|) 
\label{eq:Helmholtzfreeenergy2}
\end{align}
and $\Dcal\ge0$ the same \emph{dissipation} as given by \eqref{eq:dumbbelldissipation}. 
\end{proposition}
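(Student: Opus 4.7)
The plan is to derive the system \eqref{eq:newucm_spatial} from the Eulerian UCM model under the change of variable $A^{\alpha\beta} := [F^{-1}]^\alpha_i c^{ij} [F^{-1}]^\beta_j$, equivalently $c^{ij} = F^i_\alpha A^{\alpha\beta} F^j_\beta$, and then verify \eqref{eq:energy_spatial} by combining the kinetic-energy balance from the momentum equation with a chain-rule computation of $\rho\,D_t\psi$, where $D_t=\partial_t+u^j\partial_j$.

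First I would inject $c^{ij}=F^i_\alpha A^{\alpha\beta}F^j_\beta$ into the extra-stress \eqref{eq:extrastress}: this rewrites the momentum equation of \eqref{eq:firstorder_spatial} as the first line of \eqref{eq:newucm_spatial}, while the continuity and deformation-gradient conservation laws (third and second lines of \eqref{eq:newucm_spatial}) are unchanged from \eqref{eq:firstorder_spatial}. For the fourth equation I would invoke the non-conservative spatial identity \eqref{eq:ode_spatial} already derived in the proof of Proposition~\ref{prop:ode}, multiply it by $\rho$, and use mass conservation to recover conservative form. Preservation of $A^{\alpha\beta}\in S^{++}(\R^{d\times d})$ along smooth motions follows from the preservation of positive-definiteness of $c^{ij}$ by \eqref{eq:conformation} (when $K\Hcal'>0$) and the invertibility of $F^i_\alpha$.

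For the total-energy balance I would treat the two pieces of $E=\rho(|u|^2/2+\psi)$ separately. Contracting the momentum equation with $u^i$ produces the kinetic-energy balance with the terms $\partial_j(u^i\sigma^{ij}) - \sigma^{ij}\partial_j u^i + \rho f^i u^i$. The Helmholtz piece $\rho D_t\psi$ is obtained by chain rule from the three state-variable evolutions: $D_t\rho=-\rho\,\partial_iu^i$; $D_tF^i_\alpha=F^j_\alpha\partial_ju^i$ (equivalent to the second equation of \eqref{eq:newucm_spatial}); and the algebraic form of $D_tA^{\alpha\beta}$ from \eqref{eq:ode_spatial}. The $\rho$-derivative term contributes $p\,\partial_iu^i$ via $p=\partial_{\rho^{-1}}e_0$, while the terms proportional to $\partial_ju^i$ from $\partial_{F^i_\alpha}\psi\cdot D_tF^i_\alpha$ reconstruct exactly $\sigma^{ij}\partial_ju^i$, cancelling the corresponding kinetic-energy term after using the formula $\sigma^{ij}=-p\delta_{ij}+2\rho F^i_\alpha F^j_\beta\partial_{F^k_\alpha F^k_\beta}\psi$; the algebraic part of $D_tA^{\alpha\beta}$ contracted with $\partial_{A^{\alpha\beta}}\psi$ produces the residual $-\frac{4\rho}{\xi}\Dcal$.

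The main technical hurdle is the bookkeeping at the intersection of the two logarithmic identities. On one side I must show that $\sigma^{ij}=-p\delta_{ij}+2\rho F^i_\alpha F^j_\beta\partial_{F^k_\alpha F^k_\beta}\psi$ coincides with the UCM stress $-p\delta_{ij}+2\rho(K\Hcal' F^i_\alpha A^{\alpha\beta}F^j_\beta-k_B\theta\delta_{ij})$, which requires differentiating \eqref{eq:Helmholtzfreeenergy} viewed as a function of the right Cauchy--Green tensor $M^{\alpha\beta}=F^k_\alpha F^k_\beta$ and using $\partial_{M^{\alpha\beta}}\log|M|=[M^{-1}]_{\alpha\beta}$ together with $[F^{-1}]^\alpha_i[F^{-1}]^\beta_i=[M^{-1}]^{\alpha\beta}$. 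On the other side I must verify that the contraction of $\partial_{A^{\alpha\beta}}\psi$ with the algebraic source of $A^{\alpha\beta}$ reproduces exactly the dumbbell dissipation \eqref{eq:dumbbelldissipation}, using $\partial_{A^{\alpha\beta}}\log|A|=[A^{-1}]_{\alpha\beta}$ and recognizing that $F^i_\alpha[A^{-1}]_{\alpha\beta}F^j_\beta$ is the spatial inverse $[c^{-1}]_{ij}$ required in \eqref{eq:dumbbelldissipation}. The consistency of \eqref{eq:Helmholtzfreeenergy} and \eqref{eq:Helmholtzfreeenergy2} (the latter exploiting the Piola-derived identity $\rho|F^i_\alpha|=\hat\rho$) must also be used to ensure the $p\,\partial_iu^i$ term absorbs the $\rho$-dependent contribution from the logarithmic part without leftover terms.
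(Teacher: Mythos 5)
Your proposal follows essentially the same route as the paper's own proof: both derive the $A$-equation from the non-conservative identity \eqref{eq:ode_spatial} established in the proof of Prop.~\ref{prop:ode}, both carry over the remaining conservation laws from \eqref{eq:firstorder_spatial} using the Piola identities \eqref{eq:2Dpiola_identity} and $\rho|F^i_\alpha|=\hat\rho$, and both close by verifying the energy balance \eqref{eq:energy_spatial} via the chain rule on $\psi$ (the paper simply states this check can be done ``directly,'' whereas you spell out the matching of $\sigma^{ij}$ and of the dissipation through the derivatives $\partial_{M^{\alpha\beta}}\log|M|=[M^{-1}]_{\alpha\beta}$ and $\partial_{A^{\alpha\beta}}\log|A|=[A^{-1}]_{\alpha\beta}$, which is correct). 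The only thing worth adding is an explicit sentence for the reverse direction (reconstructing the standard UCM system \eqref{eq:firstorder_spatial}--\eqref{eq:extra}--\eqref{eq:extrastress}--\eqref{eq:conformation} from \eqref{eq:newucm_spatial} via $c^{ij}=F^i_\alpha A^{\alpha\beta}F^j_\beta$ and the Piola identities), since the proposition claims equivalence of the two formulations and not merely that one implies the other.
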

\begin{proof}
We have already shown that the \emph{smooth} isothermal viscoelastic motions described in spatial coordinates by 
the compressible UCM model \eqref{eq:firstorder_spatial}--\eqref{eq:extra}--\eqref{eq:extrastress}--\eqref{eq:conformation} satisfy \eqref{eq:ode}.
Now, smooth motions also satisfy \eqref{eq:deformationgradient} by definition, thus the last line of \eqref{eq:firstorder_spatial}
using the Piola identities \eqref{eq:2Dpiola_identity} for smooth motions like in elastodynamics \cite{wagner-1994}.
So finally, the full system \eqref{eq:newucm_spatial} is satisfied.

Reciprocally, the standard formulation of UCM is recovered from \eqref{eq:newucm_spatial}
using $c^{ij} = F^i_\alpha A^{\alpha\beta} F^j_\beta$ and Piola identities for smooth motions $\phi^i_t$ such that 
$u^i\circ\bphi_t=\partial_t \phi^i_t$, $F^i_\alpha\circ\bphi_t=\partial_\alpha \phi^i_t$, $|F^i_\alpha|=\rho^{-1} \hat\rho>0$ with a constant $\hat\rho>0$.

Last, one can 
check \eqref{eq:energy_spatial} with \eqref{eq:Helmholtzfreeenergy} or \eqref{eq:Helmholtzfreeenergy2} directly for smooth motions,
on recalling $\rho^{-1} = |F^i_\alpha|\hat\rho^{-1}$.
The total energy balance \eqref{eq:energy_spatial} is also exactly that satisfied by UCM 
using \eqref{eq:secondlaw_spatial_bis}
with \eqref{eq:dumbbellfreeenergy}, 
$c^{ij} = F^i_\alpha A^{\alpha\beta} F^j_\beta$
and 
\eqref{eq:dumbbelldissipation}.
\end{proof}

The UCM reformulation \eqref{eq:newucm_spatial} is an interesting system of \emph{conservation laws}.
When $\xi\to\infty$ and $A^{\alpha\beta}$ is constant in time,
the system \eqref{eq:newucm_spatial} coincides with a 
spatial description for compressible motions of homogeneous 
neo-Hookean materials, 
see \cite{dan44217} or \cite[(3.7)]{wagner-2009} when $A^{\alpha\beta}\equiv\delta^{\alpha\beta}$.
Inspired by the latter, we show that a further reformulation of \eqref{eq:newucm_spatial} allows one to define 
flows of compressible UCM fluids
as solutions to a \emph{symmetric-hyperbolic} system of conservation laws.

\subsection{A strictly convex extension 
for UCM}
\label{sec:strictlyconvex}

\begin{proposition}
\label{prop:symmetrichyperbolic}
The 
isothermal viscoelastic motions of compressible UCM fluids defined by smooth solutions to the 
system \eqref{eq:newucm_spatial} with $\bA\in S^{++}(\R^{d\times d})$ 
are also equivalently defined by smooth solutions to 
\beq
\begin{aligned}
\label{eq:newucm_spatial_y}
& \partial_t ( \rho u^i ) + \partial_j\left( \rho u^j u^i \right) - \partial_j \: \left( -p \delta_{ij} + 
 2\rho (K \Hcal' F^i_\alpha A^{\alpha\beta} F^j_\beta 
 - k_B \theta \delta_{ij}) \right) 
= \rho f^i
\\
& \partial_t ( \rho F^i_\alpha ) + \partial_j\left( \rho u^j F^i_\alpha - \rho u^i F^j_\alpha \right) = 0
\\
& \partial_t \rho + \partial_i( u^i \rho ) = 0 
\\
& \partial_t ( \rho Y^{\alpha\beta} ) + \partial_j\left( \rho u^j Y^{\alpha\beta} \right) 
= - \frac{4\rho}\xi 
Y^{\alpha\gamma} \left( k_B \theta Z^{\gamma\delta} - 2 K \Hcal' \delta^{\gamma\delta} \right) Y^{\delta\beta} 
\end{aligned}
\eeq
where $\bA=\bY^{-\frac12}$ is defined componentwise by identification with
the square-root matrix-inverse of $\bY=Y^{\alpha\beta}\be_\alpha\otimes\be_\beta\in S^{++}(\R^{d\times d})$
and $\bZ=\bF^{-T}\bF^{-1}\bA^{-1}+\bA^{-1}\bF^{-1}\bF^{-T}$. 
Furthermore, if $p=-\partial_{\rho^{-1}}e_0$ 
is given by $e_0$ strictly convex in $\rho^{-1}$, then the following additional conservation law is also satisfied
\beq
\label{eq:energy_spatial2}
\partial_t \tilde E + \partial_j\left( u^j \tilde E \right)
- \partial_j \left( u^i \sigma^{ij} \right) = f^iu^i - \frac{4 \rho}\xi \tilde \Dcal
\eeq
with $\tilde E=\rho\left(\frac{|u|^2}2 + e_0(\rho) + K \Hcal' F^i_\alpha F^i_\beta A^{\alpha\beta} + Y^{\alpha\beta}Y^{\alpha\beta} \right)$,
$\sigma^{ij}=-p\delta_{ij}+ 2 \rho F^i_\alpha F^j_\beta A^{\alpha\beta}$ 
and an algebraic source term $\tilde\Dcal$ without sign a priori.
So the \emph{strictly convex} function $\tilde E(  \rho ,\rho u^i,\rho F^i_\alpha, \rho Y^{\alpha\beta} )$ defines a mathematical entropy for 
\eqref{eq:newucm_spatial_y}, 
\eqref{eq:energy_spatial2} defines a \emph{strictly convex extension} for 
\eqref{eq:newucm_spatial_y},
and \eqref{eq:newucm_spatial_y} is 
a \emph{symmetric-hyperbolic} system of conservation laws on the 
open set $\Acal^{+} := \{\rho>0,\ \bY=\bY^{T}>0 \} 
$. 
\end{proposition}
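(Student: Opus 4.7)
The plan is to derive \eqref{eq:newucm_spatial_y} from \eqref{eq:newucm_spatial} via the bijective change of symmetric positive variables $\bA\leftrightarrow\bY^{-1/2}$ on $S^{++}(\R^{d\times d})$, verify the supplementary conservation law \eqref{eq:energy_spatial2}, and then invoke the Godunov--Mock theorem recalled in the introduction once strict convexity of $\tilde E$ in conservative variables has been established.

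For the first step, I would factor out mass conservation to rewrite the $\bA$-line of \eqref{eq:newucm_spatial} as a material derivative $\rho\dot{\bA}$ equated to its algebraic source. Since $\bA=\bY^{-1/2}$, the matrices $\bA$ and $\bY$ commute (both being functions of the same symmetric positive matrix) and satisfy $\bA^2\bY=I$, so differentiating along the flow yields
\[
\dot{\bY}=-\bA^{-1}\dot{\bA}\,\bY-\bY\,\dot{\bA}\,\bA^{-1}.
\]
Substituting the explicit $\dot{\bA}$ and regrouping as $-\tfrac{4}{\xi}\bY\bigl(k_B\theta\,\bZ-2KH'I\bigr)\bY$ with $\bZ=\bF^{-T}\bF^{-1}\bA^{-1}+\bA^{-1}\bF^{-1}\bF^{-T}$ reproduces the last line of \eqref{eq:newucm_spatial_y} after reinstating mass conservation. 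For \eqref{eq:energy_spatial2} I would then compute $\partial_t\tilde E+\partial_j(u^j\tilde E)$ term by term using \eqref{eq:newucm_spatial_y}: the kinetic and $e_0$ contributions reproduce the standard Euler part with pressure $p=-\partial_{\rho^{-1}}e_0$; the polyconvex-like contribution $KH'\rho F^i_\alpha F^i_\beta A^{\alpha\beta}$, treated as in \cite{wagner-2009}, yields the stress divergence $\partial_j\bigl(u^i\cdot 2\rho F^i_\alpha F^j_\beta A^{\alpha\beta}\bigr)$ plus an algebraic term coming from $\dot{\bA}$; and the advected scalar $\rho Y^{\alpha\beta}Y^{\alpha\beta}$ contributes only an algebraic term. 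Collecting all algebraic contributions defines $\tilde\Dcal$, with no sign prescribed a priori.

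The heart of the proof is strict convexity of $\tilde E$ in the conservative variables $(\rho,m^i,M^i_\alpha,N^{\alpha\beta}):=(\rho,\rho u^i,\rho F^i_\alpha,\rho Y^{\alpha\beta})$. The kinetic energy $|m|^2/(2\rho)$ is strictly convex in $(\rho,m)$ by the standard perspective construction; $\rho e_0(\rho)$ is strictly convex in $\rho$ precisely because $e_0$ is assumed strictly convex in $\rho^{-1}$ (a short direct computation); and $\rho Y^{\alpha\beta}Y^{\alpha\beta}=\rho^{-1}\tr(N^2)$ is strictly jointly convex in $(\rho,N)$ as the perspective of the strictly convex map $N\mapsto\tr(N^2)$. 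The truly delicate piece, and the main obstacle, is
\[
\rho KH'F^i_\alpha F^i_\beta A^{\alpha\beta}\;=\;KH'\,\rho^{-1/2}\,\tr\bigl(M^{T}M\,N^{-1/2}\bigr),
\]
whose joint convexity on $\{\rho>0,\,N\in S^{++}(\R^{d\times d})\}$ requires a variant of Lieb's convexity theorem \cite{lieb-1973}, namely joint convexity of $(K,B)\mapsto\tr(K^{*}B^{-p}K)$ on $S^{++}$ (here with $p=1/2$), combined with a perspective-like argument to absorb the $\rho^{-1/2}$ prefactor.

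Once these strict convexities are combined, $\tilde E$ is a strictly convex mathematical entropy and \eqref{eq:energy_spatial2} is a bona fide supplementary conservation law for the quasilinear system \eqref{eq:newucm_spatial_y} on the admissible open set $\Acal^{+}$; the Godunov--Mock theorem then immediately delivers a symmetrizer and hence a symmetric hyperbolic structure. Equivalence of smooth solutions of \eqref{eq:newucm_spatial_y} with those of \eqref{eq:newucm_spatial} finally follows from the smooth bijection $\bA\leftrightarrow\bY^{-1/2}$ on $S^{++}(\R^{d\times d})$.
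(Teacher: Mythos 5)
Your overall strategy mirrors the paper's closely: change variables $\bA\leftrightarrow\bY^{-1/2}$, verify the supplementary balance, then invoke Godunov--Mock once strict convexity of $\tilde E$ in conservative variables is in hand. You then depart from the paper in \emph{how} you attack the convexity: the paper first cites the Wagner/Bouchut equivalence (convexity of $\tilde E$ in $(\rho,\rho u,\rho F,\rho Y)$ is equivalent to convexity of $\tilde E/\rho$ in $(\rho^{-1},u,F,Y)$), after which the Lieb term $\tr(\bF\bY^{-1/2}\bF^T)$ carries no $\rho$-dependence and the decomposition is clean; you instead work directly in conservative variables and re-express every term as a perspective. This is a legitimate alternative -- your identity $\rho^{-1/2}\tr(M^TMN^{-1/2})$ is indeed exactly the perspective of the Lieb functional $\tr(M^TMN^{-1/2})$, so joint convexity does follow.

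However, there is a genuine gap in the strict convexity step. You assert that the kinetic term $|m|^2/(2\rho)$ and the term $\rho^{-1}\tr(N^2)$ are \emph{strictly} jointly convex as perspectives of strictly convex functions. This is false: a perspective $t\mapsto t\,g(x/t)$ is positively $1$-homogeneous, hence affine along rays through the origin, so it is never strictly convex jointly in $(t,x)$. (In $1$D the Hessian of $m^2/\rho$ has zero determinant.) The same remark applies to $\rho^{-1/2}\tr(M^TMN^{-1/2})$. Consequently ``combining these strict convexities'' does not deliver strict convexity of the sum by itself. To repair this you must either (i) fall back on the Wagner/Bouchut lemma as the paper does, in which case strict convexity of $\tilde E/\rho$ in $(\rho^{-1},u,F,Y)$ follows from strict convexity of $e_0$ in $\rho^{-1}$, of $|u|^2/2$ in $u$, of $Y\mapsto\tr(Y^2)$, and of $F\mapsto\tr(FY^{-1/2}F^T)$ for fixed $Y$, layered over joint convexity of the Lieb term; or (ii) stay in conservative variables but argue by cases: the only strictly-convex-in-$\rho$ piece, $\rho\,e_0(\rho)$, breaks any degeneracy in the $\rho$-direction, while at fixed $\rho$ the kinetic, Lieb and $\tr(N^2)$ pieces are strictly convex in $m$, $M$ and $N$ respectively, so the sum is strictly convex on $\Acal^+$. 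With that correction your route closes; as written, the claim of term-by-term strict joint convexity is incorrect and the conclusion does not follow as stated.
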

\begin{proof} First, recalling 
$\partial_t \bA^{-2} = -  \bA^{-2} (\partial_t \bA) \bA^{-1} - \bA^{-1} (\partial_t \bA) \bA^{-2}$
for smooth matrix-valued functions $\bA(t)$ 
one straightforwardly establishes the equivalence between formulations \eqref{eq:newucm_spatial_y} and \eqref{eq:newucm_spatial}
when $\bY,\bA\in S^{++}(\R^{d\times d})$. 
Note that $A^{\alpha\beta}A^{\beta\gamma}Y^{\gamma\delta}=\delta^{\alpha\delta}$ defines a bi-univoque relationship
on the 
open set $\Acal^{+}$.
Next, one shows directly 
\eqref{eq:energy_spatial2}: 
the computation is similar to that for $E$ in Prop.~\ref{prop:newucm_spatial}.

Then, Godunov-Mock theorem \cite[Chapter 3]{godlewski-raviart-1996} 
implies that $\tilde E$ is a mathematical entropy 
and \eqref{eq:energy_spatial2} a strictly convex extension
for the \emph{symmetric-hyperbolic} system \eqref{eq:newucm_spatial_y} 
provided $\tilde E(\rho,\rho u^i,\rho F^i_\alpha,\rho Y^{\alpha\beta})$ is strictly convex 
on the 
convex set $\Acal^{+}\subset\RR^{1+ d + d(d+1)/2 + d^2}$. 
%
We recall that the (strict) convexity of $\tilde E$ 
function of $(\rho,\rho u^i,\rho F^i_\alpha,\rho Y^{\alpha\beta})$ on $\Acal^{+}$
is equivalent to the (strict) convexity of $E/\rho$ 
function of $(\rho^{-1},u^i,F^i_\alpha,Y^{\alpha\beta})$ on $\Acal^{+}$, see \cite[Th. 3.1]{wagner-2009}
or \cite[Lemma 1.4]{bouchut-2004}. 
As a matter of fact, $E/\rho$ is a mathematical entropy 
for an equivalent system of conservation laws 
in material coordinates which we detail later, see \eqref{eq:energy_material}.

Now, $e_0$ 
and $\frac{|u|^2}2$
are strictly convex 
in $\rho^{-1}>0$ and $u^i$, respectively.
Then, $E/\rho$ is a strictly convex function of $(\rho^{-1},u^i,Y^{\alpha\beta},F^i_\alpha)$ on $\Acal^{+}$
if 
$ F^i_\alpha F^i_\beta A^{\alpha\beta} + Y^{\alpha\beta}Y^{\alpha\beta}$ is a strictly convex function of $(Y^{\alpha\beta},F^i_\alpha)$ on $\Acal^{+}$.
We conclude in two steps.
On the one hand, 
$
(\bF,\bY) \in \RR^{d\times d} \times S^{++}(\RR^{d\times d}) \to tr(\bF\bY^{-\frac12}\bF^T)
$
is a (jointly) convex function of its $d^2+d(d+1)/2$ arguments 
by 
Theorem 2 in \cite[p.276]{lieb-1973} with $r=\frac12$ and $p=0$.
On the other hand, \emph{strict} convexity holds since $Y^{\alpha\beta}Y^{\alpha\beta}$ is strictly convex in $Y^{\alpha\beta}$,
and $ F^i_\alpha F^i_\beta A^{\alpha\beta}$ is strictly convex in $F^i_\alpha$. 
\end{proof}

\begin{corollary}
\label{cor:strongsol}
Consider the UCM formulation \eqref{eq:newucm_spatial_y} i.e. the 
conservation laws 
\begin{equation}
\label{eq:quasilinear}
\partial_t q + \grad_qF_i(q)\partial_i q = B(q)
\end{equation}
with 
$F_i,B$ 
$C^\infty$ in $q=(\rho,\rho u^i,\rho Y^{\alpha\beta},\rho F^i_\alpha)$
when $q\in \Acal^{+}$ lies in an open convex set, and \eqref{eq:quasilinear} is symmetric-hyperbolic, 
recall Prop.~\ref{prop:symmetrichyperbolic}. 
For all state $q_0 \in \Acal^{+}$, 
and for all $\left(1+ d + d(d+1)/2 + d^2\right)$-dimensional 
perturbation $\tilde q_0 \in H^s(\RR^d)$ in Sobolev space $H^s$ with $s>1+d/2$ 
such that $q_0 + \tilde q_0$ is compactly supported in $\Acal^{+}$,
there exists $T>0$ and a unique classical solution $q\in C^1([0,T)\times\RR^d)$ to \eqref{eq:quasilinear} 
such that $q(t=0)=q_0 + \tilde q_0$. 
Furthermore, $q - q_0 \in C^0([0,T),H^s)\cap C^1([0,T),H^{s-1})$. 
\end{corollary}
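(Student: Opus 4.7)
The plan is to invoke the classical local well-posedness theorem for Friedrichs-symmetric hyperbolic quasilinear systems with smooth coefficients, due essentially to Kato \cite{Kato1975} and Majda \cite{majda1984book}. Proposition~\ref{prop:symmetrichyperbolic} has already put the system \eqref{eq:quasilinear} precisely in the form required by such a theorem, so the argument reduces to checking hypotheses and handling one state-constraint subtlety.

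First, I would verify smoothness of the coefficients: on the open convex set $\Acal^{+}$, the fluxes $F_i$ and source $B$ are $C^\infty$ because all nonlinearities reduce to smooth functions of $\rho^{-1}$, $\bF^{-1}$, and $\bY^{\pm 1/2}$, each well-defined and analytic whenever $\rho>0$ and $\bY\in S^{++}(\RR^{d\times d})$. The strictly convex mathematical entropy $\tilde E$ produced by Proposition~\ref{prop:symmetrichyperbolic} then provides a smooth, positive-definite Friedrichs symmetrizer $\grad_q^2\tilde E(q)$ on $\Acal^{+}$. Because $\tilde q_0$ has compact support, the range of the initial datum $q_0+\tilde q_0$ lies in a compact subset $K\subset\Acal^{+}$; and since $s>1+d/2$, the Sobolev embedding $H^s(\RR^d)\hookrightarrow C^1_b(\RR^d)$ gives the regularity needed to make sense of the nonlinear compositions arising from $F_i$ and $B$.

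Second, I would apply the Kato--Majda theorem directly: it produces $T>0$ and a unique solution $q$ with $q-q_0\in C^0([0,T),H^s)\cap C^1([0,T),H^{s-1})$. The classical regularity $q\in C^1([0,T)\times\RR^d)$ then follows from the Sobolev embedding applied to both components of this functional setting, combined with the equation \eqref{eq:quasilinear} itself (which expresses $\partial_t q$ as a smooth function of $q$ and $\partial_i q$).

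The main obstacle, and the reason for the compact-support hypothesis, is ensuring that $q(t,\bx)$ remains inside the open admissible set $\Acal^{+}$ throughout $[0,T)$, so that the smooth structure used above persists along the solution. This is handled by a standard continuity argument: since the initial range is contained in the compact set $K\subset\Acal^{+}$ and $q$ is continuous in $(t,\bx)$, one can shrink $T$ if necessary so that $q([0,T)\times\RR^d)$ lies in a slightly enlarged compact neighborhood of $K$ still contained in $\Acal^{+}$, which keeps $\rho$ bounded away from $0$ and $\bY$ uniformly positive definite. The symmetry constraint $Y^{\alpha\beta}=Y^{\beta\alpha}$ is preserved automatically by uniqueness, since the source in the $\bY$-equation has the form $\bY \bG \bY$ with $\bG$ symmetric, and is therefore manifestly symmetric in $(\alpha,\beta)$ whenever $\bY$ is.
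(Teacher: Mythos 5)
Your proposal is correct and follows essentially the same route as the paper: invoke the classical local well-posedness theorem for Friedrichs-symmetrizable quasilinear systems (the paper cites Theorem~10.1 of Benzoni-Gavage \& Serre, you cite Kato and Majda, which is the same circle of results), and handle the open constraint set $\Acal^{+}$ by the compact-range/continuity argument (the paper cites Theorem~13.1 of the same book, modelled on the Euler case $\{\rho>0\}$). You flesh out details the paper leaves implicit — the explicit symmetrizer $\grad_q^2\tilde E$, the Sobolev embedding giving $C^1$ regularity, and the invariance of the admissible set. One small remark: the symmetry of $\bY$ needs no preservation argument, since the dimension count $1+d+d(d+1)/2+d^2$ already shows that $Y^{\alpha\beta}$ is parametrized by its $d(d+1)/2$ independent entries, so the constraint is built into the state space rather than being a conserved side condition.
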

\begin{proof}
When the UCM reformulation \eqref{eq:newucm_spatial_y} is a \emph{symmetric-hyperbolic} system of conservation laws
with a smooth source term as in Prop.~\ref{prop:symmetrichyperbolic},
the small-time existence of smooth classical solutions is straightforward,
see e.g. Theorem 10.1 in \cite[Chapter 10]{benzonigavage-serre-2007}. 
In Corr.~\ref{cor:strongsol}, one should however take care of the domain $\Acal^{+}$.
Now, it is open, convex and can be treated similarly to $\{\rho>0\}$ for the Euler equations of gas dynamics like in Theorem 13.1 of \cite[Chapter 13]{benzonigavage-serre-2007}. 
\end{proof}

To our knowledge, 
Cor.~\ref{cor:strongsol} is the first well-posedness result 
for the Cauchy problem 
of the compressible 
multi-dimensional UCM model without background viscosity, 
i.e. the first well-posedness result for a model of \emph{genuinely} causal viscoelastic flows (of Maxwell fluids) 
satisfying the locality principle.
Similarly to elastodynamics \cite{wagner-2009}, that latter result straightforwardly extends to the 
\emph{non-isothermal} compressible UCM models where \eqref{eq:newucm_spatial_y} is complemented with \eqref{eq:temperature}
when $\tilde E$ 
remains a convex extension, i.e. is strictly convex jointly for $q$ and $\theta$.

\smallskip

The  ``relaxation'' form 
of source terms in 
\eqref{eq:newucm_spatial_y} also suggests the possibility of damping, 
and the existence of global (strong) solutions for sufficiently small initial data 
close to an equilibrium $q^\infty$ such that $B(q^\infty)=0$.
However, we leave this question for future works. 
Note that our symmetrizer has been obtained with the 
convex extension \eqref{eq:energy_spatial2}, which is not dissipative like \eqref{eq:energy_spatial}. 
But physically, dissipativity should be required, for instance the inequality $\le$ in \eqref{eq:energy_spatial}. 
%
%
So the setting is non-standard 
\cite{dafermos-2000}. 
In particular, difficulties are also to be expected for numerical simulations by the standard discretization of symmetric-hyperbolic systems.
Thus discretization will also be the object of future specialized works.
%

\smallskip

In any case, our new UCM formulation has promising applications 
in 
geophysics 
that can already be discussed 
here, see Section \ref{sec:2D}.
To that aim, 
let us first interpret physically the new variable $A^{\alpha\beta}$ in 
section~\ref{sec:physics} below, which also shows the many potentialities of our new 
system as an extension of elastodynamics.

\subsection{UCM as extended elastodynamics 
and beyond} 
\label{sec:physics}

Let us recall that the system 
\eqref{eq:newucm_spatial} 
models \emph{viscoelastic ``fluid''} flows (with stress relaxation)
insofar as the stress component $\btau$ defined in \eqref{eq:extrastress}
satisfies an equation of the type \eqref{eq:UCM}, 
with $\xi=0$ and additional terms due to compressibility.
\begin{proposition}
\label{prop:ucm}
In smooth motions defined by \eqref{eq:newucm_spatial} 
the Cauchy 
stress $\bsigma$ in the spatial momentum balance has a viscoelastic component
\begin{equation}
\label{eq:viscoelasticcomponent}
\btau = 2 \rho \left( K \Hcal' F^i_\alpha A^{\alpha\beta} F^j_\beta - k_B \theta \delta_{ij} \right)
\end{equation}
solution to the modified Maxwell equation \eqref{eq:UCM} with one additional term
\begin{equation}
\label{eq:UCMmodified} 
\lambda \stackrel{\Diamond}{\btau} + \div\bu \: \lambda \: \btau  + \btau = 2 \dot{\mu}  \bD \,,
\end{equation}
an upper-convected time-rate with $\xi=0$ in \eqref{eq:gordon} and $\lambda=4K \Hcal'/\zeta$, $\dot{\mu} =2 \rho k_B \theta \lambda$.
\end{proposition}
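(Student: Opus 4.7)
The plan is to compute the upper-convected derivative of $\btau$ defined by \eqref{eq:viscoelasticcomponent} directly from the evolution equations gathered in \eqref{eq:newucm_spatial}, by a careful application of the product rule.

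First I would convert the conservation laws into non-conservative (material-derivative) form. Using mass conservation $\partial_t\rho+\partial_k(\rho u^k)=0$, the equation for $\rho F^i_\alpha$ yields
\[
(\partial_t+u^k\partial_k)F^i_\alpha=(\partial_k u^i)F^k_\alpha,
\]
and the equation for $\rho A^{\alpha\beta}$ yields
\[
(\partial_t+u^k\partial_k)A^{\alpha\beta}=\tfrac{4}{\xi}\bigl(k_B\theta\,[F^{-1}]^\alpha_k[F^{-1}]^\beta_k-K\Hcal' A^{\alpha\beta}\bigr).
\]
Next, set $c^{ij}:=F^i_\alpha A^{\alpha\beta}F^j_\beta$ and apply the product rule to $(\partial_t+u^k\partial_k)c^{ij}$. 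The two terms coming from differentiating the $F$'s produce exactly $(\partial_k u^i)c^{kj}+c^{ik}(\partial_k u^j)$, which is the convective part of the upper-convected rate (with $\zeta=0$ in \eqref{eq:gordon}). Using $F^i_\alpha[F^{-1}]^\alpha_k=\delta^i_k$, the remaining term collapses to $\tfrac{4}{\xi}(k_B\theta\,\delta_{ij}-K\Hcal' c^{ij})$. Hence
\[
\stackrel{\Diamond}{c^{ij}}=\tfrac{4}{\xi}\bigl(k_B\theta\,\delta_{ij}-K\Hcal' c^{ij}\bigr),
\]
so that the standard conformation equation \eqref{eq:conformation} is recovered, as it should.

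Then I would compute $\stackrel{\Diamond}{\tau^{ij}}$ for $\tau^{ij}=2\rho K\Hcal' c^{ij}-2\rho k_B\theta\,\delta_{ij}$. Writing
\[
\stackrel{\Diamond}{(\rho c^{ij})}=\rho\stackrel{\Diamond}{c^{ij}}+c^{ij}(\partial_t+u^k\partial_k)\rho,\qquad \stackrel{\Diamond}{(\rho\delta_{ij})}=\delta_{ij}(\partial_t+u^k\partial_k)\rho-2\rho D(u)^{ij},
\]
and using $(\partial_t+u^k\partial_k)\rho=-\rho\,\div\bu$, one obtains after collecting:
\[
\stackrel{\Diamond}{\tau^{ij}}=-\tfrac{1}{\lambda}\tau^{ij}-\tau^{ij}\,\div\bu+4\rho k_B\theta\,D(u)^{ij},
\]
with $\lambda=\xi/(4K\Hcal')$. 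Multiplying by $\lambda$ and identifying $\dot\mu=2\lambda\rho k_B\theta$ gives exactly \eqref{eq:UCMmodified}.

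There is no real obstacle beyond careful bookkeeping: the only conceptual point is to see that the $\rho$-prefactor in the physical extra-stress \eqref{eq:extrastress} is responsible for the additional compressibility term $\lambda\,\div\bu\,\btau$ in \eqref{eq:UCMmodified}, whereas the $F^i_\alpha A^{\alpha\beta}F^j_\beta$ factor reproduces the standard incompressible UCM structure of \eqref{eq:UCM} via the identity $F^i_\alpha[F^{-1}]^\alpha_k=\delta^i_k$. The latter identity is rigorously available for smooth motions because Piola's identities \eqref{eq:2Dpiola_identity} guarantee invertibility of $\bF$ (equivalently $\rho>0$), as already used in Prop.~\ref{prop:newucm_spatial}.
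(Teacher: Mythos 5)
Your proof is correct and follows essentially the same route as the paper's: a direct product-rule computation combining the material derivatives of $\rho$, $\bF$ and $\bA$, with the paper simply condensing the bookkeeping by first normalizing $\bt := \btau/(2\rho k_B\theta)$ so that $\bt$ obeys the unmodified equation $\lambda\stackrel{\Diamond}{\bt}+\bt=2\lambda\bD$, the $\div\bu$ term then appearing when the nonconstant factor $\rho$ is restored. One small imprecision in your closing remark: Piola's identities \eqref{eq:2Dpiola_identity} are compatibility conditions on $\bF$, not a guarantee of invertibility; the identity $F^i_\alpha[F^{-1}]^\alpha_k=\delta^i_k$ is available simply because smooth motions are assumed to be diffeomorphisms with $|F^i_\alpha|=\rho^{-1}\hat\rho>0$, as in Prop.~\ref{prop:newucm_spatial}.
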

\begin{proof}
This is a direct computation on recalling $\theta,K,H$ are constants (in the considered isothermal motions) so
$\bt:=\frac{\btau}{2 \rho k_B \theta} $ is solution to $\lambda \stackrel{\Diamond}{\bt} + \bt = 2\lambda \bD$.
\end{proof}
So formally, the stress in the compressible UCM model 
\eqref{eq:newucm_spatial} is then either ``elastic'' (like the stress in hyperelastic solids) or ``viscous'' (like extra-stress in Newtonian fluids) asymptotically as expected. 
This is usual for a ``Maxwell 
fluid'': it tends to a Newtonian fluid at a characteristic time-scale $\lambda>0$,
and it is elastic at shorter times (recall the K-BKZ theory).

But 
our UCM system \eqref{eq:newucm_spatial} can be \emph{precisely} interpreted 
as an extension of the elastodynamics of hyperelastic solids
using an additional ``material'' metric variable $\bA$ (attached to matter, $A^{\alpha\beta}$ in coordinates)
that describes locally 
the physical state of the 
material body, and our UCM fluid becomes Newtonian with viscosity $\dot{\mu} >0$ at ``large-time'' equilibrium 
thanks to a specific form of the relaxation limit for $A^{\alpha\beta}$.
On the one hand, other relaxation limits for $A^{\alpha\beta}$ are possible, which are physically meaningful and reminiscent of complex materials in the literature.
The system  \eqref{eq:newucm_spatial} with one particular \emph{viscoelastic} relaxation limit for $A^{\alpha\beta}$
is only one instance 
in a class that extends elastodynamics 
to complex materials with inelasticities, see subsection~\ref{sec:hookean}.
On the other hand, it 
suggests a new understanding of the Newtonian fluid, as explained below. 

\subsubsection{The Newtonian viscous limit regime}
\label{sec:newtonian}

In smooth motions 
our formulation \eqref{eq:newucm_spatial} 
of UCM contains standard formulations 
of section~\ref{sec:viscoelastic},
and the viscoelastic stress component 
then formally converges to $\btau\approx 2 \dot{\mu}  \bD$ 
when $\lambda\ll1$, $\rho\theta\gg1$ and $\dot{\mu} =2 \rho k_B \theta \lambda$ is fixed, 
like in standard cases.

But moreover, unlike standard cases, 
our \emph{symmetric-hyperbolic} system 
allows 
the (first) proof 
that the compressible UCM mode is mathematically sensible,
with univoque 
(strong) solutions 
to the Cauchy problem given smooth initial values.
So our system is also a new starting point to establish \emph{mathematically} the NS equations as a precise 
limit of viscoelastic equations, of UCM in particular.
We will elaborate on this elsewhere,
see \cite{zbMATH06380509} for a recent mathematical justification of NS 
starting from a slightly modified 
UCM model.

Furthermore, our formulation with conservation laws 
suggests one to study the formation and stability of shocks,
i.e. weak solutions with jumps across a 
discontinuity surface, 
which are physically relevant for fluids. 
Some conservation laws 
could be irrelevant, 
but our new formulation 
at least suggests one an 
approach how to perform shock computations 
inline with seminal studies using \eqref{eq:firstorder_spatial_reduced} for gases, 
and inline with more recent studies using 
\eqref{eq:firstorder_spatial} for solids \cite{Morando2019}. 
This will be the subject of future works, as well as other quantitative studies discretizing our conservation laws with standard techniques.

\subsubsection{The Hookean elastic limit regime and its inelastic extensions}
\label{sec:hookean}

Unlike the standard UCM systems 
of section~\ref{sec:viscoelastic}, the formal limit of the viscoelastic stress 
$\btau\approx \mu \left( \frac{K \Hcal'}{k_B \theta} F^i_\alpha A^{\alpha\beta} F^j_\beta - \delta_{ij} \right)$ in \eqref{eq:newucm_spatial} 
when $\lambda\gg1$, $\dot{\mu}\gg1$ and $\mu=\dot{\mu}/\lambda$ 
is clearly the same neo-Hookean elastic contribution 
as in 
elastodynamics 
for a Riemannian body with inverse 
metric $G^{\alpha\beta}=\frac{K \Hcal'}{k_B \theta} A^{\alpha\beta}$.
%
Indeed, in the 
limit $\lambda\gg1$ where $A^{\alpha\beta}$ becomes time-independent in the material description,
$A^{\alpha\beta}$ 
can indeed 
be interpreted as 
the inner metric (inverse) $G^{\alpha\beta}$ of a Riemannian body, 
possibly non-Euclidean $G^{\alpha\beta}\neq\delta^{\alpha\beta}$ when the body is pre-stressed \cite{Kupferman2017}. 
But note that in general, the 
variable $A^{\alpha\beta}$ solution to \eqref{eq:newucm_spatial} is not time-independent in the material description.
So it cannot be a material 
metric like $G^{\alpha\beta}$ for the Riemannian flowing body 
as long as the mass balance 
in \eqref{eq:newucm_spatial} 
reads as usual for $\rho = |F^i_\alpha|^{-1} \sqrt{|G|} \hat\rho$. 
In an evolution problem, the initial value of $A^{\alpha\beta}$ could nevertheless model pre-stress similarly to $G^{\alpha\beta}$ when non-Euclidean. 

In general, the new metric variable $A^{\alpha\beta}$
should rather be compared with the 
metric $K^\alpha_kK^\beta_k$ that arises in elasto-plasticity, after adding a plastic deformation 
$\bK^{-1}$ and a ``flow rule'' governing its evolution like in e.g. \cite{Krishnan-Steigmann2014} and many references therein, to extend 
elastodynamics with some inelasticities.

This may be seen more easily in the material (or Lagrangian) description. 
\begin{proposition} 
\label{prop:newucm_material}
When $\hat\rho$ is constant, 
the smooth isothermal (viscoelastic, compressible) UCM motions are equivalently described 
in spatial coordinates, 
by either \eqref{eq:firstorder_spatial}--\eqref{eq:extra}--\eqref{eq:extrastress}--\eqref{eq:conformation}, or \eqref{eq:newucm_spatial}, or \eqref{eq:newucm_spatial_y}, 
and in material coordinates, 
by
\beq
\label{eq:newucm_material}
\begin{aligned}
& \partial_t ( u^i\circ\bphi_t ) - \partial_\alpha \: \left( 
\left[ - (p/\rho) [F^{-1}]^\alpha_i + 
 2 (K \Hcal' A^{\alpha\beta} F^i_\beta 
 - k_B \theta [F^{-1}]^\alpha_i \right]\circ\bphi_t ) \right) 
= \rho f^i\circ\bphi_t
\\
& \partial_t ( F^i_\alpha\circ\bphi_t ) - \partial_\alpha ( u^i\circ\bphi_t ) = 0
\\
& \partial_t ( \rho^{-1}\circ\bphi_t ) - \partial_\alpha ( \hat\rho C^\alpha_i\circ\bphi_t u^i\circ\bphi_t ) = 0
\\
& \partial_t ( A^{\alpha\beta}\circ\bphi_t ) = \frac{4}\xi \left( k_B \theta \left( [F^{-1}]^{\alpha}_i [F^{-1}]^{\beta}_i \right) - K H A^{\alpha\beta} \right)\circ\bphi_t 
\end{aligned}
\eeq
with $A^{\alpha\beta} = [F^{-1}]^{\alpha}_i c^{ij} [F^{-1}]^{\beta}_j \in S^{++}(\R^{d\times d})$. 
Furthermore, if $p=\partial_{\rho^{-1}}e_0(\rho,\theta)$, 
\beq
\label{eq:energy_material}
\partial_t ( [E/\rho]\circ\bphi_t ) - \partial_\alpha \left( [u^i \sigma^{ij}]\circ\bphi_t \right) = - \frac{4}\xi \Dcal\circ\bphi_t
\eeq
then holds with 
$E/\rho=\frac{|u|^2}2 + \psi$,
$\sigma^{ij}=\partial_{F^i_\alpha} \psi $, 
$\psi$ as in \eqref{eq:Helmholtzfreeenergy}
and 
$\Dcal\ge0$ as in \eqref{eq:dumbbelldissipation}.
\end{proposition}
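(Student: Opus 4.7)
The plan is to transport each equation of the already-established spatial formulation \eqref{eq:newucm_spatial} from Prop.~\ref{prop:newucm_spatial}, together with the corresponding energy balance \eqref{eq:energy_spatial}, to material coordinates through the smooth diffeomorphism $\bphi_t$. Because $\hat\rho$ is constant and $|F^i_\alpha|>0$, each conversion is purely algebraic: it relies on the chain rule, on the commutation of $\partial_t$ with pullback along $\bphi_t$ (so that $(\partial_t+u^j\partial_j)g\circ\bphi_t = \partial_t(g\circ\bphi_t)$ for any smooth spatial field $g$), on the cofactor identity $\partial_t|F^i_\alpha| = C^\alpha_i\,\partial_\alpha(u^i\circ\bphi_t)$, and on the Piola identities \eqref{eq:2Dpiola_identity}, exactly as in polyconvex elastodynamics.

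I would then treat the four lines of \eqref{eq:newucm_material} separately. The deformation-gradient equation is immediate from $F^i_\alpha\circ\bphi_t = \partial_\alpha\phi^i_t$ and $u^i\circ\bphi_t = \partial_t\phi^i_t$. The mass-balance line follows from $\rho^{-1}\circ\bphi_t = \hat\rho^{-1}|F^i_\alpha|\circ\bphi_t$ combined with the cofactor identity. The $A^{\alpha\beta}$ equation is nothing but \eqref{eq:ode}, already proved in Prop.~\ref{prop:ode}, so there is nothing new to do for it. For the momentum line, I would introduce the first Piola--Kirchhoff stress $S^{i\alpha} := |F^j_\beta|\,[F^{-1}]^\alpha_j\,\sigma^{ij}\circ\bphi_t$ with Cauchy stress $\sigma^{ij} = -p\,\delta_{ij} + 2\rho(K\Hcal'\,F^i_\alpha A^{\alpha\beta}F^j_\beta - k_B\theta\,\delta_{ij})$ read off \eqref{eq:newucm_spatial}, use the Piola identities to rewrite $\partial_j\sigma^{ij}\circ\bphi_t$ as $\hat\rho^{-1}\partial_\alpha S^{i\alpha}$, and divide the Lagrangian momentum balance by the constant $\hat\rho$ to produce exactly the bracketed expression of \eqref{eq:newucm_material}.

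For the energy balance \eqref{eq:energy_material} I see two equivalent routes: either multiply the material momentum equation by $u^i\circ\bphi_t$ and combine with the chain-rule expansion of $\partial_t(\psi\circ\bphi_t)$ using the three evolutions just obtained, or transport \eqref{eq:energy_spatial} through $\bphi_t$ using the same Piola identities. The only real obstacle, along either route, is to verify the thermomechanical identification $\sigma^{ij} = -p\,\delta_{ij} + 2\rho\,F^i_\alpha F^j_\beta\,\partial_{F^k_\alpha F^k_\beta}\psi$ and equivalently $S^{i\alpha} = \hat\rho\,\partial_{F^i_\alpha}\psi$ with $\psi$ given by \eqref{eq:Helmholtzfreeenergy}. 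This is a direct differentiation, but slightly delicate because the term $\log|F^i_\alpha A^{\alpha\beta}F^j_\beta|$ must first be split as in \eqref{eq:Helmholtzfreeenergy2} into $2\log(\rho/\hat\rho) - \log|A^{\alpha\beta}|$ so that its contribution to $\partial_{F^i_\alpha}\psi$ reduces cleanly to $-2k_B\theta\,[F^{-1}]^\alpha_i$ and combines neatly with the pressure coming from $e_0$, leaving precisely the bracket of \eqref{eq:newucm_material} and the dumbbell dissipation $-(4/\xi)\Dcal\circ\bphi_t$ on the right-hand side.
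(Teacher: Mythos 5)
Your proposal follows the same route as the paper: transport the spatial system \eqref{eq:newucm_spatial} and the energy balance \eqref{eq:energy_spatial} to material coordinates via the Piola identities, exactly as in polyconvex elastodynamics, with the $A^{\alpha\beta}$ line reducing to \eqref{eq:ode}. The paper dismisses all this as ``straightforwardly derived,'' whereas you spell out the Piola transform of the Cauchy stress and the need for the split form \eqref{eq:Helmholtzfreeenergy2} when differentiating $\psi$ — both correct and helpful details, but not a different argument.
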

\begin{proof}
Recalling Piola identities \eqref{eq:2Dpiola_identity}, the system \eqref{eq:newucm_material} and the additional law \eqref{eq:energy_material}
for $u^i,A^{\alpha\beta},F^i_\alpha$ and $E /\rho$ as functions of $t,\ba$ are straightfrowardly derived from
\eqref{eq:newucm_spatial} and \eqref{eq:energy_spatial}
for $u^i,A^{\alpha\beta},F^i_\alpha$ and $E /\rho$ as functions of $t,\bx=\bphi_t(\ba)$.
\end{proof}
%
When $A^{\alpha\beta}$ is time-independent ($\lambda\gg1$), the stress in \eqref{eq:newucm_material} is the sum of
$$ 
\left[ - (p/\rho + 2 k_B \theta) [F^{-1}]^\alpha_i + 2 K \Hcal' A^{\alpha\beta} F^i_\beta \right]\circ\bphi_t
$$
i.e. Piola-Kirchhoff stress $S^i_\alpha$ for neo-Hookean materials, plus an additional 
term 
to account for volumetric changes, recall section~\ref{sec:constitutive}.
More generally, 
the viscoelastic stress \eqref{eq:viscoelasticcomponent} 
can be interpreted as the mean-field approximation \def\Ksf{\mathsf{K}} 
$A^{\alpha\beta}=\EE\left(\Ksf^\alpha_k\Ksf^\beta_k\right)$
of an 
elastoplastic model \cite{Krishnan-Steigmann2014} 
with stochastic flow rule 
\begin{equation}
\label{eq:ito}
[\Ksf^{-1}]^i_\alpha \left( d\Ksf^\alpha_j + u^k \partial_k \Ksf^\alpha_j \right)= - \delta^i_j \frac{2 K \Hcal'}\xi dt + [\Ksf^{-1}]^i_\alpha [F^{-1}]^\alpha_k \sqrt{\frac{2 k_B \theta}{\xi\: d}} d W^k_j(t)
\end{equation}
in Ito notation, using a probability space with expectation 
$\EE$, and $d^2$ Wiener processes 
denoted $W^k_j(t)$, $k,j=1\dots d$.

The 
interpretation of white noise in \eqref{eq:ito} is left to future works,
as well as the comparison with the kinetic theory of dumbbells 
in rheology to establish viscoelastic models \cite{bird-curtiss-armstrong-hassager-1987b}, recall section~\ref{sec:viscoelastic}.
But one can already note here the potential of the new system, with a new metric variable $A^{\alpha\beta}$ 
to unify various physically-relevant extensions of elastodynamics towards inelastic bodies. 
In particular, UCM can be interpreted from the elastoplastic viewpoint 
with \eqref{eq:ito}, as a \emph{rate-dependent} flow rule which models Newtonian viscous ``fluid inelasticities'' when $\lambda\ll1$.
%
Reciprocally, the standard rate-independent elastoplastic flow rules can be interpreted from the viscoelastic viewpoint
as yielding materials with permanently-fading memory.
Variations of the relaxation limit of $\bA$ to model various 
inelasticities (i.e. rheologies) as extensions of polyconvex elastodynamics will be investigated in future works.
Here, we focus on viscoelasticity.

\section{Application to geophysical 
water flows} 
\label{sec:2D}

Numerous geophysical flows are hardly-compressible \emph{shallow} gravity flows with a free surface, well 
described by the 
two-dimensional (2D) shallow-water equations 
attributed to Saint-Venant \cite{saint-venant-1871} for many purposes. 
For instance, the Saint-Venant systems usually forecast well 
river floods, in particular when the equations 
are non-diffusive and local \cite{chow-1959}. 
However, for some 
hydraulic applications, 
it is still unsure how to 
account for 
viscous effects like large 
vortices and recirculation zones. 
We next show that, in the frame of free-boundary flows, our compressible UCM formulation can serve such a purpose
without introducing diffusion and losing the local character of useful Saint-Venant equations,
after reduction \emph{\`a la Saint-Venant} to a \emph{viscoelastic} shallow-water system 
that generalizes the usual 
shallow-water systems. 
But first, let us recall in subsection \ref{sec:saintvenant} the standard Saint-Venant equations with and without diffusion (of velocity). 

\subsection{Standard Saint-Venant 
models 
for shallow 
water flows}
\label{sec:saintvenant}

Let us equip the Euclidean ambiant space 
with 
Cartesian 
coordinates $(\be_x,\be_y,\be_z)$ 
so 
$(f^x,f^y,f^z):=(0,0,-g)$ is a constant gravity field with magnitude $g$.

We consider a fluid 
filling 
$\Dcal_t:=\{z^b(x,y)<z<z^b(x,y)+H(t,x,y)\}$ 
supposedly 
a smooth 
layer with surface of outward unit normal 
$$
\bn_{z^b+H} = \frac1{\sqrt{1+|\grad_H (z^b+H)|^2}} \left(-\partial_x(z^b+H),-\partial_y(z^b+H),1\right) 
$$
where $\grad_H = (\partial_x,\partial_y)$ is the gradient associated with horizontal divergence $\div_H$. 

The fluid flow is assumed governed by the \emph{reduced} spatial 
description \eqref{eq:firstorder_spatial_reduced} in the moving layer $\Dcal_t$ (as usual for fluids) and by the so-called kinematic condition
\begin{equation}
\label{eq:kinematic}
\partial_t H + u^x \partial_x (z^b+H) + u^y \partial_y(z^b+H) = u^z \sqrt{1+|\grad_H (z^b+H)|^2}
\end{equation}
at $z=z^b+H$. 
Then, along with the free-surface 
condition
\begin{equation}
\label{eq:dynamic}
\quad \sigma^{ij}n_{z^b+H}^jn_{z^b+H}^i = 0 \,,
\end{equation}
some 
constitutive assumptions for 
the fluid 
are known to close the 
3D evolution system \eqref{eq:kinematic}--\eqref{eq:firstorder_spatial_reduced}.
For instance, if the fluid is incompressible -- which gives a special meaning to $p$ in \eqref{eq:extra} --, with Newtonian extra-stress \eqref{eq:extranewt}, 
then one can define unique solutions to Cauchy 
problems for 
\eqref{eq:kinematic}--\eqref{eq:firstorder_spatial_reduced}
on requiring impermeability 
$\bu\cdot\bn_{z^b}=0$ at $z=z^b$, 
plus Navier friction conditions 
at $z=z^b,z^b+H$
\begin{equation}
\label{eq:navier}
\sigma^{ij}n^j_z - (\sigma^{kj}n^j_z n^k_z) n^i_z 
= - k_z \left( u^i - (u^jn_z^j)n_z^i \right)
\end{equation}
with $k_{z^b+H}=0$ 
(pure slip at free surface) 
and $k_b\ge 0$ 
(dissipation at bottom). 
But the 
incompressible Navier-Stokes free-surface model 
is barely tractable for numerical applications, 
let alone the propagation of information at infinite speed.
For the computation of 
hardly-compressible thin-layer (i.e. shallow) geophysical water flows 
with uniform mass density $\rho>0$, 
one often prefers 
a 
2D model reduced after Saint-Venant \cite{saint-venant-1871}, moreover local. Indeed, let us recall:
\begin{proposition}
\label{prop:depthaverage}
Given a family $z^b_\epsilon$, $\epsilon\to0^+$ of smooth topographies,
assume there exist 
bounded regular solutions $H_\epsilon$, $\rho_\epsilon$, $\bu_\epsilon$, $p_\epsilon$ 
$\tau_\epsilon$ 
to \eqref{eq:kinematic}--\eqref{eq:firstorder_spatial_reduced}--\eqref{eq:extra}--\eqref{eq:dynamic}--\eqref{eq:navier}
for $(t,x,y)\in[0,T)\times\R\times\R$, $z\in(z^b_\epsilon,z^b_\epsilon+H_\epsilon)$
such that $X_\epsilon=X_0+\epsilon X_1+O(\epsilon^2)$ holds pointwise 
for 
$X\in\{z^b,H,\rho,\bu,p,\tau\}$ as well as 
\begin{itemize}
 \item $\grad_H z^b_\epsilon = O(\epsilon) = H_\epsilon$, i.e. $X_\epsilon=\epsilon X_1+O(\epsilon^2)$ for $X\in\{b,H\}$ 
 \item $\rho_0$ is constant 
 for all $t,x,y$ and $z^b_\epsilon<z<z^b_\epsilon+H_\epsilon$, hence 
\begin{equation}
\label{eq:nearincompressibility}
\div\bu_\epsilon = O(\epsilon) \text{ as } \epsilon\to0
\end{equation}
 \item at $z=z^b_\epsilon$, $\bu_\epsilon\cdot\bn_{z^b_\epsilon}=0$ and \eqref{eq:navier} with $k_{z^b_\epsilon}=O(\epsilon)$ 
 \item at $z=z^b_\epsilon+H_\epsilon$, \eqref{eq:dynamic} and  \eqref{eq:navier} with $k_{z^b_\epsilon+H_\epsilon}=O(\epsilon^2)$.
\end{itemize}

Then, denoting $\bu^H=(u^x,u^y)$, it holds 
\begin{align}
\label{eq:massconservationeps}
& \partial_t H_\epsilon + \div_H( H_\epsilon \bU_\epsilon ) = O(\epsilon^2) 
\,,
\\
\nonumber 
& \partial_t(H_\epsilon\bU_\epsilon) + \div_H\left( \int_{z^b_\epsilon}^{z^b_\epsilon+H_\epsilon}dz\; \bu^H_\epsilon \otimes \bu^H_\epsilon \right) 
= O(\epsilon^3)
\\
\label{eq:momentumconservationeps}
& \quad + \div_H(H_\epsilon (\Sigma^{zz}_\epsilon\bI -\bSigma^H_\epsilon) )
- g H_\epsilon \grad_H (z^b_\epsilon+H_\epsilon) - \bu^H_\epsilon\:k_{z^b_\epsilon}/\rho_0
\,,
\end{align}
where 
$ 
 \bSigma^H_\epsilon = \Sigma^{xx}_\epsilon\be_x\otimes\be_x + \Sigma^{yy}_\epsilon \be_y\otimes\be_y + \Sigma^{xy}_\epsilon \be_x\otimes\be_y + \Sigma^{yx}_\epsilon \be_y\otimes\be_x$, 
$\Sigma^{zz}_\epsilon$ and 
$\bU_\epsilon(t,x,y)=U^x_\epsilon(t,x,y)\be_x+U^y_\epsilon(t,x,y)\be_y$ 
are defined by
\begin{equation}
\label{eq:depthaveragedeps}
U^i_\epsilon = \frac1{H_\epsilon}\int_{z^b_\epsilon}^{z^b_\epsilon+H_\epsilon} dz\; u^i_\epsilon 
\qquad 
\Sigma^{ij}_\epsilon = \frac1{\rho_0} \frac1{H_\epsilon}\int_{z^b_\epsilon}^{z^b_\epsilon+H_\epsilon} dz\; \tau^{ij}_\epsilon \,.
\end{equation}
\end{proposition}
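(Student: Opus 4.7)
The plan is to depth-integrate the 3D system \eqref{eq:kinematic}--\eqref{eq:firstorder_spatial_reduced}--\eqref{eq:extra}--\eqref{eq:dynamic}--\eqref{eq:navier} between $z=z^b_\epsilon$ and $z=z^b_\epsilon+H_\epsilon$, systematically applying the Leibniz rule to commute time and horizontal derivatives with the $z$-integral, and to account for $\epsilon$-orders using the hypotheses $H_\epsilon=O(\epsilon)$, $\grad_H z^b_\epsilon=O(\epsilon)$, $\div\bu_\epsilon=O(\epsilon)$, and $\rho_\epsilon=\rho_0+O(\epsilon)$ with $\rho_0$ constant.

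\textbf{Step 1 (mass balance).} I would first integrate the continuity equation $\partial_t\rho_\epsilon+\div(\rho_\epsilon\bu_\epsilon)=0$ in $z$. Since $\rho_\epsilon=\rho_0+O(\epsilon)$ with $\rho_0$ constant, this reduces to $\div\bu_\epsilon=O(\epsilon)$, i.e.\ $\int_{z^b_\epsilon}^{z^b_\epsilon+H_\epsilon}\div\bu_\epsilon\,dz = H_\epsilon\cdot O(\epsilon)=O(\epsilon^2)$. Applying Leibniz to the $z$-integral of $\partial_z u^z_\epsilon$ generates boundary traces at $z=z^b_\epsilon$ and $z=z^b_\epsilon+H_\epsilon$; combining these with the kinematic condition \eqref{eq:kinematic} at the free surface and the impermeability condition at the bottom eliminates them in favour of $\partial_t H_\epsilon+\div_H(H_\epsilon\bU_\epsilon)$, yielding \eqref{eq:massconservationeps}.

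\textbf{Step 2 (hydrostatic closure).} The next key ingredient is to derive the pressure profile from the \emph{vertical} component of \eqref{eq:firstorder_spatial_reduced}. Under the scalings assumed, the vertical acceleration is $O(\epsilon)$, so the leading balance reads $\partial_z p_\epsilon=-\rho_0 g+\partial_z \tau^{zz}_\epsilon+O(\epsilon)$. Integrating from an arbitrary $z$ up to $z^b_\epsilon+H_\epsilon$, and using the free-surface condition \eqref{eq:dynamic} with outward normal $\bn_{z^b+H}=\be_z+O(\epsilon)$ to conclude $p_\epsilon(z^b_\epsilon+H_\epsilon)=\tau^{zz}_\epsilon(z^b_\epsilon+H_\epsilon)+O(\epsilon^2)$, one obtains $p_\epsilon(z)=\rho_0 g(z^b_\epsilon+H_\epsilon-z)+\tau^{zz}_\epsilon(z)+O(\epsilon^2)$. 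This is the closure that feeds the horizontal momentum equations.

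\textbf{Step 3 (horizontal momentum balance).} I would then integrate in $z$ the horizontal components ($i=x,y$) of $\partial_t(\rho_\epsilon u^i_\epsilon)+\partial_j(\rho_\epsilon u^i_\epsilon u^j_\epsilon)-\partial_j\sigma^{ij}_\epsilon=\rho_\epsilon f^i_\epsilon$. The convective terms produce $\partial_t(H_\epsilon\bU_\epsilon)+\div_H\int\bu^H_\epsilon\otimes\bu^H_\epsilon\,dz$ plus Leibniz boundary traces; those traces cancel exactly by the same combination of kinematic and impermeability conditions used in Step 1 (this is the standard cancellation for depth-averaged conservation laws). The stress divergence $\int\partial_j\sigma^{ij}_\epsilon\,dz$ yields, after Leibniz, a horizontal divergence $\div_H\int\bsigma^{iH}_\epsilon\,dz$ together with boundary contributions $[\sigma^{iz}_\epsilon-\sigma^{ij}_\epsilon\partial_j(z^b_\epsilon+H_\epsilon)]$ at the surface and the corresponding bottom trace. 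The surface bracket vanishes up to $O(\epsilon^3)$ by the free-surface condition \eqref{eq:dynamic}, while the bottom bracket gives the friction term $-k_{z^b_\epsilon}\bu^H_\epsilon/\rho_0$ via \eqref{eq:navier} (the tangential/normal decomposition and the assumed $k_{z^b_\epsilon}=O(\epsilon)$, $k_{z^b_\epsilon+H_\epsilon}=O(\epsilon^2)$ precisely control the two orders of error in \eqref{eq:momentumconservationeps}). Substituting the hydrostatic closure of Step 2 into $\int\sigma^{ij}_\epsilon\,dz=\int(-p_\epsilon\delta^{ij}+\tau^{ij}_\epsilon)\,dz$ produces the hydrostatic pressure contribution $-\tfrac12 g H_\epsilon^2\delta^{ij}\rho_0$ -- whose divergence, after adding the bottom-slope source from the Leibniz boundary traces of pressure, regroups exactly as $-gH_\epsilon\grad_H(z^b_\epsilon+H_\epsilon)$ -- and a residual deviatoric contribution which, after dividing by $\rho_0$, reads $\div_H(H_\epsilon(\Sigma^{zz}_\epsilon\bI-\bSigma^H_\epsilon))$ in the averaged notation \eqref{eq:depthaveragedeps}.

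\textbf{Main obstacle and bookkeeping.} The arithmetic is routine but the delicate point is the \emph{order tracking}: each horizontal flux comes with an integrand of magnitude $O(1)$ over a layer of thickness $O(\epsilon)$, so one must be sure that every discarded contribution is $O(\epsilon^2)$ in the mass equation and $O(\epsilon^3)$ in the momentum equation. The two places where this is non-trivial are (i) the upper-boundary traces of the stress and of the Navier term, which require the free-surface normal to be computed to order $\epsilon^2$ together with $k_{z^b_\epsilon+H_\epsilon}=O(\epsilon^2)$, and (ii) the replacement of $\rho_\epsilon$ by $\rho_0$ under the integrals, which is licit only because $\rho_\epsilon-\rho_0=O(\epsilon)$ is multiplied by quantities that are themselves at most $O(1)$ over a layer of thickness $O(\epsilon)$. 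Once these two points are handled, collecting terms yields \eqref{eq:massconservationeps}--\eqref{eq:momentumconservationeps} with the stated remainders.
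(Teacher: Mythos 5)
Your three-step plan --- depth-integrate the mass balance via Leibniz and the kinematic/impermeability conditions, derive the hydrostatic closure $p_\epsilon-\tau^{zz}_\epsilon\approx\rho_0 g(z^b_\epsilon+H_\epsilon-z)$ from the vertical momentum balance together with \eqref{eq:dynamic}, then depth-average the horizontal momentum with the Navier traces --- is exactly the route the paper takes, and your bookkeeping of orders is the right thing to track. The one intermediate step the paper makes explicit that you leave implicit: before writing $\partial_z p_\epsilon=-\rho_0 g+\partial_z\tau^{zz}_\epsilon+O(\epsilon)$ you must first establish $\tau^{xz}_\epsilon,\tau^{yz}_\epsilon=O(\epsilon)$ \emph{throughout the layer} (obtained at $z=z^b_\epsilon+H_\epsilon$ from \eqref{eq:navier} with $k_{z^b_\epsilon+H_\epsilon}=O(\epsilon^2)$, then propagated downward by integrating $\partial_z\tau^{iz}_\epsilon$ in the horizontal momentum balance over the $O(\epsilon)$-thick layer), since this is what controls the horizontal shear-stress divergences $\partial_x\tau^{zx}_\epsilon+\partial_y\tau^{zy}_\epsilon$ that you absorb into the $O(\epsilon)$ remainder of the vertical balance.
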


Prop.~\ref{prop:depthaverage} rephrases a result that can be found in many places, see \cite{ferrari-saleri-2004,bouchut-boyaval-2015}
and references therein.
But we 
briefly recall its proof below 
for future reference. 

\begin{proof} The proof classically consists in three main steps: 
\begin{enumerate}
 \item 
 $\grad_H z^b_\epsilon = O(\epsilon) = H_\epsilon$ 
 first imply $u^z_\epsilon=O(\epsilon)$, 
 at $z=z^b_\epsilon+H_\epsilon$ with \eqref{eq:kinematic} or
 at $z=z^b_\epsilon$ with impermeability and 
 in the whole layer by \eqref{eq:nearincompressibility},
 then \eqref{eq:massconservationeps}, 
 \item \eqref{eq:navier} 
 first imply $\tau^{xz}_\epsilon,\tau^{yz}_\epsilon=O(\epsilon)$,
 at $z=z^b_\epsilon+H_\epsilon$ with $k_{z^b_\epsilon+H_\epsilon}=O(\epsilon^2)$
 and 
 in the whole layer by the horizontal momentum balance 
\beq
\label{eq:momentum}
\rho_0(\partial_t+u^j_\epsilon\partial_j)u^i_\epsilon+\partial_ip_\epsilon-\partial_j\tau^{ij}_\epsilon=\rho_0 f^i
\eeq
 with $i\in\{x,y\}$, 
 then $p_\epsilon-\tau_\epsilon=\rho_0 g(z^b_\epsilon+H_\epsilon-z)+O(\epsilon)$ 
 with \eqref{eq:momentum} for $i=z$ 
 and \eqref{eq:dynamic} i.e. $p_\epsilon-\tau_\epsilon^{zz}=O(\epsilon^2)$ at $z=z^b_\epsilon+H_\epsilon$
 \item  Depth-averageing the 
horizontal momentum balance \eqref{eq:momentum} for $i\in\{x,y\}$ with 
\eqref{eq:navier} and \eqref{eq:massconservationeps} 
yields \eqref{eq:momentumconservationeps}.
\end{enumerate}
\end{proof}

Given Prop.~\ref{prop:depthaverage}, the next step is to infer a 2D 
model 
of evolution form 
\begin{align}
\label{eq:massconservation}
& \partial_t H + \div_H( H \bU ) = 0
\\
& \partial_t(H \bU) + \div_H\left( H \bU \otimes \bU \right) = \nonumber 
\\
\label{eq:momentumconservation}
& \quad + \div_H(H (\Sigma^{zz}\bI -\bSigma^H) ) - g H \grad_H (z_b + H) - k H \bU
\end{align}
that is \emph{closed} (with equations for 
the friction parameter $k>0$ and stresses) 
so that it can be used for fast and simple predictions of 
free-surface gravity flows. 

\begin{corollary}
\label{cor:depthaverage}
Assume the family of solutions 
in Prop.~\ref{prop:depthaverage} also satisfies
\begin{equation}
\label{eq:dispersioneps} 
\int_{z^b_\epsilon}^{z^b_\epsilon+H_\epsilon}dz\; \bu^H_\epsilon = H_\epsilon \bu^H_\epsilon + O(\epsilon^2)
\end{equation}
and ${\bu^H_\epsilon k_{z^b_\epsilon}}/{\rho_0} = k H_\epsilon \bU_\epsilon + O(\epsilon^2)$
then, for small $\epsilon$, 
$H_1$ can be approximated 
by a solution $H\approx H_1$, $\bU\approx \bu^H_0$ to the 
Saint-Venant system \eqref{eq:massconservation}--\eqref{eq:momentumconservation} where 
\begin{itemize}
 \item[a)] $\Sigma^{zz}\bI = \bSigma^H$,\ 
  if $\tau_\epsilon^{iz}=O(\epsilon^2)=\tau_\epsilon^{ii}-\tau_\epsilon^{zz}$ for $i\in\{x,y\}$;
 \item[b)] $\Sigma^{zz}=-\nu_\epsilon(\partial_xU^x+\partial_yU^y)$,
  $\Sigma^{ij}=\nu_\epsilon(\partial_iU^j+\partial_jU^i)$ for $i,j\in\{x,y\}$,\\ 
  \smallskip
  if $\tau_\epsilon^{ij}=\nu_\epsilon\partial_j u_\epsilon^i$ with $\nu_\epsilon=O(\epsilon)$ for $i,j\in\{x,y,z\}$.
\end{itemize}
\end{corollary}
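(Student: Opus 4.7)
The strategy is to close the nonlinear terms of the approximate system \eqref{eq:massconservationeps}--\eqref{eq:momentumconservationeps} obtained in Prop.~\ref{prop:depthaverage}, at the same order $O(\epsilon^2)$ as the residual already present, and then identify the leading-order profile with a solution $(H,\bU)\approx(H_1,\bu^H_0)$ of \eqref{eq:massconservation}--\eqref{eq:momentumconservation}. The mass balance \eqref{eq:massconservation} is immediate from \eqref{eq:massconservationeps}. For the momentum balance, the friction hypothesis $\bu^H_\epsilon k_{z^b_\epsilon}/\rho_0 = k H_\epsilon \bU_\epsilon + O(\epsilon^2)$ is substituted directly. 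The advective closure uses \eqref{eq:dispersioneps} which, via the definition \eqref{eq:depthaveragedeps} of $\bU_\epsilon$, is equivalent to $\bu^H_\epsilon(t,x,y,z)=\bU_\epsilon(t,x,y)+O(\epsilon)$ pointwise in $z\in(z^b_\epsilon,z^b_\epsilon+H_\epsilon)$; tensoring and integrating over the layer then gives $\int_{z^b_\epsilon}^{z^b_\epsilon+H_\epsilon}\bu^H_\epsilon\otimes\bu^H_\epsilon\,dz = H_\epsilon\bU_\epsilon\otimes\bU_\epsilon + O(\epsilon^3)$, so the advective divergence in \eqref{eq:momentumconservationeps} becomes $\div_H(H_\epsilon\bU_\epsilon\otimes\bU_\epsilon)$ up to the admissible error.

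It remains to close the stress divergence $\div_H(H_\epsilon(\Sigma^{zz}_\epsilon\bI-\bSigma^H_\epsilon))$, case-by-case. In case (a), the hypotheses $\tau^{iz}_\epsilon=O(\epsilon^2)$ and $\tau^{ii}_\epsilon-\tau^{zz}_\epsilon=O(\epsilon^2)$ for $i\in\{x,y\}$, together with the implicit smallness of the horizontal shear components that makes $\bSigma^H_\epsilon$ isotropic, propagate through \eqref{eq:depthaveragedeps} to yield $\bSigma^H_\epsilon = \Sigma^{zz}_\epsilon\bI + O(\epsilon^2)$; the whole stress divergence is then $O(\epsilon^3)$ and disappears at leading order, or equivalently one imposes $\Sigma^{zz}\bI=\bSigma^H$ in the closed model. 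In case (b), step~1 of the proof of Prop.~\ref{prop:depthaverage} and near-incompressibility \eqref{eq:nearincompressibility} first give $u^z_\epsilon=O(\epsilon)$ and $\partial_z u^z_\epsilon = -(\partial_x u^x_\epsilon + \partial_y u^y_\epsilon) + O(\epsilon)$. Substituting $\tau^{ij}_\epsilon=\nu_\epsilon\partial_j u^i_\epsilon$ with $\nu_\epsilon=O(\epsilon)$ into \eqref{eq:depthaveragedeps} and using the columnar profile $\bu^H_\epsilon=\bU_\epsilon+O(\epsilon)$ then give $\Sigma^{ij}_\epsilon = (\nu_\epsilon/\rho_0)\partial_j U^i_\epsilon + O(\epsilon^2)$ for $i,j\in\{x,y\}$ and $\Sigma^{zz}_\epsilon = -(\nu_\epsilon/\rho_0)(\partial_x U^x_\epsilon+\partial_y U^y_\epsilon) + O(\epsilon^2)$; the symmetric form $\nu_\epsilon(\partial_i U^j+\partial_j U^i)$ announced in the corollary is obtained on symmetrizing, which is legitimate at the level of $\div_H(H_\epsilon\bSigma^H_\epsilon)$ since the antisymmetric contribution reduces to a gradient of $\nu_\epsilon\,\div_H\bU_\epsilon$ that is already accounted for by the isotropic part $\Sigma^{zz}\bI$ through near-incompressibility.

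The main obstacle is the bookkeeping of $O(\epsilon)$ remainders in case (b): the viscosity $\nu_\epsilon=O(\epsilon)$ is small and has to contribute at the correct leading Saint-Venant order once multiplied by the thin depth $H_\epsilon=O(\epsilon)$, the balance $\nu_\epsilon/H_\epsilon=O(1)$ being precisely what makes the closure non-trivial; case (a) is comparatively mild, its only subtlety being the tacit isotropy of the horizontal shear that must be read into the two stated $O(\epsilon^2)$ bounds in order to conclude $\bSigma^H_\epsilon=\Sigma^{zz}_\epsilon\bI+O(\epsilon^2)$.
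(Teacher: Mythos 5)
Your proof takes the same essential route as the paper's: the only non-trivial closure is the advective term, and it is handled by observing that \eqref{eq:dispersioneps} implies $\int_{z^b_\epsilon}^{z^b_\epsilon+H_\epsilon}\bu^H_\epsilon\otimes\bu^H_\epsilon\,dz = H_\epsilon\bU_\epsilon\otimes\bU_\epsilon+O(\epsilon^3)$. The paper's own proof is exactly this one-line computation plus a remark that if $\lim_{\epsilon\to0}\bu^H_\epsilon=0$ at $z=z^b_\epsilon$ then under \eqref{eq:dispersioneps} the whole leading velocity vanishes and the friction coefficient $k$ is undetermined but irrelevant; the stress closures (a) and (b) are left implicit. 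So you are not off-route, and the extra detail you supply on the stress terms is genuinely useful.

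One quantitative step deserves care. You derive $\bu^H_\epsilon=\bU_\epsilon+O(\epsilon)$ pointwise from \eqref{eq:dispersioneps} (which is correct, since $H_\epsilon\bU_\epsilon=H_\epsilon\bu^H_\epsilon+O(\epsilon^2)$ and $H_\epsilon=O(\epsilon)$), and then say "tensoring and integrating" gives the $O(\epsilon^3)$ estimate. Tensoring two pointwise $O(\epsilon)$ errors and integrating over a layer of thickness $O(\epsilon)$ only controls the quadratic fluctuation to $O(\epsilon^3)$; the cross terms $\bU_\epsilon\otimes(\bu^H_\epsilon-\bU_\epsilon)$ are a priori $O(\epsilon)\cdot O(\epsilon)=O(\epsilon^2)$ after integration. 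What saves the estimate is that $\int_{z^b_\epsilon}^{z^b_\epsilon+H_\epsilon}(\bu^H_\epsilon-\bU_\epsilon)\,dz=0$ exactly, by the definition \eqref{eq:depthaveragedeps} of $\bU_\epsilon$ as the depth average, so those cross contributions vanish identically. You should state this cancellation explicitly rather than hide it in "tensoring and integrating."

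Your observations that case (a) tacitly requires $\tau^{xy}_\epsilon=O(\epsilon^2)$ as well (otherwise $\bSigma^H_\epsilon$ need not be isotropic), and that case (b) passes from the non-symmetric $\nu_\epsilon\partial_jU^i$ to the symmetrized form by absorbing an extra gradient into the isotropic part, correctly identify gaps that the corollary's statement (and the paper's two-sentence proof) glide over. The one item from the paper's proof you omit is the remark about the degenerate limit $\bu^H_0=0$ and the consequent indeterminacy of $k$; since you substitute the friction closure directly, it would be worth noting, as the paper does, that when the bottom velocity vanishes in the limit, the whole velocity field vanishes under \eqref{eq:dispersioneps} and $k$ can be chosen freely without affecting the leading-order system.
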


\begin{proof} To show Cor.~\ref{cor:depthaverage} starting from Prop.~\ref{cor:depthaverage}, 
see that \eqref{eq:dispersioneps} implies
$$
\int_{z^b_\epsilon}^{z^b_\epsilon+H_\epsilon}dz\; \bu^H_\epsilon \otimes \bu^H_\epsilon 
= H_\epsilon \bU_\epsilon\otimes \bU_\epsilon + O(\epsilon^3)
$$
and note that $\bu^H_0=0$ if $\lim_{\epsilon\to0}\bu^H_\epsilon=0$ at $z=z^b_\epsilon$ under \eqref{eq:dispersioneps},
so the value of $K$ is undetermined but useless then. 
\end{proof}


So the flows of \emph{slightly viscous} fluids with (quasi-)Newtonian extra-stress (case b) 
could be approximated through 
a \emph{diffusive} Saint-Venant system \eqref{eq:massconservation}--\eqref{eq:momentumconservation} 
where $k=k_{z^b_\epsilon}(1-Hk_{z^b_\epsilon}/3\nu_\epsilon)$,
or 
the \emph{non-diffusive} 
limit system with $k=k_{z^b_\epsilon}$ when the extra-stress is negligible (case a). 
In any case, the 2D system admits smooth causal solutions to 
Cauchy problems 
that preserve $H\ge0$ and satisfy 
\begin{multline}
\label{eq:energyconservation} 
\partial_t\left( H\left(\frac12|\bU|^2 + \frac12gH + g z_b \right) \right) 
\\
+ \div_H\left( H\left(\frac12|\bU|^2 + gH + g z_b +\Sigma^{zz} \right)\bU - H\bSigma^H\cdot\bU \right) 
\\ = -kH |\bU|^2 - HD 
\end{multline}
with 
$D=2\nu_\epsilon\left(|\grad_H\bU+\grad_H\bU^T|^2/4+2|\div_H\bU|^2\right)\ge0$, 
indeed.

But the latter 2D flows suffer the same problems as their 3D counterparts.
The diffusive shallow-water system is a 2D version of damped Navier-Stokes equations,
with a tensor 
viscosity as diffusion coefficients: it does not produce \emph{local} causal motions.
And the non-diffusive shallow-water system exactly coincides with a 2D version of the Euler equations with damping $K$,
for polytropic fluids with $e_H=gH/2$ and 
energy $E \equiv \frac{H}2 \left( |\bU|^2 + e_H \right)$: 
it lacks viscosity to control 
vortices. 
Then, the same question arises as in the full 3D framework: could causal motions also be \emph{local} using a viscoelastic 2D flow model ? 

\subsection{Viscoelastic Saint-Venant models with UCM fluids}
\label{sec:maxwell}

Viscoelastic shallow-water models have been proposed in the literature, but we are not aware of 
2D models with well-posed Cauchy problems.
For instance,
to close 
\eqref{eq:massconservation}--\eqref{eq:momentumconservation}
with Maxwell equations for Cauchy stress, we proposed in \cite{bouchut-boyaval-2015}:
\begin{align}
\nonumber
& (\partial_t + U^x\partial_x + U^y\partial_y) \bSigma^H - (\grad_H\bU) \bSigma^H - \bSigma^H (\grad_H\bU)^T \\
& \quad = ( \nu_\epsilon (\grad_H\bU + \grad_H\bU^T)-\bSigma^H)/\lambda
\label{eq:Sigmah}
\\
\label{eq:Sigmazz}
& (\partial_t + U^x\partial_x + U^y\partial_y) \Sigma^{zz} + 2 (\div_H\bU) \Sigma^{zz} = (-2\nu_\epsilon\div_H\bU-\Sigma^{zz})/ \lambda
\end{align}
for small $\epsilon$ and a 
$\lambda>0$ given,
which for UCM 
is the natural 2D generalization of our 1D viscoelastic Saint-Venant model \cite{bouchut-boyaval-2013}.
But similarly to the standard system for 3D flows of 
UCM fluids, 
the quasilinear 2D system
\eqref{eq:massconservation}--\eqref{eq:momentumconservation}--\eqref{eq:Sigmah}--\eqref{eq:Sigmazz}
lacks additional structure such as symmetric hyperbolicity
and we do not know how to define 
solutions to Cauchy problems with that system.

On the contrary, we show in the sequel that the \emph{new} 3D (compressible) UCM model 
of Section~\ref{sec:model} can be used to derive a \emph{symmetric-hyperbolic} 
viscoelastic 2D Saint-Venant model with UCM fluids,
having \eqref{eq:massconservation}--\eqref{eq:momentumconservation}--\eqref{eq:Sigmah}--\eqref{eq:Sigmazz}
as a 
subsystem.
To that aim, we first revise Prop.~\ref{prop:depthaverage} with assumptions allowing to depth-average all equations in \eqref{eq:firstorder_spatial},
guided by the interest 
of~\eqref{eq:dispersioneps} for closure.

\begin{proposition}
\label{prop:depthaverage2}
Given a family $z^b_\epsilon$, $\epsilon\to0^+$ of smooth topographies,
assume there exist 
bounded regular solutions $H_\epsilon$, $\bF_\epsilon$, 
$\bu_\epsilon$, $\tau_\epsilon$ 
to 
\eqref{eq:kinematic}--\eqref{eq:firstorder_spatial}--\eqref{eq:dynamic}--\eqref{eq:navier}
in $(t,x,y)\in[0,T)\times\R\times\R$, $z\in(z^b_\epsilon,z^b_\epsilon+H_\epsilon)$
that define the motions of fluid layers with reference configurations $\{c\in(0,\epsilon)\}$ in a Cartesian frame $(\be_a,\be_b,\be_c)$,
such that $X_\epsilon=X_0+\epsilon X_1+O(\epsilon^2)$ holds pointwise 
for 
$X\in\{b,H,\bF,\bu,p,\tau\}$ as well as 
\begin{itemize}
 \item $\grad_H z^b_\epsilon = O(\epsilon) = H_\epsilon$, which means $X_\epsilon=\epsilon X_1+O(\epsilon^2)$ for $X\in\{b,H\}$ 
 \item $|\bF_0|=1$ 
 for all $t,x,y$ and $z^b_\epsilon<z<z^b_\epsilon+H_\epsilon$, hence 
it holds \eqref{eq:nearincompressibility}
 \item at $z=z^b_\epsilon$, $\bu_\epsilon\cdot\bn_{z^b_\epsilon}=0$ and \eqref{eq:navier} with $k_{z^b_\epsilon}=O(\epsilon)$ 
 \item at $z=z^b_\epsilon+H_\epsilon$, \eqref{eq:dynamic} and  \eqref{eq:navier} with $k_{z^b_\epsilon+H_\epsilon}=O(\epsilon^2)$
 \item $\partial_z \bu^H=O(\epsilon)$ 
 hence~\eqref{eq:dispersioneps} and
 \item at $t=0$, 
 $F^i_{\epsilon,c} =O(\epsilon)= F^z_{\epsilon,\alpha}$ for $i\in\{x,y\},\alpha\in\{a,b\}$, and for 
 ${\hat H}>0$
\begin{equation}
\label{eq:initialcompat} 
H_1/{\hat H}=F^z_{0,c} \equiv |\bF^H_0|^{-1} \,.
\end{equation}
\end{itemize}

Then, as $\epsilon\to0$, $H_\epsilon,\bU_\epsilon,\bSigma^H_\epsilon,\Sigma^{zz}_\epsilon$ defined as in \eqref{eq:depthaveragedeps} and
$$ 
\bF^H_\epsilon = \sum_{i\in\{x,y\},\alpha\in\{a,b\}} \left( \frac1{H_\epsilon} \int_{b_\epsilon}^{b_\epsilon+H_\epsilon} dz F^i_{\alpha} \right) \be_i\otimes\be_\alpha
$$
can be approximated by $H=H_1\equiv{\hat H}F^z_{0,c}$, $\bU=\bu^H_0$, $\bSigma^H=\bSigma^H_0$, $\Sigma^{zz}=\Sigma^{zz}_0$ and $\bF^H = \bF^H_0$ 
solution to \eqref{eq:massconservation}--\eqref{eq:momentumconservation}--\eqref{eq:strain} such that $H={\hat H}|\bF^H|^{-1}$ 
and
\beq
\label{eq:strain}
\partial_t(H\bF^H) + \div_H(H\bU\otimes\bF^H-H\bF^H\otimes\bU) = 0 \,. 
\eeq

Moreover, if the 2D Piola identities \eqref{eq:2Dpiola} hold at $t=0$ 
\begin{equation}
\label{eq:2Dpiola}
{\hat H}\div_H\left(|\bF^H|^{-1}\bF^H\right) \equiv \div_H(H\bF^H) \equiv \partial_i( H F^i_\alpha ) = 0 
\end{equation}
the motions defined by solutions to \eqref{eq:massconservation}--\eqref{eq:momentumconservation}--\eqref{eq:strain} 
have an equivalent 2D Lagrangian description using $\bPhi_t^H$ such that $\partial_t\bPhi_t^H=\bU\circ\bPhi_t^H$, $\grad_H\bPhi_t^H=\bF^H\circ\bPhi_t^H$.
\end{proposition}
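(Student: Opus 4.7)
The plan is to proceed in four main stages, building on Prop.~\ref{prop:depthaverage} and mimicking its depth-averaging proof for the additional variable $\bF^H$.

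First, the mass-conservation law \eqref{eq:massconservation} and the horizontal momentum equation \eqref{eq:momentumconservation} are inherited essentially verbatim from Prop.~\ref{prop:depthaverage}: the new hypotheses $|\bF_0|=1$ and the compatibility at $t=0$ only reinforce the near-incompressibility condition \eqref{eq:nearincompressibility}, and the assumption $\partial_z\bu^H=O(\epsilon)$ yields \eqref{eq:dispersioneps}, so that the quadratic flux $\int \bu^H_\epsilon\otimes\bu^H_\epsilon$ reduces to $H_\epsilon\bU_\epsilon\otimes\bU_\epsilon$ up to $O(\epsilon^3)$, as in Cor.~\ref{cor:depthaverage}.

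Second, the core new step is to depth-average the deformation-gradient equation
$$
\partial_t(\rho F^i_\alpha) + \partial_j\left(\rho u^j F^i_\alpha - \rho F^j_\alpha u^i\right) = 0
$$
from \eqref{eq:firstorder_spatial}, for $i\in\{x,y\}$ and $\alpha\in\{a,b\}$. Using $|\bF_0|=1$ so that $\rho$ is (approximately) constant, and the scaling assumptions $F^i_{\epsilon,c}=O(\epsilon)=F^z_{\epsilon,\alpha}$, $u^z_\epsilon=O(\epsilon)$ (the latter deduced from the kinematic condition \eqref{eq:kinematic} and $H_\epsilon=O(\epsilon)$ as in Prop.~\ref{prop:depthaverage}), the vertical flux $\rho u^z F^i_\alpha - \rho F^z_\alpha u^i$ is $O(\epsilon^2)$, while $\partial_z\bu^H=O(\epsilon)$ and $\partial_z\bF^H=O(\epsilon)$ (a consequence of the evolution equation and the initial scalings) ensure that the horizontal flux factorises at leading order as $H_\epsilon(\bU_\epsilon\otimes\bF^H_\epsilon-\bF^H_\epsilon\otimes\bU_\epsilon)$. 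Integrating in $z\in(z^b_\epsilon,z^b_\epsilon+H_\epsilon)$ and combining with \eqref{eq:massconservationeps} then gives \eqref{eq:strain} at leading order. The only delicate point here is controlling the boundary contributions arising from $z$-integration between moving surfaces; this is handled exactly as in the proof of Prop.~\ref{prop:depthaverage}, using the kinematic condition \eqref{eq:kinematic} and the scaling of $z^b_\epsilon$ to absorb them into the $O(\epsilon^2)$ remainder.

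Third, I would verify the algebraic relation $H=\hat H|\bF^H|^{-1}$. Starting from the initial data assumption \eqref{eq:initialcompat}, I would show that $|\bF^H|^{-1}$ obeys exactly the mass-conservation equation \eqref{eq:massconservation}, by differentiating the identity $\partial_t|\bF^H|=|\bF^H|\,\mathrm{tr}(\bF^{-H}\partial_t\bF^H)$ and substituting from \eqref{eq:strain}, which gives $(\partial_t+\bU\cdot\grad_H)|\bF^H|^{-1}+|\bF^H|^{-1}\div_H\bU=0$. Since $H/\hat H$ satisfies the same transport equation by \eqref{eq:massconservation}, uniqueness of the Cauchy problem for this scalar transport equation yields $H=\hat H|\bF^H|^{-1}$ for all time.

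Finally, the 2D Piola identity $\partial_i(HF^i_\alpha)=0$ is preserved in time: expanding $\partial_t\partial_i(HF^i_\alpha)$ using \eqref{eq:strain}, the commutator of the derivatives and the cancellation $\partial_i(HU^iF^j_\alpha-HF^j_\alpha U^i)=0$ after antisymmetrising in $(i,j)$ yield $\partial_t\partial_i(HF^i_\alpha)=0$, so that the identity propagates from $t=0$. Once this holds, the vector field $\bF^H$ has zero row-divergence in the conservative sense, and the classical argument for polyconvex elastodynamics (see \cite{wagner-2009}) constructs $\bPhi^H_t$ by integrating $\partial_t\bPhi^H_t=\bU\circ\bPhi^H_t$ and $\grad_H\bPhi^H_t=\bF^H\circ\bPhi^H_t$ in a compatible way. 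The main obstacle throughout is the consistent bookkeeping of the $O(\epsilon)$ and $O(\epsilon^2)$ residues generated by the moving free surface when integrating in $z$, but this is entirely analogous to the proof of Prop.~\ref{prop:depthaverage}.
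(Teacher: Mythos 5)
Your stages 1, 2 and 4 follow essentially the same route as the paper's proof: the mass and momentum laws are inherited from Prop.~\ref{prop:depthaverage} and Cor.~\ref{cor:depthaverage}, the deformation--gradient equation in \eqref{eq:firstorder_spatial} is depth-averaged using the propagated scalings $F^i_{\epsilon,c}=O(\epsilon)=F^z_{\epsilon,\alpha}$ (what you call ``a consequence of the evolution equation and the initial scalings'' is exactly the paper's step 1, which uses $\partial_z\bu^H=O(\epsilon)$ and $\partial_H u^z=O(\epsilon)$ together with \eqref{eq:deformationgradient} to keep those off-block entries small for all $t\ge0$), and the propagation of the 2D Piola identity from \eqref{eq:strain} by antisymmetry of the flux in $(i,j)$ is correct and is what the paper invokes.

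Your stage 3, however, has a genuine gap. You claim that \eqref{eq:strain} together with \eqref{eq:massconservation} gives $(\partial_t+\bU\cdot\grad_H)|\bF^H|^{-1}+|\bF^H|^{-1}\div_H\bU=0$, and then conclude $H=\hat H|\bF^H|^{-1}$ by uniqueness of the transport Cauchy problem. But the actual computation yields
\begin{equation*}
(\partial_t+\bU\cdot\grad_H)|\bF^H| \;=\; |\bF^H|\div_H\bU \;+\; H^{-1}\,C^\alpha_i\,U^i\,\partial_j\!\left(H F^j_\alpha\right),
\end{equation*}
with $C^\alpha_i=\sigma_{ij}\sigma_{\alpha\beta}F^j_\beta$ the 2D cofactor; the second term is proportional to $\partial_j(HF^j_\alpha)$ and vanishes only when the Piola identities \eqref{eq:2Dpiola} hold. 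Yet the constraint $H=\hat H|\bF^H|^{-1}$ appears in the main conclusion of the proposition, \emph{before} the ``Moreover'' clause that assumes \eqref{eq:2Dpiola} at $t=0$, so you cannot invoke Piola there. The paper avoids this by arguing upstream of the 2D limit: its step 2 shows that the scalar compatibility $H_1/\hat H=F^z_{0,c}$ in \eqref{eq:initialcompat} is preserved in time by the full 3D system \eqref{eq:firstorder_spatial} under the stated scalings, and its step 3 uses the 3D incompressibility $|\bF_0|=|\bF^H_0|\,F^z_{0,c}=1$ to conclude $F^z_{0,c}=|\bF^H_0|^{-1}$ for all $t\ge0$. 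So the constraint is obtained from the 3D determinant, not from a 2D transport identity for $|\bF^H|^{-1}$. You should either adopt the paper's 3D argument, or at minimum reorder your argument to establish the Piola identities first and restrict the transport-uniqueness claim for $|\bF^H|^{-1}$ to the case covered by the ``Moreover'' clause; as written, stage 3 does not prove the stated claim.
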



\begin{proof} It suffices to complement the proof of Prop.~\ref{prop:depthaverage} and Cor.\ref{cor:depthaverage} as follows.
\begin{enumerate}
 \item The hypothesis $\partial_z \bu^H=O(\epsilon)$, and the intermediary result $\partial_H \bu^z=O(\epsilon)$ in the proof of Prop.~\ref{prop:depthaverage},
 imply that $F^i_{\epsilon,c} =O(\epsilon)= F^z_{\epsilon,\alpha}$ hold for $i\in\{x,y\},\alpha\in\{a,b\}$ and all $t\ge0$ if they hold at $t=0$,
 recall \eqref{eq:firstorder_spatial}.
 \item Then, by \eqref{eq:firstorder_spatial}, the hypothesis $H_1/{\hat H}=F^z_{0,c}$ in \eqref{eq:initialcompat} is also preserved 
 for all $t\ge0$ if it holds at $t=0$. 
 \item Last, \eqref{eq:firstorder_spatial} yields \eqref{eq:strain} for 
 $\bF^H = \lim_{\epsilon\to0} \bF^H_\epsilon$ 
 insofar as $F^z_{0,c}=1/|\bF^H_0|$ holds for all $t\ge 0$ by assumption 
 $|\bF_0|=|\bF^H_0|F^z_{0,c}=1$.
\end{enumerate}
The equivalence of a Eulerian description with a Lagrangian description when the Piola identities \eqref{eq:2Dpiola} hold for all $t\ge 0$ is classical, see e.g.~\cite{wagner-2009}. Now, note that by \eqref{eq:strain}, \eqref{eq:2Dpiola} hold for all $t\ge0$ if they hold at $t=0$.
\end{proof}

\begin{corollary}
\label{cor:depthaverage2}
Assume that the 
solutions considered in Prop.~\ref{prop:depthaverage2} also satisfy
${\bu^H_\epsilon k_{z^b_\epsilon}}/{\rho_0} = k H_\epsilon \bU_\epsilon + O(\epsilon^2)$.
Assume moreover that $\tau_\epsilon^{ij} = O(\epsilon)$ satisfy
\begin{equation}
\label{eq:taueps} 
\tau_\epsilon^{ij} =  
\Gcal_\epsilon F^i_{\epsilon,\alpha} A^{\alpha\beta}_\epsilon F^j_{\epsilon,\beta} + O(\epsilon^2) 
\end{equation}
with $\Gcal_\epsilon = O(\epsilon)$ and 
$\bA_\epsilon=A^{\alpha\beta}_\epsilon \be_\alpha\otimes\be_\beta \in S^{++}(\RR^{d\times d})$ such that 
\beq
\label{eq:odeeps}
\lambda 
(\partial_t + u^i\partial_i)  A^{\alpha\beta}_\epsilon 
=  
A^{\alpha\beta}_\epsilon 
+ \Theta 
\left( [F^{-1}_\epsilon]^{\alpha}_i [F^{-1}_\epsilon]^{\beta}_i \right) + O(\epsilon) 
\eeq 
for some $\lambda>0$ while at $t=0$, it holds
\begin{equation}
\label{eq:Aeps} 
A^{\alpha\beta}_\epsilon = O(\epsilon) \text{ if either $\alpha$ or $\beta$ equals $c$.}
\end{equation}

Then, for small $\epsilon$, 
$H_1$ can be approximated 
by a solution $H\equiv \hat H |\bF^H|^{-1}\approx H_1 $, $\bU\approx \bu^H_0$, $\bF^H\approx \bF^H_0$, $ A^{cc}\approx A^{cc}_0 >0$,
$ \bA^H = A^{\alpha\beta}\be_\alpha\otimes\be_\beta \in S^{++}(\R^{d \times d})$, 
$A^{\alpha\beta}\approx A^{\alpha\beta}_0$, $\alpha,\beta\in\{a,b\}$
to 
\eqref{eq:massconservation}--\eqref{eq:momentumconservation}--\eqref{eq:strain}--\eqref{eq:stresseps}--\eqref{eq:AH}--\eqref{eq:Acc}
\begin{align}
\label{eq:stresseps}
& \Sigma^{zz} = \Gcal_\epsilon A^{cc} H^2
\qquad
\bSigma^H = \Gcal_\epsilon \bF^H \bA^H (\bF^H)^T
\\
\label{eq:AH}
& \lambda (\partial_t + \bU\cdot\grad_H) \bA^H = ((\bF^H)^T\bF^H)^{-1}-\bA^H 
\\
\label{eq:Acc}
&  \lambda (\partial_t + \bU\cdot\grad_H) A^{cc} = H^{-2}-A^{cc}
\end{align}
where the source terms for $\bA^H=(\bA^H)^T>0$ are defined using matrix products.

Moreover, the full Saint-Venant system \eqref{eq:massconservation}--\eqref{eq:momentumconservation}--\eqref{eq:strain}--\eqref{eq:AH}--\eqref{eq:Acc}
has an equivalent in material coordinates 
for smooth motions $\bPhi_t^H$ such that $\partial_t\bPhi_t^H=\bU\circ\bPhi_t^H$, $\grad_H\bPhi_t^H=\bF^H\circ\bPhi_t^H$
if 
Piola's identities \eqref{eq:2Dpiola} hold at $t=0$.
\end{corollary}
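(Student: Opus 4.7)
The plan is to build on Prop.~\ref{prop:depthaverage2} and reduce the extra work to analyzing the stress assumptions \eqref{eq:taueps}--\eqref{eq:odeeps} and the initial compatibility \eqref{eq:Aeps}. First I would invoke Prop.~\ref{prop:depthaverage2} directly to obtain, at leading order in $\epsilon$, the mass balance \eqref{eq:massconservation}, the strain equation \eqref{eq:strain}, and the left-hand side of the momentum balance \eqref{eq:momentumconservation}; the only term still to be identified is the averaged divergence of the Cauchy stress. Under the friction hypothesis ${\bu^H_\epsilon k_{z^b_\epsilon}}/{\rho_0} = k H_\epsilon \bU_\epsilon + O(\epsilon^2)$, the bottom stress reduces as in Cor.~\ref{cor:depthaverage} a.

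Next I would exploit the block structure of $\bF_\epsilon$ inherited from Prop.~\ref{prop:depthaverage2}: since $F^i_{\epsilon,c}=O(\epsilon)=F^z_{\epsilon,\alpha}$ for $i\in\{x,y\}$, $\alpha\in\{a,b\}$, the inverse $\bF^{-1}_\epsilon$ has the same block pattern with $[F^{-1}_\epsilon]^{c}_z = 1/F^z_{\epsilon,c} + O(\epsilon) = H/\hat H + O(\epsilon)$ and $[F^{-1}_\epsilon]^{\alpha}_{i} = [(\bF^H_\epsilon)^{-1}]^{\alpha}_i + O(\epsilon)$ for $\alpha\in\{a,b\}$, $i\in\{x,y\}$, while the cross blocks are $O(\epsilon)$. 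Plugging this into the source term of \eqref{eq:odeeps}, one finds that $[F^{-1}_\epsilon]^{\alpha}_i [F^{-1}_\epsilon]^{\beta}_i = O(\epsilon)$ whenever $\alpha$ or $\beta$ equals $c$. Together with the initial assumption \eqref{eq:Aeps}, the linear relaxation structure of \eqref{eq:odeeps} (which is a scalar ODE along characteristics with a forcing that is $O(\epsilon)$ in the cross blocks) yields that $A^{\alpha\beta}_\epsilon = O(\epsilon)$ for cross blocks at all times, so it reduces to an ODE for $A^{cc}_\epsilon$ and a system for the horizontal block $\bA^H_\epsilon$.

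From this block reduction I would plug \eqref{eq:taueps} into the definition of $\bSigma^H_\epsilon$, $\Sigma^{zz}_\epsilon$ in \eqref{eq:depthaveragedeps}. The horizontal contribution is dominated by $\Gcal_\epsilon F^i_\alpha A^{\alpha\beta} F^j_\beta$ with $i,j\in\{x,y\}$ and $\alpha,\beta\in\{a,b\}$, which averages over $z$ to $\Gcal_\epsilon \bF^H\bA^H(\bF^H)^T$ at leading order because $\partial_z\bF^H=O(\epsilon)$; the vertical contribution reduces to $\Gcal_\epsilon F^z_c A^{cc} F^z_c = \Gcal_\epsilon (H/\hat H)^2 A^{cc}$, which (modulo absorbing $\hat H$) yields $\Sigma^{zz}=\Gcal_\epsilon A^{cc} H^2$. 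Then, depth-averaging \eqref{eq:odeeps} for the horizontal block and using $u^z=O(\epsilon)$, $\partial_z\bu^H=O(\epsilon)$ (so $\partial_t+u^i\partial_i$ reduces to $\partial_t+\bU\cdot\grad_H$ when applied to depth-independent quantities) and the limit source $((\bF^H)^T\bF^H)^{-1}$ yields \eqref{eq:AH}; the same averaging on the $cc$-component, together with $[F^{-1}]^c_z = H/\hat H + O(\epsilon)$ and $[F^{-1}]^c_{x,y}=O(\epsilon)$, gives \eqref{eq:Acc} after identification of $\Theta$ with the coefficients absorbed into the rescaling.

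Finally, for the Lagrangian equivalence, I would invoke the second part of Prop.~\ref{prop:depthaverage2}: the 2D Piola identities \eqref{eq:2Dpiola} are preserved by \eqref{eq:strain}, so standard elastodynamics arguments (as in \cite{wagner-2009}) provide a smooth $\bPhi_t^H$ with $\partial_t\bPhi_t^H=\bU\circ\bPhi_t^H$ and $\grad_H\bPhi_t^H=\bF^H\circ\bPhi_t^H$, and \eqref{eq:AH}--\eqref{eq:Acc} are scalar/matrix relaxation ODEs that transport trivially to material coordinates along $\bPhi_t^H$. The main obstacle is the rigorous control of the block structure of $\bA_\epsilon$ under the relaxation dynamics \eqref{eq:odeeps}: one has to verify that the $O(\epsilon)$ cross components do not generate $O(1)$ contributions to $\Sigma^{zz}_\epsilon$ or $\bSigma^H_\epsilon$ through the quadratic dependence on $\bF_\epsilon$, which is where the initial compatibility \eqref{eq:Aeps} combined with the scaling $\Gcal_\epsilon=O(\epsilon)$ becomes essential.
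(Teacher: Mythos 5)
Your proposal follows essentially the same route as the paper's proof: a Neumann-series expansion of $\bF_\epsilon^{-1}$ about $\bF_0^{-1}$, propagation of the block structure of $\bF_\epsilon$ (and of \eqref{eq:taueps}) in time via the evolution equations, and then direct identification of the depth-averaged stresses and the relaxation equations for $\bA^H$ and $A^{cc}$, with the Lagrangian equivalence handled exactly as in Prop.~\ref{prop:depthaverage2}. Your account is more explicit than the paper's terse three-step proof (in particular, you spell out why the cross blocks $A^{\alpha c}_\epsilon$ remain $O(\epsilon)$ by inspecting the relaxation ODE with the $O(\epsilon)$ cross-block forcing and the initial condition \eqref{eq:Aeps}, a point the paper leaves implicit), but there is no genuine difference of method.
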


\begin{proof} 
\begin{enumerate}
 \item First observe $\bF^{-1} = \bF_0^{-1} + O(\epsilon)$ after using e.g. the Neumann series expansion of $(\bI + \epsilon \bF_0^{-1}\bR)^{-1}:=\bF_\epsilon^{-1}\bF_0$.
 \item Then recall from the proof of Prop.~\ref{prop:depthaverage2} that
 $F^i_{\epsilon,c} =O(\epsilon)= F^z_{\epsilon,\alpha}$ hold for $i\in\{x,y\},\alpha\in\{a,b\}$ and all $t\ge0$ in so far it holds at $t=0$, 
 so \eqref{eq:taueps}  
 is preserved for all $t\ge0$ if it holds at $t=0$.
 \item With $H_1 = \hat H F^z_{0,c}=\hat H |\bF^H_0|^{-1}>0$, the first result above yields \eqref{eq:AH}--\eqref{eq:Acc}.
 Moreover, with the second result above, \eqref{eq:taueps} yields \eqref{eq:stresseps}. 
\end{enumerate}
Last, motions remain sufficiently smooth for changing to material coordinates without more constraint than in Prop.~\ref{prop:depthaverage2}.
\end{proof}

We have thus obtained a 2D system for the shallow 
flows of UCM fluids which is a natural viscoelastic extension of the standard Saint-Venant system 
(for the shallow 
flows of 
Newtonian fluids), and which we term Saint-Venant Maxwell (SVM in short).
Let us now show that the system of equations is a useful symmetric-hyperbolic system of conservation laws. 

\begin{proposition}\label{prop:svmconservation}
Smooth solutions to \eqref{eq:massconservation}--\eqref{eq:momentumconservation}--\eqref{eq:strain}--\eqref{eq:AH}--\eqref{eq:Acc}--\eqref{eq:stresseps} i.e. to 
\beq\label{SVM}
\begin{aligned}
& \partial_t(H \bU) + \div_H\left( H \bU \otimes \bU + \left(\frac{g}2H^2 + \Gcal_\epsilon A^{cc} H^3\right)\bI - \Gcal_\epsilon H \bF^H \bA^H (\bF^H)^T \right)
\\
& \quad = - g H \grad_H z_b - k H \bU 
\\
& \partial_t H + \div_H( H \bU ) = 0 
\\
& \partial_t(H\bF^H) + \div_H(H\bU\otimes\bF^H-H\bF^H\otimes\bU) = 0 
\\
& \partial_t( H  \bA^H) + \div_H( H \bU \bA^H) = H\left( ((\bF^H)^T\bF^H)^{-1}-\bA^H \right)/\lambda 
\\
& \partial_t( H  A^{cc}) + \div_H( H \bU  A^{cc}) = H\left( H^{-2}-A^{cc}\right)/\lambda 
\end{aligned}
\eeq
are equivalently solutions to 
\eqref{eq:massconservation}--\eqref{eq:momentumconservation}--\eqref{eq:Sigmah}--\eqref{eq:Sigmazz} 
(our former formulation in \cite{bouchut-boyaval-2015} of a viscoelastic Saint-Venant system for Maxwell fluids)
when $\nu_\epsilon=\Gcal_\epsilon\lambda>0$ 
and the latter is complemented by \eqref{eq:strain}--\eqref{eq:stresseps},
or to a Lagrangian description in material coordinates  
using $\bU\circ\bPhi_t^H=\partial_t\bPhi_t^H$, $\bF^H\circ\bPhi_t^H=\grad_H\bPhi_t^H$ when moreover \eqref{eq:2Dpiola} holds with $H|\bF^H|=\hat H>0$ constant.
%
Furthermore, 
they satisfy 
\begin{multline}
\label{eq:SVHEeul}
\partial_t E + 
\div_H\left(\bU (E + \frac{g}2H^2) + \Gcal_\epsilon H (\Sigma^{zz}-\bSigma^H) 
\cdot\bU \right)
\\ 
= 
-kH|\bU|^2 -gH\bU\cdot\grad_Hz_b 
-HD 
\end{multline}
with 
$\Gcal_\epsilon \lambda D = \tr \bSigma^H + \tr (\bSigma^H)^{-1} + \Sigma^{zz}+(\Sigma^{zz})^{-1} -6 \ge 0$
and 
\beq
\label{energy}
E =\frac{H}2\Big( |\bU|^2 + gH +
\left( \tr \bSigma^H + \Sigma^{zz} - \log(\Sigma^{zz}|\bSigma^H|) \right) 
\Big).
\eeq
\end{proposition}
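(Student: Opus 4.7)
The plan is to prove the three claims (equivalence of formulations, energy identity, and dissipation sign) in sequence, leveraging the symmetric-hyperbolic structure established in Section~\ref{sec:model} for the full 3D system.

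For the equivalence of formulations, I would first expand each conservation law in \eqref{SVM} via the product rule and the mass balance $\partial_t H+\div_H(H\bU)=0$, which immediately recovers the advection-form equations \eqref{eq:AH} and \eqref{eq:Acc} and, thanks to the 2D Piola identity \eqref{eq:2Dpiola} (preserved along smooth solutions to \eqref{eq:strain} by the argument already spelled out in Prop.~\ref{prop:depthaverage2}), also $(\partial_t+\bU\cdot\grad_H)\bF^H=(\grad_H\bU)\bF^H$. Differentiating $\bSigma^H=\Gcal_\epsilon\bF^H\bA^H(\bF^H)^T$ along the flow, the $\bF^H$- and $(\bF^H)^T$-derivatives account for the stretching terms $-(\grad_H\bU)\bSigma^H-\bSigma^H(\grad_H\bU)^T$ in the upper-convected derivative $\stackrel{\Diamond}{\bSigma^H}$, and the source from \eqref{eq:AH} combines with the 2D identity $\bF^H((\bF^H)^T\bF^H)^{-1}(\bF^H)^T=\bI$ to deliver the relaxation right-hand side of \eqref{eq:Sigmah} with $\nu_\epsilon=\Gcal_\epsilon\lambda$; \eqref{eq:Sigmazz} follows analogously from \eqref{eq:Acc} together with $\dot H=-H\div_H\bU$. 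The Lagrangian description is then the classical change of variables conditional on \eqref{eq:2Dpiola} and $H|\bF^H|=\hat H$, both preserved along smooth solutions.

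For the energy identity, I would write $E$ in the suggestive form $E/H=\tfrac12|\bU|^2+\tfrac12 gH+gz_b+\psi$ with $\psi=\tfrac12(\tr\bSigma^H+\Sigma^{zz}-\log(\Sigma^{zz}|\bSigma^H|))$, compute the material derivative of $E/H$ and multiply by $H$, using $H\dot X=\partial_t(HX)+\div_H(H\bU X)$ to reconstitute the left-hand side of \eqref{eq:SVHEeul}. Dotting the momentum equation with $\bU$ delivers the kinetic and gravitational contributions as well as the stress-power $\tr(\bSigma^H\grad_H\bU)-\Sigma^{zz}\div_H\bU$; differentiating $\tr\bSigma^H$ and $\Sigma^{zz}$ using \eqref{eq:AH}--\eqref{eq:Acc}, the flow rule for $\bF^H$ from Part~1, and Jacobi's formula $\dot{\log|\bA^H|}=\tr(\bA^{H,-1}\dot\bA^H)$ produces matching $(\grad_H\bU)$-dependent pieces that cancel the stress-power; a second use of Jacobi to expand $\dot{\log(\Sigma^{zz}|\bSigma^H|)}$, together with $\dot{\log|\bF^H|}=\div_H\bU$, removes the remaining velocity-gradient contributions and leaves exactly the algebraic relaxation residue, which reorganises into $\Gcal_\epsilon\lambda D=\tr\bSigma^H+\tr(\bSigma^H)^{-1}+\Sigma^{zz}+(\Sigma^{zz})^{-1}-6$. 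The non-negativity of $D$ then follows from the elementary inequality $x+x^{-1}\ge 2$ for $x>0$ applied to the positive scalar $\Sigma^{zz}$ and to each eigenvalue of $\bSigma^H\in S^{++}(\R^{2\times 2})$, summing to the constant $6$ subtracted in the formula.

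The main obstacle is that middle step: pinpointing the correct free energy (particularly the logarithmic determinant) and verifying the exact cancellation of all cross-terms between $(\grad_H\bU)$ and the viscoelastic variables. This calculation is the 2D shallow-water counterpart of the strict convex extension in Prop.~\ref{prop:symmetrichyperbolic}, and it is considerably cleaner to carry out on the conformation-type variables $\bA^H, A^{cc}, \bF^H$, where the Godunov--Mock structure inherited from Section~\ref{sec:model} is manifest, than to work directly with the stresses $\bSigma^H, \Sigma^{zz}$ via the quasilinear form \eqref{eq:Sigmah}--\eqref{eq:Sigmazz}.
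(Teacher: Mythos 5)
Your proposal is essentially correct and reflects the same core ideas as the paper's proof: introduce the conformation-type combinations $\bc_H=\bF^H\bA^H(\bF^H)^T$, $c_{zz}=H^2A^{cc}$ to pass between \eqref{SVM} and the quasilinear system \eqref{eq:Sigmah}--\eqref{eq:Sigmazz}, invoke Jacobi's formula to handle the logarithmic determinant, and close the dissipation estimate with $x+x^{-1}\ge2$ applied to $\Sigma^{zz}$ and the eigenvalues of the (positive-definite) $\bSigma^H$, which is exactly where the $-6$ comes from. The one genuine difference in route is that the paper first rewrites the full system in material coordinates \eqref{eq:SVUCM0detaillag}, derives the Lagrangian energy balance \eqref{eq:SVHElag} for the polynomial part of the free energy there (where $\partial_t$ already is the material derivative, so only the flux gradient $\partial_\alpha$ needs bookkeeping), and only then passes back to spatial coordinates using \eqref{eq:SVHElagbis}--\eqref{eq:SVHElagter} to pick up the log terms. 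You instead carry out the equivalent computation directly in Eulerian variables via the identity $H\dot X=\partial_t(HX)+\div_H(H\bU X)$. Both are valid; the Lagrangian detour trades a slightly longer setup for simpler transport terms, which is why the paper favours it, but your Eulerian computation gets to the same cancellations.

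Two small points to tidy. First, you write $E/H=\tfrac12|\bU|^2+\tfrac12gH+gz_b+\psi$, but the paper's $E$ in \eqref{energy} does not include the $gz_b$ term; it keeps $-gH\bU\cdot\grad_H z_b$ as a source on the right of \eqref{eq:SVHEeul}, so you should either drop $gz_b$ from $E/H$ and keep the source, or absorb it and remove the source (both work, but do not double-count). Second, the proposition's $\bSigma^H,\Sigma^{zz}$ in the dissipation formula and the log must be read (as your argument implicitly assumes) in the same normalization as $\bc_H,c_{zz}$, not literally as the stresses in \eqref{eq:stresseps}, since it is $\bc_H,c_{zz}\to\bI,1$ which makes $D$ vanish at equilibrium and renders $\tr(\bSigma^H)^{-1}$ meaningful.
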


\begin{proof}
The equivalence between the Eulerian descriptions of 
viscoelastic 2D Saint-Venant flows for Maxwell fluids 
can be seen e.g. on introducing 
\beq
\label{eq:BhandBzz}
\bc_H=\bF^H\bA^H(\bF^H)^T
\quad
c_{zz}=H^2 A^{cc}>0
\eeq
which can be thought as the first-order approximation (i.e. the depth-average) of the conformation tensor $\bc$
classically used for the viscoelastic modelling of polymeric flows, recall section~\ref{sec:viscoelastic}.
On noting $\bc_H=\lambda\bSigma^H/\nu_\epsilon+\bI$, $c_{zz}=\lambda\Sigma^{zz}/\nu_\epsilon+1$,
and starting from the system of conservation laws, one obtains 
\beq
\label{eq:SVUCM}
\begin{aligned}
    & \partial_t H + \div_H( H \bU ) = 0
    \\
    & \partial_t(H\bU) + \div_H(H\bU\otimes\bU + (gH^2/2 + \Gcal_\epsilon 
    H c_{zz})\bI - \Gcal_\epsilon 
    H\bc_H ) = -kH\bU -gH\grad_Hz_b
    \\
    & \partial_t \bc_H + \bU\cdot\grad_H \bc_H 
    - (\grad_H\bU)\bc_H - \bc_H (\grad_H\bU)^T 
    = (\bI-\bc_H)/\lambda
    \\ 
    & \partial_t c_{zz} + \bU\cdot\grad_H c_{zz} 
    + 2 
    c_{zz} \div_H\bU = (1-c_{zz}) / \lambda
\end{aligned}
\eeq
which is obviously equivalent to the 2D system proposed in \cite{bouchut-boyaval-2015}
as an extension of the 1D viscoelastic system in \cite{bouchut-boyaval-2013}
when it is complemented by \eqref{eq:strain}--\eqref{eq:BhandBzz}.

Reciprocally, the 
quasilinear system \eqref{eq:SVUCM} complemented by \eqref{eq:BhandBzz}--\eqref{eq:strain} rewrites as the 
system of conservation laws \eqref{SVM} i.e.
\beq\label{eq:SVUCM0detail}
\hspace{-.3mm}\begin{aligned}
    & \partial_t (H U^i) + \partial_j( H U^jU^i + (g\tfrac{H^2}2 + \Gcal_\epsilon 
    H^3 A^{cc})\delta_{i=j} - \Gcal_\epsilon 
    H F^i_\alpha A^{\alpha\beta} F^j_\beta) = -HkU^i-gH\partial_iz_b
    \\
    & \partial_t (HF^i_\alpha) + \partial_j( H U^j F^i_\alpha - H F^j_\alpha U^i ) = 0
    \\
    & \partial_t H + \partial_{j}( H U^j ) = 0
    \\
    & \partial_t (H A^{\alpha\beta}) + \partial_j( H U^j A^{\alpha\beta} ) 
    = H ( |\bF_h|^{-2} \sigma_{\alpha\alpha'}\sigma_{\beta\beta'} 
    F^k_{\alpha'} F^k_{\beta'} -A^{\alpha\beta})/\lambda
    \\ 
    & \partial_t (HA^{cc}) + \partial_{j}( H U^j A^{cc} ) =  H(H^{-2}-A^{cc})/\lambda
\end{aligned}
\eeq 
in coordinates using $\alpha\in\{a,b\}$, $i\in\{x,y\}$
(note that adding \eqref{eq:BhandBzz},\eqref{eq:strain} was not necessary in 1D \cite{bouchut-boyaval-2013}).
Moreover, if Piola's identities \eqref{eq:2Dpiola} hold and $H|\bF^H|=\hat H$
then one has the equivalent Lagrangian description
\beq
\label{eq:SVUCM0detaillag}
\begin{aligned}
    & \partial_t U^i + \partial_\alpha \left( (gH^2/2 + \Gcal_\epsilon 
    H^3 A^{cc}) \sigma_{ij}\sigma_{\alpha\beta} 
    F^j_\beta - \Gcal_\epsilon 
    F^i_\beta A_{\beta\alpha} \right) = -kU^i - g \partial_i z_b 
    \\
    & \partial_t F^i_\alpha - \partial_\alpha U^i = 0
    \\
    & \partial_t H^{-1} - \partial_{\alpha}( U^j \sigma_{jk}\sigma_{\alpha\beta} 
    F^k_\beta ) = 0 
    \\
    & \partial_t A^{\alpha\beta} = ( |\bF^H|^{-2}\sigma_{\alpha\alpha'}\sigma_{\beta\beta'} 
    F^k_{\alpha'} F^k_{\beta'} -A^{\alpha\beta})/\lambda
    \\ 
    & \partial_t A^{cc} = (H^{-2}-A^{cc})/\lambda
\end{aligned}
\eeq
using fields functions of material coordinates (defined in a reference configuratio of the body)
-- i.e. for the sake of clarity we abusively used the same notation in \eqref{eq:SVUCM0detaillag} 
as in the Eulerian description \eqref{eq:SVUCM0detail}, omitting $\circ\bPhi_t^H$.

Last, one easily computes the following balance 
in the Lagrangian description 
\begin{multline}
\label{eq:SVHElag}
\partial_t 
\left( \frac12 \sum_i|U^i|^2 + \frac12 g H + \frac12 \Gcal_\epsilon \left( F^i_\alpha A^{\alpha\beta} F^i_\beta + H^2 A^{cc} 
\right) \right)
\\
+ \partial_\alpha\left(  U^i \left( (\frac{g}2H^2 + \Gcal_\epsilon H^3 A^{cc}) \sigma_{ij}\sigma_{\alpha\beta} F^j_\beta 
- \Gcal_\epsilon  F^i_\alpha A^{\alpha\beta} \right) \right)
\\
= -k|\bU|^2 - U^i(\partial_iz_b) + ( \delta_{\alpha\beta} - F^k_{\alpha} F^k_{\beta} A^{\alpha\beta})/\lambda + (1 -H^2 A^{cc})/\lambda
\end{multline}
hence \eqref{eq:SVHEeul} in spatial coordinates on noting
\begin{multline}
\label{eq:SVHElagbis}
(\partial_t+\bU^H\cdot\grad_H) \log|F^i_\alpha A^{\alpha\beta} F^i_\beta| = (\partial_t+\bU^H\cdot\grad_H) \log|\bc_H| 
\\
= \tr\left((\bc_H)^{-1} (\partial_t+\bU^H\cdot\grad_H) \bc_H \right)
= 2(\div_H\bU) + (\tr(\bc_H)^{-1}-2)/\lambda
\end{multline}
\begin{multline}
\label{eq:SVHElagter}
(\partial_t+\bU^H\cdot\grad_H) \log(H^2 A^{cc}) = c_{zz}^{-1} (\partial_t+\bU^H\cdot\grad_H) c_{zz}
\\ = - 2(\div_H\bU) + (c_{zz}^{-1}-1)/\lambda
\end{multline}
and $x+x^{-1}\ge2$, $\forall x>0$.
\mycomment{ 
Introducing $\Gcal^i_\alpha= \mu  F^i_\beta A_{\alpha\beta}$, 
$$
\Vcal_\alpha = U^i \sigma_{\alpha\beta}\sigma_{ij} F^j_\beta 
\,,\quad
\Pcal^i_\alpha = \Pcal \sigma_{\alpha\beta}\sigma_{ij} F^j_\beta - \Gcal^i_\alpha
\,,\quad
\Pcal= \frac{gH^2}2 + \mu H^3 A_{cc}
\,,
$$
we obtain a simple reformulation of (the 3 first lines of) \eqref{eq:SVUCM0detaillag} 
and 
\eqref{eq:SVHElag}
as: 
\beq
\label{eq:SVUCM0detaillagbis}
\begin{aligned}
    & \partial_t H^{-1} - \partial_{\alpha} \Vcal_\alpha = 0
    \\
    & \partial_t F^i_\alpha - \partial_\alpha U^i = 0
    \\
    & \partial_t U^i + \partial_\alpha \Pcal^i_\alpha = -K U^i
\end{aligned}
\eeq
\beq
\label{eq:SVHElagbis}
\partial_t E + \partial_\alpha\left(  U^i \Pcal^i_\alpha \right) \le -K|\bU|^2 -D \,.
\eeq
which can now be easily compared to the usual Lagrangian formulation of elastodynamics \cite{dafermos-2000,wagner-2009} (see Rem.~\ref{rem:lagrangian1}):
$\mu \bA_h$, $\mu A_{cc}$ can be understood as variable 
anisotropic elastic properties, which induce a viscous behaviour through friction on a time-scale $\lambda\to0$
inline with Maxwell ideas \cite{Maxwell01011867,maxwell-1874,poisson-1831}.
} 
\end{proof}

\begin{remark}[Saint-Venant extension to weakly-sheared RANS models]
\label{rem:rans}
Despite the similarity between \eqref{eq:massconservation}--\eqref{eq:momentumconservation}--\eqref{eq:stresseps}--\eqref{eq:Sigmah}--\eqref{eq:Sigmazz} 
and the 2D system in the recent work \cite{gavrilyuk-ivanova-favrie-2018} that extends Saint-Venant to weakly-sheared RANS models, 
the latter has no known conservative formulation as opposed to the former. 
This is a well-known ``apparent similarity'' between RANS and Maxwell equations, see e.g. \cite{speziale-2000}. 
\end{remark}



\begin{proposition}
\label{prop:symhyp}
Smooth solutions to 
\eqref{SVM} with $\bA^H\in S^{++}(\R^{d\times d}),A^{cc}>0$ 
are 
in bijection with smooth solutions $q=(H,H\bU,H\bF^H,H\bY^H,HY^{cc})$ to 
\beq\label{SVM1}
\begin{aligned}
& \partial_t(H \bU) + \div_H\left( H \bU \otimes \bU + \left(\frac{g}2H^2 + \Gcal_\epsilon A^{cc} H^3\right)\bI - \Gcal_\epsilon H \bF^H \bA^H (\bF^H)^T \right) =
\\
& \quad - g H \grad_H z_b - k H \bU 
\\
& \partial_t H + \div_H( H \bU ) = 0 
\\
& \partial_t(H\bF^H) + \div_H(H\bU\otimes\bF^H-H\bF^H\otimes\bU) = 0 
\\
& \partial_t( H  \bY^H) + \div_H( H \bU \bY^H) = - H \bY^H\left( \bZ^H-\bY^H \right)\bY^H/\lambda 
\\
& \partial_t( H Y^{cc}) + \div_H( H \bU Y^{cc}) = H\left( H^{-2}(Y^{cc})^{-3}-Y^{cc}\right)/4\lambda 
\end{aligned}
\eeq 
when $\bA^H=(\bY^H)^{-\frac12},A^{cc}=(Y^{cc})^{4}$ is defined componentwise by identification with
the square-root matrix-inverse of $\bY^H=Y^{\alpha\beta}\be_\alpha\otimes\be_\beta\in S^{++}(\R^{d\times d})$,
$\bZ^H:=(\bA^H\bF^H(\bF^H)^{T})^{-1}+(\bF^H(\bF^H)^{T}\bA^H)^{-1}$, 
and we recall $H|\bF^H|=\hat H>0$.
Furthermore, for some algebraic term $\tilde\Dcal$ without sign a priori, the functional
\beq
\label{energy2}
\tilde E =\frac{H}2\Big( |\bU|^2 + gH +
\left( \tr \bSigma^H + \Sigma^{zz} 
 + \tr\left(\bY^H\bY^H\right) \right) 
\Big)
\eeq
strictly convex in $q\in \Acal^+_H := \{H>0\,,\ \bY^H=(\bY^H)^T>0\,, Y_{cc}>0\}$ 
satisfies 
\begin{multline}
\label{eq:energy_spatialSVM}
\partial_t \tilde E + 
\div_H\left(\bU (\tilde E + \frac{g}2H^2) + H (\Sigma^{zz}-\bSigma^H) 
\cdot\bU \right)
\\ 
= 
-kH|\bU|^2 -gH\bU\cdot\grad_Hz_b 
-H\tilde D \,.
\end{multline}
Thus $\tilde E$ defines a mathematical entropy for 
\eqref{SVM1}, 
\eqref{eq:energy_spatialSVM} defines a \emph{strictly convex extension} for 
\eqref{SVM1},
and \eqref{SVM1} is 
a \emph{symmetric-hyperbolic} system of conservation laws on the 
open set $\Acal^+_H \equiv \{H>0\,,\ \bA^H=\bA^H>0\,,\ A_{cc}>0 \}$. %
\end{proposition}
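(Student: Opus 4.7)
The strategy closely parallels that of Prop.~\ref{prop:symmetrichyperbolic}:
(i) establish the bijection between the $(\bA^H,A^{cc})$ and $(\bY^H,Y^{cc})$ formulations and derive the last two lines of~\eqref{SVM1};
(ii) derive the entropy balance \eqref{eq:energy_spatialSVM} for $\tilde E$ by direct computation;
(iii) prove that $\tilde E$ is strictly convex in the conservative variables $q$ on $\Acal^+_H$;
(iv) invoke Godunov-Mock theorem \cite[Chapter~3]{godlewski-raviart-1996} to conclude symmetric-hyperbolicity.

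Step (i) is purely algebraic: $\bA^H\in S^{++}(\R^{d\times d})\mapsto\bY^H=(\bA^H)^{-2}$ and $A^{cc}>0\mapsto Y^{cc}=(A^{cc})^{1/4}$ are smooth bijections onto $S^{++}(\R^{d\times d})$ and $(0,\infty)$, respectively. Differentiating in time and using
$\partial_t(\bA^H)^{-2}=-(\bA^H)^{-2}(\partial_t\bA^H)(\bA^H)^{-1}-(\bA^H)^{-1}(\partial_t\bA^H)(\bA^H)^{-2}$
together with the $\bA^H$, $A^{cc}$ equations from~\eqref{SVM} produces the claimed evolution laws for $\bY^H$, $Y^{cc}$. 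Step (ii) is routine: contracting the equations of~\eqref{SVM1} against the gradients of $\tilde E/H$ with respect to each non-conservative variable and summing, exactly as in the energy computation of Prop.~\ref{prop:svmconservation}, produces \eqref{eq:energy_spatialSVM} with an algebraic source $\tilde D$ whose sign plays no role here.

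The main obstacle is step (iii). By the standard change-of-variable result \cite[Th.~3.1]{wagner-2009} or \cite[Lemma~1.4]{bouchut-2004}, strict convexity of $\tilde E$ in $q=(H,H\bU,H\bF^H,H\bY^H,HY^{cc})$ is equivalent to strict convexity of $\tilde E/H$ in $(H^{-1},\bU,\bF^H,\bY^H,Y^{cc})$. I would argue term by term. Trivially, $\tfrac12|\bU|^2$, $\tfrac{g}{2}H=\tfrac{g}{2}(H^{-1})^{-1}$ and $\tr(\bY^H\bY^H)$ are strictly convex in $\bU$, $H^{-1}$ and $\bY^H$, respectively. The ``horizontal'' stress contribution $(\bF^H,\bY^H)\mapsto \tr\bigl(\bF^H(\bY^H)^{-1/2}(\bF^H)^T\bigr)$ is jointly convex by \cite[Th.~2, p.~276]{lieb-1973} with $r=1/2$ and $p=0$; strictness in $\bF^H$ follows from the quadratic structure with positive-definite kernel $(\bY^H)^{-1/2}$, and strictness in $\bY^H$ is supplied by $\tr(\bY^H\bY^H)$, exactly as in Prop.~\ref{prop:symmetrichyperbolic}. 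The delicate new ingredient is the vertical stress $(Y^{cc})^4 H^2=(Y^{cc})^4(H^{-1})^{-2}$: its $2\times 2$ Hessian in $(H^{-1},Y^{cc})$ has positive diagonal and determinant $8(Y^{cc})^6(H^{-1})^{-6}>0$, so it is strictly positive definite. This is exactly what dictates the unusual substitution $A^{cc}=(Y^{cc})^4$; a square-root convention analogous to $\bA^H=(\bY^H)^{-1/2}$ would not yield joint convexity in $(H^{-1},Y^{cc})$. Since the full Hessian is block-diagonal in $\bU$, $(\bF^H,\bY^H)$, $(H^{-1},Y^{cc})$ with each block positive definite, $\tilde E/H$ is strictly convex on $\Acal^+_H$.

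Step (iv) is then immediate: Godunov-Mock theorem gives that $\tilde E$ is a mathematical entropy and \eqref{eq:energy_spatialSVM} a strictly convex extension for~\eqref{SVM1}, so \eqref{SVM1} is a symmetric-hyperbolic system of conservation laws on the open convex set $\Acal^+_H$.
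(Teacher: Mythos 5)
Your argument is correct and follows essentially the same route as the paper's: change of variables to the $\bY$-formulation, reduction to strict convexity of $\tilde E/H$ in the ``Lagrangian'' variables via \cite[Th.~3.1]{wagner-2009} (the paper cites \cite{bouchut-2003} at this step), term-by-term convexity with Lieb's theorem for the $(\bF^H,\bY^H)$ block and a $2\times 2$ Hessian check for the $(H^{-1},Y^{cc})$ block, and Godunov--Mock to conclude. Your Hessian computation for the vertical stress (diagonal $6(Y^{cc})^4 H^4$, $12(Y^{cc})^2 H^2$, determinant $8(Y^{cc})^6 H^6$) is clean and correct; the paper instead computes the Hessian of the combined scalar $gH+\Gcal_\epsilon H^2 A^{cc}$ and, as printed, has some coefficient errors in the entries, though it reaches the same positive-definiteness conclusion. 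Splitting off $gH$ as you do, so that $\Gcal_\epsilon A^{cc}H^2$ alone carries the joint strict convexity in $(H^{-1},Y^{cc})$, is a minor organizational improvement.

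One parenthetical claim is wrong, though. You assert that a convention analogous to $\bA^H=(\bY^H)^{-1/2}$, i.e.\ $A^{cc}=(Y^{cc})^{-1/2}$, would not give joint convexity in $(H^{-1},Y^{cc})$. In fact, for $A^{cc}=(Y^{cc})^p$ the Hessian of $(Y^{cc})^p(H^{-1})^{-2}$ has determinant $2p(p-3)(Y^{cc})^{2p-2}(H^{-1})^{-6}$ and second diagonal entry $p(p-1)(Y^{cc})^{p-2}(H^{-1})^{-2}$, so joint strict convexity holds precisely for $p>3$ or $p<0$. Thus $p=-1/2$ (and $p=-2$, the exact scalar analogue of the matrix convention) does work; what fails is any $p\in[0,3]$. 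The exponent $4$ is simply the smallest positive integer in the admissible range, not the unique convex choice. This does not affect the validity of your proof, but the ``would not yield joint convexity'' claim should be dropped or replaced by the correct characterization $p>3$ or $p<0$.
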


\begin{proof}
It is a lengthy but straightforward computation to show the bijection between smooth solutions, i.e. the equivalence between \eqref{SVM} and \eqref{SVM1}. 
Next, recalling Godunov-Mock theorem \cite{godlewski-raviart-1996},
it suffices to show that $\tilde E$ is (jointly) strictly convex in $q$ 
i.e. the Lagrangian energy $\tilde E/H$ is (jointly) strictly convex in $(H^{-1},\bU,\bF^H,\bY^H,Y^{cc})$, recall e.g. \cite{bouchut-2003}.
Now, to that aim, note that $\tilde E/H$ is the sum of (a) $|\bU|^2/2$ strictly convex in $\bU$,
plus (b) $gH + \Gcal_\epsilon H^2 (Y^{cc})^{-4}$ 
strictly convex in $(H^{-1},Y^{cc})\in(\R^+_*)^2$ --  
compute for instance the Hessian matrix
$$
\begin{pmatrix}
2gH^3 + 6\mu H^4 A_{cc} & - 2\mu  H^3 A_{cc}^{3/4}                
\\
- 2\mu  H^3 A_{cc}^{3/4} & 2\mu  H^2 A_{cc}^{1/2}
\end{pmatrix}\; \text{--,}
$$
and (c) $\tr(\bF^H(\bY^H)^{-\frac12}(\bF^H)^T) + \tr\left(\bY^H\bY^H\right) $ which is strictly convex in $(\bF^H,\bY^H)$ on $\Acal_H^+$
as we already proved 
(for any dimension !) in Prop.~\ref{prop:symmetrichyperbolic}.
\end{proof}

\begin{corollary}
\label{cor:strongsol2}
Consider the SVM system \eqref{SVM1} 
\begin{equation}
\label{eq:quasilinear2}
\partial_t q + \grad_qF_i(q)\partial_i q = B(q)
\end{equation}
with the smooth 
functionals $F_i,B$. 
For all state $q_0 \in \Acal_H^{+}$, 
and for all 
perturbation $\tilde q_0 \in H^s(\RR^2)$ in Sobolev space $H^s$ with $s>2$ 
such that $q_0 + \tilde q_0$ is compactly supported in $\Acal_H^{+}$,
there exists $T>0$ and a unique classical solution $q\in C^1([0,T)\times\RR^2)$ to \eqref{eq:quasilinear2} 
such that $q(t=0)=q_0 + \tilde q_0$. 

Furthermore, $q - q_0 \in C^0([0,T),H^s)\cap C^1([0,T),H^{s-1})$. 
\end{corollary}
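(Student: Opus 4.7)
The plan is to mirror verbatim the argument of Corollary~\ref{cor:strongsol}, which settled the analogous Cauchy problem for the 3D UCM system \eqref{eq:newucm_spatial_y}. All the heavy lifting has been done in Proposition~\ref{prop:symhyp}, which establishes that \eqref{SVM1} is a symmetric-hyperbolic system of conservation laws on the open convex set $\Acal_H^+$, the function $\tilde E$ serving as strictly convex entropy (and Godunov--Mock symmetrizer) through \eqref{eq:energy_spatialSVM}.

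First I would check that, written in the quasilinear form \eqref{eq:quasilinear2}, the fluxes $F_i$ and the source $B$ are indeed $C^\infty$ on $\Acal_H^+$. The only potentially non-smooth pieces are the stress coefficients $H^{-2}(Y^{cc})^{-3}$ in the $HY^{cc}$ balance, the matrix square-root inverse $\bA^H=(\bY^H)^{-1/2}$ entering the momentum flux, and the tensor $\bZ^H=(\bA^H\bF^H(\bF^H)^T)^{-1}+(\bF^H(\bF^H)^T\bA^H)^{-1}$ entering the $\bY^H$ source; all are smooth under the defining constraints $H>0$, $Y^{cc}>0$, $\bY^H\in S^{++}(\RR^{d\times d})$, and under the algebraic identity $H|\bF^H|=\hat H>0$ (which is preserved in time by \eqref{eq:strain} thanks to the Piola identities \eqref{eq:2Dpiola}, so does not add any further constraint to the Cauchy problem). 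The Sobolev threshold $s>2$ is precisely the usual $s>1+d/2$ with $d=2$, so Theorem~10.1 of \cite[Chapter~10]{benzonigavage-serre-2007} --- or its classical predecessors \cite{Kato1975,majda1984book} --- delivers local existence, uniqueness and the stated regularity $q-q_0\in C^0([0,T),H^s)\cap C^1([0,T),H^{s-1})$ of a strong solution.

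The only genuine technicality, and the one I expect to be the main obstacle, is that symmetric-hyperbolicity holds only on the open set $\Acal_H^+$, not on all of $\RR^{1+2+4+3+1}$. This is handled exactly as for the density constraint $\{\rho>0\}$ in the Euler equations of gas dynamics: because $\Acal_H^+$ is open and convex, and $q_0+\tilde q_0$ takes values in a compact subset of $\Acal_H^+$ by the hypothesis on the support of $\tilde q_0$, a continuity-in-time argument (cf. Theorem~13.1 in \cite[Chapter~13]{benzonigavage-serre-2007}) ensures that on some short interval $[0,T)$ the solution still takes values in $\Acal_H^+$. The system therefore remains symmetric-hyperbolic along the trajectory, and the standard well-posedness statement then yields Corollary~\ref{cor:strongsol2}.
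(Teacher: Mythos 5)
Your proposal matches the paper's argument: the paper proves Corollary~\ref{cor:strongsol2} by literally invoking the proof of Corollary~\ref{cor:strongsol}, which in turn cites the symmetric-hyperbolicity from Proposition~\ref{prop:symhyp} together with Theorem~10.1 (local well-posedness) and Theorem~13.1 (handling the open invariant set, as for $\{\rho>0\}$ in gas dynamics) of \cite[Chapters~10, 13]{benzonigavage-serre-2007}. The extra paragraph you add verifying smoothness of the fluxes and source on $\Acal_H^+$ and the compatibility with $H|\bF^H|=\hat H$ is a welcome clarification, but the route is the same.
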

\begin{proof}The proof is the same as Cor.~\eqref{cor:strongsol} in the general (non shallow) case.\end{proof}

To our knowledge, Cor.~\ref{cor:strongsol2} is the first well-posedness result 
for the Cauchy problem of a 2D viscoelastic Saint-Venant system with Maxwell fluids.
Moreover, note that the 
structure of the 2D viscoelastic Saint-Venant system is similar to the 3D 
full UCM system of Section~\ref{sec:strictlyconvex}.
Then, damping can be similarly expected on large time for $E$, in a similar non-standard way since $E$ is different from $\tilde E$ yielding a convex extension of SVM.
And numerical difficulties with standard discretization 
can also be expected.
However, note \eqref{energy} simplifies to
\beq
\label{energySVM}
E =\frac{H}2\Big( |\bU|^2 + gH + 
\left( \tr \bSigma^H + \Sigma^{zz} - \log(A^{cc}|\bA^H|) \right) 
\Big)
\eeq
here on using the incompressibility condition $H|\bF_h|=\hat H$.

Last, recalling that our full UCM formulation is a viscoelastic extension of polyconvex elastodynamics,
note that our 2D viscoelastic Saint-Venant system 
is obtained from a different reduction procedure than e.g. shell and plate models from (standard) elastodynamics.
It uses non-standard boundary conditions for elastodynamics (i.e. free-surface on top of the layer).
It may thus be interesting to study applications of the non-standard, apparently new, 2D reduction of (standard) elastodynamics 
in the formal limit $\lambda\to\infty$ when $\bA^H,A^{cc}$ is constant.


\subsection{Illustrative flow examples}

To probe the viscoelastic Saint-Venant-Maxwell 
model \eqref{SVM} 
in a 
context, 
it is useful to first imagine simple flows in idealized settings.

For instance, let us look for a 1D shear flow 
$\bU\circ\bPhi_t^H=\partial_t\bPhi_t^H,\bF^H\circ\bPhi_t^H=\grad_H\bPhi_t^H$
where $\bPhi_t^H(\ba)=\ba+X(t,b)\be_a$ is a solution to the Lagrangian description \eqref{eq:SVUCM0detaillag}
for 
$t,a\in\R,b>0$ using 
$X(t,b=0)=\Delta X H(t)$, $\Delta X >0$,
and $\bPhi_t^H(\ba)=\ba$ if $t\le0$. We denoted $H(t)\equiv1_{t>0}$ Heaviside step function.

Such a 1D solution with $|\bF^H|=1=H/\hat H$ has already been considered 
using various 
incompressible viscoelastic flow models, of course.
Assuming $\bA^H(\ba)=\bI$ if $t\le0$, one gets 
%
with $A^{aa}(t,b)=1+|\partial_b X|^2$, $A^{ab}(t,b)=-\partial_b X$, $A^{bb}=1$:
$$
\bA^H = \int_0^t ds\; M'(t-s) A^{\alpha,\beta} \be_a\otimes\be_b \quad M(\tau)=e^{-\tau/\lambda} \,.
$$
When 
$\grad_H z_b=0$ and 
$k=0$,
it naturally leads, for the displacement $X(t,b)$, to the same ``Stokes first problem'' as e.g. in K-BKZ theory
$$
\partial^2_{tt} X(t,b) = \Gcal_\epsilon \partial^2_{bb} X(t,b) + \Gcal_\epsilon \int_0^t ds\; M'(t-s) \partial^2_{bb} X (s,b) \quad t,b>0 \,.
$$
Then, on recalling $\partial^2_{bb} X(t,b)=0=\partial_{t} X(t,b)$ when $t\le0$, we solve
$$
\partial_{t} X(t,b) = \Gcal_\epsilon \int_0^t ds\; M(t-s) \partial^2_{bb} X (s,b) \quad t,b>0
$$
using Laplace transform $\hat X(\omega,b)=\int_0^\infty dt\; e^{-\omega t}X(t,b)$ 
\cite[p.197]{CarslawJaegerBook} and obtain: 
\begin{equation}
\label{eq:sol}
X(t,b=\lambda\sqrt{\Gcal_\epsilon}y) = 
\Delta X \left( e^{-y} 
+ y \int_y^{\frac{t}\lambda} d r e^{-r} \frac{I_1\left(\sqrt{r^2-y^2}\right)}{\sqrt{r^2-y^2}}
 \right) H(t-b/\sqrt{\Gcal_\epsilon}) 
\end{equation}
where $I_1$ denotes the fist-order modified Bessel function of the first kind.

That is, to probe \eqref{SVM} in hydraulics,
one could first try to apply the 1D solution above e.g. to the flow 
generated in a shallow reservoir by sudden longitudinal 
displacements of a flat wall, choosing $\sqrt{\Gcal_\epsilon}>0$ as the front 
speed and 
$\lambda>0$ so that the amplitude 
decays like in Fig.~\ref{fig1} on small times.
But letting alone the 
assumptions about the dynamics, 
the assumed 1D kinematics is a strong limitation for application to real flows.
And the new systems 
proposed in this work should definitely improve the latter limitation~!

\begin{figure}
\centering\includegraphics[scale=.7]{./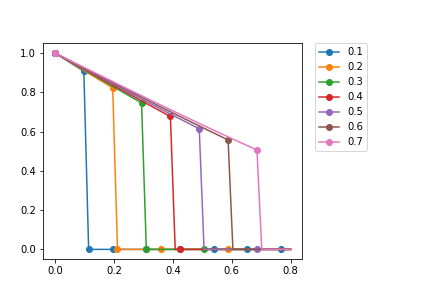} 
\caption{\label{fig1} Solution $X/\Delta X$ of the Stokes first problem given by \eqref{eq:sol} as a function of $y=b/\lambda\sqrt{\Gcal_\epsilon}$ at $t/\lambda \in \{.1,.2\dots.7\}$ using numerical integration.}
\end{figure}

Now, to probe 
\eqref{SVM} in a more realistic multi-dimensional setting,
one may want to first compute \emph{simple} multi-dimensional 
solutions possessing symmetries.
%
For instance, using cylindrical coordinates $(R,\Theta)$ 
and $(r,\theta)$ for both the material and spatial frames, one may want to 
compute supposedly axisymmetric (also called azimuthal or rotational) $r=R$, $\theta=\Theta+\psi(t,R)$ shear waves 
\cite{haddow-erbay-2002}:
\begin{align}
- R |\partial_t \psi|^2 & = \partial_R S^{rR} + (S^{rR}-S^{\theta\Theta}-\kappa S^{\theta R})/R
\\
\partial_{tt}^2 \psi & = R \partial_R S^{\theta R} + 2 S^{\theta R}
\end{align}
with
$\kappa(t,R) := R\partial_R\psi$,
$S^{rR}(t)$ uniform in space, 
and
$$
S^{\theta R} = \kappa - \int_0^t ds e^{\frac{s-t}\lambda} \kappa \,,
\quad
S^{\theta \Theta} = \int_0^t ds e^{\frac{s-t}\lambda} \kappa^2 - \kappa \int_0^t ds e^{\frac{s-t}\lambda} \kappa \,.
$$

But even if 
such axisymmetric solutions exist, they 
do not seem easily constructed anyway.
In practice, it is easier to \emph{numerically simulate} multi-dimensional shear waves with a generic discretization method.
This will be the subject of future specialized works. Recall indeed that standard discretization methods need to be adapted, 
so as to generically simulate SVM on large times with the dissipative inequality as a stability property for the discrete system 
(indeed, the latter inequality does not correspond to the convex extension of the symmetric-hyperbolic system).

\section{Conclusion} 

In this work, we have derived 
new \emph{symmetric hyperbolic} systems of conservation laws 
to model viscoelastic flows with Upper-Convected Maxwell fluids,
either 3D compressible or 2D incompressible with hydrostatic pressure and a free surface.
The 
systems yield the first well-posedness results for causal multi-dimensional viscoelastic motions satisfying the locality principle
(i.e. information 
propagates at finite-speed) as 
small-time smooth solutions to Cauchy initial-value problems.

The systems also suggest a promising route to unify models for solid and fluid motions. 
Like K-BKZ theory for viscoelastic fluids with fading memory, 
they extend standard symmetric-hyperbolic systems 
(polyconvex elastodynamics and Saint-Venant shallow-water systems).
However, they are formulated differently,
with the help of an additional material metric variable.
Now, using the same methodology, 
other viscoelastic models 
with a K-BKZ integro-differential formulation
could in fact be similarly formulated as systems of conservation laws.
Moreover, varying the relaxation limit of the additional material metric variable should yield (symmetric-hyperbolic formulations of) many possible flow models
in between elastic solids and fluids, like elasto-plastic models.
New rheological extensions of the polyconvex elastodynamics and Saint-Venant shallow-water systems will be studied in future works.

To precisey apply our new system, in hydraulics in particular, future works shall also consider numerical simulations.
Note then that standard discretization methods shall first be adapted like e.g. in \cite{boyaval-hal-02262298} to handle large-time motions, 
since the physical energy functional that dissipates is not the strictly convex functional yielding a strictly convex extension.

\end{document}